\documentclass[11pt,reqno]{article}

\usepackage[margin=1.15in]{geometry}
\usepackage[T1]{fontenc}
\usepackage{lmodern}
\usepackage{microtype}

\usepackage{amsmath,amssymb,amsthm,mathtools}
\usepackage{mathrsfs}

\usepackage[shortlabels]{enumitem}
\usepackage{tikz-cd}
\usepackage{xcolor}
\usepackage[
  colorlinks=true,
  linkcolor=blue!55!black,
  citecolor=green!40!black,
  urlcolor=blue!65!black
]{hyperref}
\usepackage[nameinlink,capitalise,noabbrev]{cleveref}

\newtheorem{theorem}{Theorem}[section]
\newtheorem{lemma}[theorem]{Lemma}
\newtheorem{proposition}[theorem]{Proposition}
\newtheorem*{proposition*}{Proposition}
\newcommand{\custompropositiontitle}{}
\newtheorem*{custompropositioninner}{\custompropositiontitle}

\newenvironment{customproposition}[1]
  {\renewcommand{\custompropositiontitle}{Proposition #1}%
   \begin{custompropositioninner}}
  {\end{custompropositioninner}}
\newtheorem{corollary}[theorem]{Corollary}
\newtheorem{conjecture}[theorem]{Conjecture}
\newtheorem*{conjecture*}{Conjecture}
\newtheorem*{maintheorem*}{Main Theorem}

\newtheorem{claim}{Claim}[theorem]

\theoremstyle{definition}
\newtheorem{definition}[theorem]{Definition}
\newtheorem{example}[theorem]{Example}

\newtheorem{fact}[theorem]{Fact}

\theoremstyle{remark}
\newtheorem{remark}[theorem]{Remark}
\newtheorem{notation}[theorem]{Notation}

\DeclareMathOperator{\Diag}{Diag}

\DeclareMathOperator{\gr}{gr}

\newcommand{\LCS}{\mathrm{LCS}}
\newcommand{\N}{\mathbb{N}}

\newcommand{\llangle}{\langle\!\langle}
\newcommand{\rrangle}{\rangle\!\rangle}

\title{Existence of a Model Companion for Groups of Exponent $3$}
\author{Yawara Ishida\thanks{A.I. Systems Research Institute Co., Ltd.}, Ryosuke Mizuno\thanks{A.I. Systems Research Institute Co., Ltd.} and Kota Takeuchi\thanks{Institute of Mathematics, University of Tsukuba. Partially supported by JSPS KAKENHI
Grant Number 25K00213. kota@math.tsukuba.ac.jp}}
\date{September, 2026}

\begin{document}

\maketitle

\begin{abstract}
In this article, we prove that the theory $T_3$ of groups of exponent $3$ has a model
companion. Previous work established the existence of model companions
for theories of groups of fixed finite exponent and nilpotency class
at most $2$ (Saracino--Wood), and for theories of groups of prime exponent $p$ and
nilpotency class at most $c<p$ (Maier). These results do not cover $T_3$,
since groups of exponent $3$ may have nilpotency class $3$.
Our theorem verifies the $n=3$ case of a conjecture proposed by the third author that,
for each integer $n>1$, the theory $T_n$ of groups of exponent $n$
has a model companion if and only if every finitely generated group
of exponent $n$ is finite, or equivalently, if and only if the
Burnside problem has a positive solution for exponent $n$.
\end{abstract}

\medskip
\noindent\textbf{2020 Mathematics Subject Classification.}
Primary 03C60; Secondary 20A15.

\smallskip
\noindent\textbf{Keywords.}
model companion, Burnside problem, groups of exponent $3$, amalgamation property.

\paragraph{Use of generative AI.}
An initial outline of the proof of the main theorem was generated by ChatGPT (OpenAI, GPT-5.6 Sol).
The authors subsequently developed and reconstructed the argument
and checked all proofs in detail.
The authors take full responsibility for the content of this paper.

\section{Introduction}
\label{sec:introduction}
The question of whether a given theory has a model companion has been studied for many classes of groups.
In the abelian case, many standard theories have model companions.
This is known for the theory of all abelian groups, the theory of torsion-free abelian groups, and the theory of abelian groups of any fixed finite exponent.
More generally, Eklof proved that every inductive theory of abelian
groups with the joint embedding property has a model
companion~\cite{Eklof1972}.

The nonabelian case is less uniform. 
The theory of all groups has no model companion \cite[Example~3.5.16]{ChangKeisler1990}.
The same is true for the theory of torsion-free groups; see, for example, \cite{Takeuchi2022}.
Saracino obtained non-existence results for the theories of solvable groups of fixed derived length at least $2$ \cite{Saracino1974}. 
He also proved that, for every $c\geq2$, neither the theory of nilpotent groups of class at most $c$ nor its torsion-free version has a model companion~\cite{Saracino1976}. 
Thus, standard conditions such as solvability or a fixed bound on nilpotency class are not enough to ensure the existence of a model companion.

Finite exponent changes the answer for some nilpotent classes.
Saracino and Wood proved that the theory of groups of fixed finite exponent and nilpotency class at most $2$ has a model companion~\cite{SaracinoWood1979}.
For prime exponent $p$ and nilpotency class at most $c<p$, Maier proved that the theory has an
$\aleph_0$-categorical model companion~\cite{Maier1989}; see also
\cite[Corollary~4.42]{dElbeeMuellerRamseySiniora2025}.

There are also positive results of a different type.
For example, recent work gives model-complete pure group theories for some semisimple algebraic groups and for Heisenberg groups over model-complete fields~\cite{HoffmannKowalskiTranYe2023,FracekKowalski2025}.
These results show that noncommutativity itself is not an obstruction.
However, they concern complete theories of particular groups.
They do not give a criterion for broad classes of groups defined by simple group-theoretic axioms.
In contrast with the abelian case, no general result is known that describes the boundary between existence and non-existence for such nonabelian classes.

One way to study this boundary is to consider universal theories, and in particular varieties of groups.
The results above suggest that finite exponent is an important condition to examine.
For an integer $n>1$, let $\mathcal V_n$ be the variety of groups satisfying $x^n=1$, and let $T_n$ be its theory in the usual language of groups.
The axioms defining $T_n$ impose a fixed exponent identity but no separate bound on nilpotency class or derived length.
It is therefore a natural test case for the model-companion problem for nonabelian group varieties.

We prove the following.

\begin{maintheorem*}
The theory $T_3$ has a model companion.
\end{maintheorem*}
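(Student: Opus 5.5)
We will show that the class of existentially closed models of $T_3$ is elementary. The key input is that $\mathcal V_3$ is locally finite with a uniform bound: a group of exponent $3$ generated by $k$ elements has order at most $3^{N(k)}$, where $N(k)=k+\binom k2+\binom k3$, and it is a quotient of the finite relatively free group $F_k=F_k(\mathcal V_3)$. Nilpotency class at most $3$ enters through this bound.

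The first step reduces existential formulas over a tuple $\bar a$ in a model $G$ to finitely many cases. Since every finitely generated subgroup is finite of bounded size, an existential formula $\exists\bar y\,\varphi(\bar a,\bar y)$ with $|\bar y|=m$ holds in an extension $H\supseteq G$ iff there is a finite group $K\in\mathcal V_3$ with $\langle \bar a\rangle\le K$, $K$ generated by $\bar a\bar y$, $|K|\le 3^{N(k+m)}$, such that $\varphi$ holds in $K$, and such that $K$ amalgamates with $G$ over $\langle\bar a\rangle$ inside $\mathcal V_3$. Up to isomorphism there are finitely many such candidate $K$. The first part is therefore quantifier-free expressible about $\bar a$: the isomorphism type of $\langle\bar a\rangle$ is determined by which words of $F_k$ vanish at $\bar a$. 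The real issue is the amalgamation condition.

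The second step is the main obstacle: to decide, first-order and uniformly in $G$, when $K\supseteq A=\langle\bar a\rangle$ can be amalgamated with $G$ over $A$ in $\mathcal V_3$. $\mathcal V_3$ lacks the amalgamation property, so this condition is not automatic. My plan is to show that the obstruction is governed by finitely many ``local'' data in $G$, namely the finite subgroups $\langle \bar a,\bar g\rangle$ for $\bar g$ ranging over tuples of a bounded length $\ell$ depending only on $k,m$. Concretely, I would prove:
\[
K \text{ and } G \text{ amalgamate over } A \iff \text{for all } \bar g\in G^{\ell},\ K \text{ and } \langle A,\bar g\rangle \text{ amalgamate over } A.
\]
The direction $\Leftarrow$ needs compactness: amalgamation with all finitely generated subgroups of $G$ gives amalgamation with $G$, by an ultraproduct or a consistency argument in the diagram language. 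The difficulty is bounding $\ell$. To do this I would analyze the free product with amalgamation $G*_A K$ modulo the verbal subgroup for $x^3$. Using the exponent-$3$ laws — $[x,y,y]=1$, the class-$3$ bound, and the fact that $[x,y,z]$ is alternating and central — I would show that the kernel of $G\to (G*_AK)/V_3$ is generated by elements expressible through commutators of weight at most $3$. Hence any failure of embedding is witnessed by at most three elements of $G$ together with a generating set of $A$, which gives $\ell=3$ or something similar. This is the step I expect to require the most work. I would try first a Lazard-correspondence style linearization: at prime $3$, class $3$ is not $<p$, so I would instead use the explicit structure of $F_k$ as an extension of a class-$2$ group by the central alternating trilinear part, and describe amalgams via cocycles.

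Given this, ``$K$ amalgamates with $G$ over $A$'' becomes a universal formula in $\bar a$, taking a finite disjunction over the isomorphism types of $\langle\bar a,\bar g\rangle$. The candidate theory $T_3^*$ is then $T_3$ together with, for each $k,m$ and each quantifier-free $\varphi$, the axiom
\[
\forall\bar a\,\bigl(\Theta_{\varphi}(\bar a)\rightarrow \exists\bar y\,\varphi(\bar a,\bar y)\bigr),
\]
where $\Theta_\varphi$ is the first-order condition from steps one and two. Existentially closed models satisfy $T_3^*$ by the equivalence above. Conversely, a model of $T_3^*$ is existentially closed, because any existential formula true in an extension yields such a $K$ and hence $\Theta_\varphi$. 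Since $T_3$ is universal, hence inductive, this class being elementary gives the model companion.
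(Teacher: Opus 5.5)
Your overall architecture is sound. It is essentially the standard criterion for a locally finite $\Pi_2$ theory, and only the implication ``$K$ amalgamates with every $\langle A,\bar g\rangle$, $\bar g\in G^\ell$ $\Rightarrow$ $K$ amalgamates with $G$'' is nontrivial. Your argument that existentially closed models satisfy $T_3^*$ needs it only for existentially closed $G$.

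\textbf{The gap is in step two.} You propose to prove this implication for \emph{every} $G\models T_3$, by commutator calculus in $(G*_AK)/V_3$ that never uses existential closedness. In that generality it is false. Take $G=F_{\mathcal V_3}(x_i\mid i\in\N)$, $a_n=\prod_{i<n}[x_{2i},x_{2i+1}]$ and $A=\langle a_n\rangle$ (so $k=1$). Let $K=F_{\mathcal V_3}(s,y,z)$ with $s$ identified with $a_n$ (so $m=2$).
\begin{itemize}
\item[(i)] Since $a_n\in\gamma_2(G)$ and groups of exponent $3$ have class at most $3$, any amalgam would force $[s,y,z]=[a_n,y,z]=1$. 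So $G$ and $K$ do not amalgamate over $A$.
\item[(ii)] Now let $A\le D\le G$ with $d(D)<2n$. Then $a_n\notin\gamma_2(D)$: if $\gr_2(a_n)=\sum_{i<n}x_{2i}\wedge x_{2i+1}$ lay in $\Lambda^2U$ with $U=\gr_1^G(D)$, contracting with coordinate functionals would give $\dim U\ge 2n$.
\item[(iii)] Hence a linear functional on $\gr_1(D)$ gives a retraction $D\to A$, and $D\times K$ is an amalgam.
\end{itemize}
So no $\ell$ depending only on $(k,m)$ works once $2n>\ell+1$; this is exactly the example in the paper's last section. Note also that here $G$ does embed in $(G*_AK)/V_3$, via the retraction $s\mapsto a_n$, $y,z\mapsto 1$. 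The failure is entirely on the $K$ side, which your analysis of the kernel of $G\to(G*_AK)/V_3$ does not address.

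\textbf{Why the heuristic breaks.} Your ``boundedly many elements of $G$ suffice'' idea fails for the following reason. A relation in $(G*_AK)/V_3$ involving only $C=\langle A,\bar g\rangle$ and $K$ holds in the \emph{subgroup} they generate. That subgroup is in general a proper quotient of $(C*_AK)/V_3$, because $C*_{\mathcal V_3}K\to G*_{\mathcal V_3}K$ need not be injective. Relations of $G$, such as ``$a_n$ is a product of $n$ commutators'', are invisible in $C$ but kill $[a_n,y,z]$ in $G*_{\mathcal V_3}K$.

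The paper's bounded-support lemma is precisely your heuristic: every element of $\llangle\Delta\rrangle$ is a product of at most $3|\Delta|$ conjugates, and conjugators matter only modulo $\gamma_3$. But the paper combines it with the idea you are missing.
\begin{itemize}
\item[(a)] For existentially closed $G$, every $C\le G$ with $d(C)\le n$ lies in some $D\le_{\LCS}G$ with $d(D)\le 15n^2$. To build $D$, adjoin commutator roots and triple-commutator roots in an extension. Then pass to $D_2*_{\mathcal V_3}F_2$, whose lower and upper central series coincide in reverse order. Finally pull a copy back into $G$ using existential closedness.
\item[(b)] The condition $D\le_{\LCS}G$ makes $\gr(D*_{\mathcal V_3}K)\to\gr(G*_{\mathcal V_3}K)$ injective, hence $D*_{\mathcal V_3}K\to G*_{\mathcal V_3}K$ injective. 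The non-amalgamation witness therefore genuinely transfers to $D$.
\end{itemize}
Any correct version of your step two has to use existential closedness in some such way.
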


The case $n=2$ is abelian.
Thus, $n=3$ is the first nonabelian case in this family.
Groups of exponent $3$ are nilpotent of class at most $3$, and this bound is attained~\cite{LeviVanDerWaerden1933}.
Hence our result concerns the whole variety $\mathcal V_3$, including its groups of nilpotency class $3$.
It is not covered by the class-$2$ result of Saracino and Wood or by the results for prime exponent $p$ and class $c<p$.

The family $(T_n)_{n>1}$ is also related to the Burnside problem.
The possible connection between the model-companion problem for groups of fixed exponent and the Burnside problem was already noted by the third author in~\cite{Takeuchi2022}.
The following conjecture, proposed by the third author, gives a precise formulation of this connection.

\begin{conjecture}\label{conj:burnside-model-companion}
For every integer $n>1$, the theory $T_n$ has a model companion if and only if the Burnside problem has a positive solution for exponent $n$.
\end{conjecture}

The condition on the right says that $\mathcal V_n$ is locally finite, or equivalently, every finitely generated group satisfying $x^n=1$ is finite.
The conjecture holds for $n=2$ by the theory of vector spaces over $\mathbb F_2$.
The Burnside problem also has a positive solution for exponent $3$, and the main theorem proves the other side of the conjecture in this case.
We have also obtained a proof that $T_p$ has no model companion for
every sufficiently large prime $p$.
Since the Burnside problem has a negative solution for every
sufficiently large prime $p$ by the Novikov--Adian theorem, and in fact for every odd exponent at least
$665$
\cite{NovikovAdian1968,Adian1979}, this proves the conjecture for all
sufficiently large primes.
The proof will appear elsewhere.

We briefly explain the main idea of the proof.
The variety $\mathcal V_3$ is locally finite, but it does not have the amalgamation property.
We therefore study finite witnesses to failure of amalgamation.
Let $M$ be an existentially closed model of $T_3$, let $A\leq M$, and let $A\leq B\in\mathcal V_3$ be finite.
If $M$ and $B$ have no amalgam over $A$, we show that the same failure already occurs between $B$ and a finite subgroup $D$ of $M$ containing $A$, where the number of generators of $D$ is bounded in terms of the number of generators of $B$.
In other words, the failure of amalgamation in the coproduct $M*_{\mathcal V_3}B$ can be reproduced in a coproduct involving only boundedly many generators.
By Fact~\ref{fact:locally finiteness and model companion}, this uniform bound gives the model companion.

For this purpose, we prove a finite strict-envelope theorem.
Every finite subgroup $C$ of an existentially closed model $M$ is contained in a finite subgroup $D\leq M$ that is strict with respect to the lower central series, with a quadratic bound on the number of generators of $D$.
This strictness implies that the natural map 
\[ D*_{\mathcal V_3}B\longrightarrow M*_{\mathcal V_3}B \]
is injective.
Hence a witness to the failure of amalgamation that is found in $M*_{\mathcal V_3}B$ and involves
only elements of $D$ and $B$ is already a witness in $D*_{\mathcal V_3}B$.
We also prove a structure theorem for existentially closed models of $T_3$: their lower and upper central series coincide in reverse order, and every element of the second and third terms of the lower central series is a commutator or a triple commutator, respectively.
The main tool for these results is the associated graded Lie algebra.
We compute the graded Lie algebras of coproducts in $\mathcal V_3$ and use these computations to construct the required extensions.

Section~\ref{sec:preliminaries} contains the preliminaries.
Section~\ref{sec:main-results} proves the main theorem from the bounded-witness and strict-envelope results.
Section~\ref{sec:structure} proves the structural results used in the argument.
Section~\ref{sec:examples} gives examples showing that lower-central strictness and existential closedness are needed.

We have also fully formalized all the results in this paper in Lean 4.
The formalization depends only on the three standard axioms
(\texttt{propext}, \texttt{Classical.choice}, and \texttt{Quot.sound}),
with no \texttt{sorryAx} or project-specific axioms.
The formalization will be released on GitHub soon.

\section{Preliminaries}
\label{sec:preliminaries}

Throughout, groups are regarded as structures in the language
$\mathcal{L}_{\mathrm{grp}}=\{\cdot,{}^{-1},1\}$.  Let
$T_{\mathrm{grp}}$ denote the theory of groups.  For an integer $n>1$, set
\[
T_n \coloneqq T_{\mathrm{grp}} \cup \{\forall x\,(x^n=1)\}.
\]
Thus, by a group of exponent $n$, we mean a group whose exponent divides $n$.

We fix the following notation for group elements:
\begin{notation}
    \begin{enumerate}
        \item $[a,b]=aba^{-1}b^{-1}$
        \item $[a_1,\ldots,a_k]=[[a_1,\ldots,a_{k-1}],a_k]$
        \item $a^b=b^{-1}ab$
        \item $\mathcal V_3$ is the variety of groups satisfying $x^3=1$; equivalently, $\mathcal V_3=\{M\mid M\models T_3\}$.
        \item $G*H$ is the free product of groups $G$ and $H$.
        \item For a subset $S$ of a group $G$, $\llangle S\rrangle_G$ denotes
        the normal closure of $S$ in $G$.  We omit the subscript when the
        ambient group is clear.
        \item $d(G)$ is the minimum number of generators of $G$. 
        \item $G*_{\mathcal V_3}H$ is the coproduct of $G$ and $H$ in the
        variety $\mathcal V_3$.  Equivalently,
        \[
        G*_{\mathcal V_3}H
        = (G*H)/\llangle (G*H)^3\rrangle_{G*H}.
        \]
        \item For a set $X$, $F_{\mathcal V_3}(X)$ is the free group in the
        variety $\mathcal V_3$ generated by $X$.  Equivalently, if $F(X)$ is
        the free group on $X$, then
        \[
        F_{\mathcal V_3}(X)
        =F(X)/\llangle F(X)^3\rrangle_{F(X)}.
        \]
        If $|X|=r\in\mathbb N$, then $F_{\mathcal V_3}(X)$ is called the
        Burnside group $B(r,3)$.
    \end{enumerate}
\end{notation}

\subsection{Model-theoretic background}

Let $T$ and $T'$ be $L$-theories.
\begin{definition}
    \begin{enumerate}
        \item $T_0$ and $T_1$ are called companions if for $i=0,1$, for every $M_i\models T_i$ there is $M_{1-i}\models T_{1-i}$ such that $M_i\subset M_{1-i}$.
        \item $T$ is said to be model complete if for every formula $\theta(\bar x)$ there is an existential formula $\psi(\bar x)$ such that $T\models \forall \bar x(\theta(\bar x)\leftrightarrow \psi(\bar x))$.
        \item $T^*$ is called a model companion of $T$ if $T^*$ is a companion of $T$ and is model complete.
        \item $M\models T$ is an existentially closed model (e.c. model) of $T$ if for any $\bar a\in M$ and any extension $M\subset N\models T$, $N\models \exists \bar x\varphi(\bar x,\bar a)$ implies $M\models \exists \bar x\varphi(\bar x,\bar a)$ where $\varphi(\bar x,\bar y)$ is quantifier-free.
        \item $T$ is a $\Pi_2$-theory if there is a set $T'$ of $\forall\exists$-sentences such that $T\equiv T'$.
        \item Let $M\models T$ and let $A\subset M$ be a finite subset. Set
        \[
            \Diag_A(\bar x)=\{\theta(\bar x)\mid M\models\theta(A),
            \ \theta(\bar x)\text{ is quantifier-free}\},
        \]
        where $|\bar x|=|A|$. If $L$ is finite and $T$ is locally finite,
        then $\Diag_A(\bar x)$ is finite modulo $T$.
    \end{enumerate}
\end{definition}

\begin{fact}
    Let $T$ be a $\Pi_2$-theory.
    Then for an $L$-theory $T^*$, $T^*$ is a model companion of $T$ if and only if 
    \[
        \{M\mid M\models T^*\}=\{M\models T\mid M\text{ is an e.c. model of $T$}\}.
    \]
\end{fact}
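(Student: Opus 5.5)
We prove the two implications separately. The link between them is the standard fact that, for a $\Pi_2$-theory $T$, the class of e.c. models of $T$ is nonempty and cofinal: every model of $T$ embeds into an e.c. model of $T$. To see this, build a chain of length $\omega$ in which each stage realizes, over the previous stage, every quantifier-free existential formula with parameters that is realizable in some extension. The union of such a chain is e.c. It is still a model of $T$, because $\forall\exists$-sentences are preserved under unions of chains.

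($\Rightarrow$) Assume $T^*$ is a model companion of $T$.
\begin{enumerate}[(i)]
\item Let $M\models T^*$. By companionship, $M\subseteq N$ for some $N\models T$, and $N\subseteq M'$ for some $M'\models T^*$. Since $T^*$ is model complete, $M\preceq M'$. Given a quantifier-free $\varphi(\bar x,\bar a)$ with $\bar a\in M$ and a witness in some extension $N''\models T$, embed $N''$ into some $M''\models T^*$ containing $M$. Model completeness gives $M\preceq M''$, so the formula $\exists\bar x\,\varphi(\bar x,\bar a)$, true in $M''$, is true in $M$. Also $M\models T$, because $M\preceq M'\supseteq N$, $N\models T$, and $T$ is $\Pi_2$. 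Hence $M$ is an e.c. model of $T$.
\item Conversely, let $M$ be an e.c. model of $T$. Embed $M\subseteq M^*\models T^*$ and then $M^*\subseteq N\models T$. Then $M$ is existentially closed in $M^*$.
\item By model completeness of $T^*$, every formula is $T^*$-equivalent both to an existential formula and to a universal one. A standard back-and-forth argument then shows $M\preceq M^*$: existential formulas with parameters in $M$ transfer down from $M^*$ to $M$ because $M$ is e.c. Universal formulas true in $M^*$ are true in the substructure $M$ by preservation under substructures. So every formula transfers between $M$ and $M^*$ in both directions.
\item Since $M\equiv M^*\models T^*$, we get $M\models T^*$.
\end{enumerate}

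($\Leftarrow$) Assume the models of $T^*$ are exactly the e.c. models of $T$.
\begin{enumerate}[(i)]
\item \emph{Companionship.} Every model of $T$ embeds into an e.c. model, which is a model of $T^*$. Every model of $T^*$ is an e.c. model of $T$, hence in particular a model of $T$.
\item \emph{Model completeness.} We use Robinson's test: it suffices to show that if $M\subseteq N$ are both models of $T^*$, then $M$ is existentially closed in $N$. This holds because $M$ is an e.c. model of $T$ and $N\models T$.
\end{enumerate}

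The main obstacle is the step in ($\Rightarrow$) showing that e.c. models satisfy $T^*$, i.e.\ item (iii). There the precise interplay between existential closedness and the existential/universal equivalents supplied by model completeness must be handled carefully. We use the standard characterization that $T^*$ is model complete if and only if every embedding between its models is elementary, if and only if Robinson's test holds. The whole argument is textbook material, as in Chang--Keisler, and in the paper it would simply be cited.
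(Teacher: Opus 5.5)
Outside step (iii) of ($\Rightarrow$) your argument is correct. That covers the sandwich argument in (i) showing $M\models T$, the observation in (ii) that $M$ is e.c. in $M^*$, and the Robinson-test proof of ($\Leftarrow$). The paper gives no proof of this statement; it only quotes it as a standard fact.

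The gap is the last sentence of (iii). You show two things: existential and universal formulas with parameters in $M$ have the same truth value in $M$ and in $M^*$, and every $\theta$ is $T^*$-equivalent to such a formula $\psi$. But $\theta\leftrightarrow\psi$ is only known to hold in models of $T^*$, i.e.\ in $M^*$. Using it inside $M$ presupposes $M\models T^*$, which is what you are proving. What your transfer actually yields is $M\models\psi(\bar a)\iff M^*\models\theta(\bar a)$. That says nothing about $\theta(\bar a)$ in $M$, so as written the step is circular.

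The claim $M\preceq M^*$ is still true, and either of two short arguments gives it.

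(a) Induct on $\theta$ (built from atomic formulas by $\neg,\wedge,\exists$) to show $M\models\theta(\bar a)\iff M^*\models\theta(\bar a)$ for $\bar a\in M$. The only nontrivial case is $\theta=\exists y\,\theta_0(\bar x,y)$ with $M^*\models\theta(\bar a)$. Take $\chi$ quantifier-free with $T^*\models\forall\bar x\,\forall y\,(\theta_0\leftrightarrow\exists\bar z\,\chi)$. Then $M^*\models\exists y\,\exists\bar z\,\chi(\bar a,y,\bar z)$, so e.c.-ness gives $b,\bar c\in M$ with $M\models\chi(\bar a,b,\bar c)$. Hence $M^*\models\theta_0(\bar a,b)$, with the equivalence used only in $M^*$. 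The induction hypothesis then gives $M\models\theta_0(\bar a,b)$.

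(b) More simply, $T^*$ is inductive, since a chain of its models is an elementary chain, and is therefore $\forall\exists$-axiomatizable. A $\forall\exists$ sentence true in $M^*$ is true in $M$ because $M$ is e.c. in $M^*$. This gives $M\models T^*$ directly.

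A smaller point concerns your construction of e.c. extensions. $M_{i+1}$ must realize every existential formula over $M_i$ that is realizable in some model of $T$ extending $M_{i+1}$. This is obtained by a transfinite chain inside each step, with unions at limits, using that $T$ is $\Pi_2$. Requiring realizability in some extension of $M_i$ instead is impossible in general. Over $\langle a\rangle$ cyclic of order $3$, the formulas $\exists u\,\exists v\,(a=[u,v])$ and $\exists x\,\exists y\,([a,x,y]\neq 1)$ are each realizable in some extension in $\mathcal V_3$. They are never jointly realizable, since $\gamma_2\leq Z_2$.
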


\begin{definition}
    Let $T$ be an $L$-theory.
    We say $T$ is locally finite if, for every $M\models T$ and every finite subset $A\subset M$, the substructure $\langle A\rangle \subset M$ generated by $A$ is finite.
\end{definition}

The next fact follows from a standard compactness argument.
\begin{fact}
    Let $L$ be finite and let $T$ be a locally finite $L$-theory.
    Then for any $n\in\mathbb N$, there is $m\in\mathbb N$ such that if $A\subset M\models T$ is a subset with $|A|\leq n$, then $|\langle A\rangle| \leq m$.
\end{fact}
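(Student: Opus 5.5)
We argue by compactness, by contradiction. Fix $n\in\mathbb N$ and suppose no bound $m$ exists. Then for every $k\in\mathbb N$ there are a model $M_k\models T$ and a subset $A_k\subset M_k$ with $|A_k|\le n$ and $|\langle A_k\rangle|>k$. The goal is to assemble these into a single model of $T$ containing an $n$-generated substructure that is infinite, which contradicts local finiteness.

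First, expand $L$ by new constants $c_1,\dots,c_n$. For each $k$, let $\sigma_k$ be the sentence stating that there are at least $k+1$ distinct elements each equal to some $L$-term in $c_1,\dots,c_n$. Explicitly,
\[
\sigma_k\;=\;\bigvee_{t_0,\dots,t_k}\ \bigwedge_{i<j\le k} t_i(\bar c)\neq t_j(\bar c).
\]
The disjunction in $\sigma_k$ is infinite, so we instead use, for each $k$, the set of term-tuples up to complexity $N$. Because $\langle A_k\rangle$ is the union of the sets of values of terms of complexity $\le N$, and $L$ is finite, each such set is finite. Hence $|\langle A_k\rangle|>k$ already holds at some finite complexity $N_k$, and so $\sigma_k$ can be taken to be a finite disjunction over terms of complexity $\le N_k$.

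Next, consider $\Sigma=T\cup\{\sigma_k: k\in\mathbb N\}$ in the expanded language. If $|A_k|<n$, we repeat elements so that the constants $\bar c$ can always be interpreted. Interpreting $\bar c$ as $A_k$ makes $M_k$ a model of $T\cup\{\sigma_j: j\le k\}$, since the $\sigma_j$ are monotone in $j$. So $\Sigma$ is finitely satisfiable, and by compactness it has a model $N$. In $N$ the substructure $\langle c_1^N,\dots,c_n^N\rangle$ contains more than $k$ elements for every $k$, so it is infinite. But the $L$-reduct of $N$ is a model of $T$, which contradicts local finiteness of $T$.

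The only delicate point is that "$|\langle A\rangle|>k$" must be first-order expressible. Finiteness of $L$ is exactly what makes this possible, since it gives finitely many terms of each complexity and hence lets us take the finite disjunction above. This also yields the bound $m$: it is the least $k$ such that $\sigma_k$ fails in all models of $T$.
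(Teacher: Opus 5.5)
Your overall plan (compactness for $T$ with new constants $\bar c$ and sentences forcing $\langle\bar c\rangle$ to be large) is the standard compactness argument the paper alludes to. However, the finite-satisfiability step fails as written.

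You choose $N_k$ from the particular counterexample $(M_k,A_k)$. So your $\sigma_k$ says ``some $k+1$ terms of complexity $\le N_k$ take distinct values'', which is in general stronger than $|\langle\bar c\rangle|>k$. These sentences are not monotone in $k$. Knowing that $k+1$ distinct values occur among terms of complexity $\le N_k$ tells you nothing about how many occur among terms of complexity $\le N_j$ when $N_j<N_k$. So $M_k$ need not satisfy $\sigma_j$ for $j<k$, and you have no single model of a given finite part of $\Sigma$. Choosing $N_k$ nondecreasing does not help, since you would need $N_j\ge N_k$ for $j\le k$. For the same reason your closing description of $m$ is not meaningful: these $\sigma_k$ are only defined under the contradiction hypothesis.

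What is missing is a complexity bound depending only on $k$. Let $S_N$ be the set of values of terms in $\bar c$ of nesting depth at most $N$. Then $S_N\subseteq S_{N+1}$. If $S_{N+1}=S_N$, then $S_N$ is closed under every function symbol, so $S_N=\langle\bar c\rangle$ and the chain is constant from then on. Hence if $|\langle\bar c\rangle|>k$, one of two things happens:
\begin{itemize}
\item $S_0\subsetneq S_1\subsetneq\cdots\subsetneq S_k$, giving $|S_k|\ge|S_0|+k\ge k+1$ (as $c_1\in S_0$); or
\item the chain stabilized before stage $k$, and $|S_k|=|\langle\bar c\rangle|>k$.
\end{itemize}
Thus with $N_k=k$ for every $k$, $\sigma_k$ is equivalent in every structure to $|\langle\bar c\rangle|>k$. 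It is then automatically monotone and true in $(M_k,A_k)$, and the rest of your argument goes through.

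Alternatively, let $\chi_N$ say that $S_N$ is closed under the function symbols. Local finiteness makes $T\cup\{\neg\chi_N:N\in\mathbb N\}$ inconsistent. Compactness together with $\chi_N\to\chi_{N+1}$ then gives a single $N$ with $T\models\chi_N$, and $m$ can be taken to be the number of terms of depth $\le N$.
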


\begin{fact}\label{fact:locally finiteness and model companion}
    Let $L$ be finite and let $T$ be a locally finite $L$-theory.
    Suppose that $T$ is a $\Pi_2$-theory.
    Then the following are equivalent.
    \begin{enumerate}
        \item $T$ has a model companion $T^*$.
        \item For any finite substructures $A\subset B$ of a model $M_0\models T$, there is $n_{A,B}\in \mathbb N$ satisfying the following: 
        Let $M$ be an e.c. model of $T$ and let $A\subset M$. 
        If $M$ and $B$ do not have an amalgam over $A$ in $T$, then there is $A\subset C\subset M$ such that
        \begin{enumerate}
            \item $C$ is generated by at most $n_{A,B}$ elements as an $L$-substructure,
            \item $C$ and $B$ do not have an amalgam over $A$ in $T$.
        \end{enumerate}   
    \end{enumerate}
\end{fact}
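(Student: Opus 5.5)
The plan is to prove both directions through the standard characterization: a $\Pi_2$-theory has a model companion iff the class of e.c. models is elementary, i.e.\ axiomatizable. Local finiteness and finiteness of $L$ turn the relevant conditions into first-order sentences. Throughout I use the uniform bound from the previous Fact: for each $n$ there is an $m$ with $|\langle A\rangle|\le m$ whenever $|A|\le n$. I also use that each $\Diag_A(\bar x)$ is finite modulo $T$, hence equivalent to a single quantifier-free formula $\delta_A(\bar x)$.

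\textbf{(2)$\Rightarrow$(1).} First note that only finitely many isomorphism types of pairs $A\subset B$ with $B$ generated by $k$ elements exist, for each fixed $k$. For each such pair, with $A=\langle\bar a\rangle$ and $B=\langle \bar a\bar b\rangle$, I write the sentence
\[
\forall \bar x\Bigl(\delta_A(\bar x)\wedge \neg\Phi_{A,B}(\bar x)\rightarrow \exists \bar y\,\delta_B(\bar x,\bar y)\Bigr).
\]
Here $\Phi_{A,B}(\bar x)$ says there exist $\le n_{A,B}$ elements $\bar z$ such that $C=\langle\bar x\bar z\rangle$ has no amalgam with $B$ over $A$. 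This condition is expressible. By uniform local finiteness, $C$ has bounded size, so one existentially quantifies all its elements together with its (finite) diagram, and disjoins over the finitely many isomorphism types $(A\subset C)$ that fail to amalgamate with $B$. Let $T^*=T\cup\{$these sentences$\}$.

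Next I check that every e.c.\ model satisfies $T^*$. Suppose $M$ is e.c., $\delta_A(\bar a)$ holds, and $\Phi_{A,B}(\bar a)$ fails. By (2) (contrapositive), $M$ and $B$ then have an amalgam $N\models T$ over $A$. So $N\models\exists\bar y\,\delta_B(\bar a,\bar y)$, and e.c.-ness pulls this down to $M$.

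Conversely, let $M\models T^*$ and $M\subset N\models T$, with $N\models\exists\bar y\,\varphi(\bar a,\bar y)$. Take witnesses $\bar b$ and set $B=\langle \bar a\bar b\rangle\le N$, which is finite. Here $\Phi_{A,B}(\bar a)$ fails in $M$: any $C\le M$ containing $A$ amalgamates with $B$ over $A$ inside $N$ itself. So the axiom gives a copy of $B$ over $A$ in $M$, and $\varphi$ holds there. Thus $M$ is e.c. By the preceding Fact, $T^*$ is the model companion.

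\textbf{(1)$\Rightarrow$(2).} Suppose (2) fails for some $A\subset B$. Then for each $n$ there is an e.c.\ $M_n\supseteq A$ failing to amalgamate with $B$ over $A$, while every $C$ with $A\subset C\le M_n$ generated by $\le n$ elements does amalgamate. Let $M$ be a nonprincipal ultraproduct of the $M_n$. Since the e.c.\ models are exactly $\mathrm{Mod}(T^*)$, $M$ is e.c.

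For each fixed $n$, the statement ``every $n$-generated $C\supseteq A$ amalgamates with $B$'' is first-order in $A$: it says that each such $C$ belongs to one of finitely many isomorphism types, each of which amalgamates. So it holds in $M$ for all $n$, and every finitely generated $C\le M$ containing $A$ amalgamates with $B$. Compactness (via the diagrams of $M$ and $B$, using that $T$ is universal-enough, or simply that amalgamation of $M$ with $B$ over $A$ is a union of finite conditions) then gives an amalgam of $M$ with $B$.

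Being e.c., $M$ has a copy of $B$ over $A$. This is witnessed by an existential formula $\exists\bar y\,\delta_B(\bar a,\bar y)$, which transfers by \L o\'s to $M_n$ for almost all $n$, giving an amalgam $M_n$ itself, a contradiction.

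\textbf{Main obstacle.} The delicate steps are the expressibility of ``$C$ and $B$ have no amalgam over $A$'' and the compactness step. Both rely on finiteness of $L$ together with the uniform bound from local finiteness, and I would verify them carefully before anything else.
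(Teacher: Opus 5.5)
Your proof is correct. For (2)$\Rightarrow$(1) you follow the paper. Your $\Phi_{A,B}$ is the paper's disjunction of the formulas $\exists\bar z\,\theta_{C,A}(\bar z,\bar y)$ over the finitely many non-amalgamating types $C\in K(A,B)$, and both verifications (e.c.\ models satisfy $T^*$; models of $T^*$ are e.c.) run the same way.

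For (1)$\Rightarrow$(2) your route is genuinely different. The paper argues directly from model completeness. The universal formula $\neg\exists\bar x\,\theta_{B,A}(\bar x,\bar y)$ is equivalent modulo $T^*$ to an existential formula $\exists\bar z\,\psi(\bar z,\bar y)$. The witnesses of $\psi$ in $M$, together with $A$, generate the required $C$, so $n_{A,B}=|\bar z|+|A|$ is read off from $T^*$. Then $C$ cannot amalgamate with $B$: any amalgam extends to a model of $T^*$ in which $\psi(\bar c,A)$ persists while a copy of $B$ is present.

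You instead argue by contradiction:
\begin{itemize}
\item A nonprincipal ultraproduct of counterexamples is e.c., because $\mathrm{Mod}(T^*)$ is closed under ultraproducts.
\item Local finiteness makes ``every $n$-generated extension of $A$ amalgamates with $B$'' first-order, so it passes to the ultraproduct.
\item Compactness turns this into an amalgam of $M$ with $B$.
\item \L{}o\'s pulls the resulting copy of $B$ back into some $M_n$.
\end{itemize}

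Your argument needs only that the e.c.\ models form an elementary class, not the existential-equivalence form of model completeness. In exchange it gives no explicit bound and uses local finiteness a second time. The paper's argument is shorter and constructive.

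Your compactness step needs no extra hypothesis on $T$, so the ``universal-enough'' hedge can be dropped. Identify the constants for $A$. Then each finite part of $T\cup\Diag(M)\cup\Diag(B)$ is satisfied in an amalgam with $B$ of some finitely generated $C\le M$ containing $A$.
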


\begin{proof}
    For given finite $A\subset B$, let $\theta_{B,A}(\bar x,A)$ be a quantifier-free $L(A)$-formula expressing $\mathrm{Diag}_{AB}(\bar x,\bar y)$; in other words, $\theta_{B,A}(B',A')$ holds if and only if $A\cong A'$ and $B \cong_A B'$ after identifying $A'$ with $A$. Since $L$ is finite, we can find such a formula $\theta_{B,A}(\bar x,A)$.

    ($1\Rightarrow 2$)
    Since $T^*$ is a model companion, $T^*\models \forall \bar y (\neg\exists \bar x\theta_{B,A}(\bar x,\bar y) \leftrightarrow  \exists \bar z \psi(\bar z, \bar y))$ for some quantifier-free $L$-formula $\psi(\bar z,\bar y)$.
    Let $n_{A,B}=|\bar z|+|A|$.

    Suppose $M$ is an e.c. model of $T$ and $A\subset M$.
    If $M$ and $B$ do not have an amalgam over $A$ in $T$, then $M\models \neg \exists \bar x\theta_{B,A}(\bar x,A)$.
    Since $M\models T^*$, $M\models \exists \bar z\psi(\bar z,A)$.
    Let $\bar c\in M$ be a realization of $\psi(\bar z,A)$ in $M$ and let $C=\langle \bar c,A\rangle_M\subset M$.
    Suppose that $C$ and $B$ have an amalgam $N_0\models T$ over $A$. We can extend $N_0\subset N\models T^*$.
    Then $N\models \psi(\bar c, A)$, hence $N\models \neg\exists \bar x\theta_{B,A}(\bar x,A)$. 
    However, $N$ contains a copy of $B$ over $A$, which implies $\exists \bar x \theta_{B,A}(\bar x,A)$. 

    $(2\Rightarrow 1)$
    For each finite $A\subset B$, let $K(A,B)$ be the set of all (isomorphism types of) $C\supset A$ over $A$ such that
    \begin{enumerate}
        \item $C$ is a substructure of some $N\models T$ and $C$ is generated by at most $n_{A,B}$ elements,
        \item $C$ and $B$ do not have an amalgam in $T$ over $A$.
    \end{enumerate}
    Because $T$ is locally finite, the size $|C|$ is uniformly bounded by some $m_{A,B}\in\N$. 
    Hence $K(A,B)$ is finite since $L$ is finite.
    Let $\theta_A(\bar y)$ be a quantifier-free formula expressing $\mathrm{Diag}_A(\bar y)$, and let
    \[
        T^*=T\cup \left\{\forall \bar y\left[\theta_A(\bar y)\wedge\left(\bigwedge_{C\in K(A,B)}\neg\exists \bar z\theta_{C,A}(\bar z, \bar y)\right) \to \exists \bar x\theta_{B,A}(\bar x,\bar y)\right] \mid A\subset B\text{ are finite}\right\}. 
        \]
    If $M\models T$ is an e.c. model, then clearly $M\models T^*$, because $A\subset M$ and $A\subset B$ have an amalgam in $T$ over $A$ if and only if $M\models\exists \bar x\theta_{B,A}(\bar x,A)$.
    Conversely, suppose $M\models T^*$ and let $\bar a\subset M\subset N\models T$, $\varphi(\bar x,\bar a)$ a quantifier-free formula, and $N\models \exists \bar x\varphi(\bar x, \bar a)$.
    Then there is $\bar b\in N$ such that $N\models \varphi(\bar b,\bar a)$, so let $B=\langle \bar b,\bar a\rangle_N$ and $A=\langle \bar a\rangle_M$.
    This means $N$ is an amalgam of $M$ and $B$ over $A$.
    For every $C\in K(A,B)$, the definition of $K(A,B)$ gives $N\models \neg\exists \bar z\theta_{C,A}(\bar z, A)$.
    Because $M\subset N$ and $\theta_{C,A}$ is quantifier-free, $M\models \neg\exists \bar z\theta_{C,A}(\bar z, A)$. Hence we can find $B'\subset M\models \theta_{B,A}(B',A)$ and $\bar b'\in B'$ such that $M\models\varphi(\bar b',\bar a)$.
\end{proof}

\subsection{Graded Lie algebras and block decomposition}
\begin{definition}
    A graded Lie ring $L$ is an abelian group
    $(L,+)$ with a decomposition $L=\bigoplus_{i\geq 1}L_i$ and a Lie
    bracket $[-,-]:L\times L\to L$ satisfying the following:
    \begin{enumerate}
        \item $[L_i,L_j]\subset L_{i+j}$
        \item $[-,-]$ is biadditive:$[a+b,c]=[a,c]+[b,c]$ and $[a,b+c]=[a,b]+[a,c]$
        \item $[a,a]=0$
        \item Jacobi identity:$[[a,b],c]+[[b,c],a]+[[c,a],b]=0$
    \end{enumerate}
    If $L$ is a vector space over a field $k$ and $[-,-]$ is $k$-bilinear,
    then $L$ is called a graded Lie algebra over $k$.
\end{definition}

\begin{remark}
    \begin{enumerate}
        \item $[a,b]=-[b,a]$ for $a,b\in L$.
        \item $L$ is generated by $L_1$ if and only if
        $[L_i,L_1]=L_{i+1}$ for every $i\geq1$.
    \end{enumerate}
\end{remark}

\begin{example}[Grassmann algebra]\label{example:Grassmann algebra}
    Let $V$ be a vector space over $\mathbb F_3$, and set
    \[
    \Lambda^{[1,3]}V
    \coloneqq \Lambda^1V\oplus\Lambda^2V\oplus\Lambda^3V.
    \]
    For homogeneous elements $u\in\Lambda^iV$ and $v\in\Lambda^jV$, define
    \[
    [u,v]\coloneqq
    \begin{cases}
        u\wedge v,&(i,j)=(1,1)\text{ or }(2,1),\\
        -u\wedge v,&(i,j)=(1,2),\\
        0,&i+j\geq4.
    \end{cases}
    \]
    Extending this bracket bilinearly makes $\Lambda^{[1,3]}V$ a graded Lie
    algebra over $\mathbb F_3$.
\end{example}

\begin{definition}
    Let $V$ be a vector space and fix a decomposition $V=\bigoplus_\lambda V_\lambda$.
    Each $V_\lambda$ is called a block or $\lambda$-block.
    A subspace $W\subset V$ is called block-homogeneous (with respect to the decomposition) if $W=\bigoplus_\lambda (W\cap V_\lambda)$.
\end{definition}

\begin{proposition}
    Let $V=\bigoplus_\lambda V_\lambda$ be a decomposition of a vector space and let $W\subset V$ be a block-homogeneous subspace of $V$.
    Then $V/W\cong\bigoplus_\lambda V_\lambda/(W\cap V_\lambda)$ with a standard isomorphism $\bar v \mapsto \bigoplus_\lambda \bar v_\lambda$ where $v=\bigoplus_\lambda v_\lambda \in V$.
\end{proposition}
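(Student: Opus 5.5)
The plan is to write down the candidate map explicitly and check that it is well defined, linear, surjective and injective. Let $\pi_\lambda\colon V\to V_\lambda$ be the projection attached to the decomposition $V=\bigoplus_\lambda V_\lambda$, so $v_\lambda=\pi_\lambda(v)$; for each $v$ only finitely many $v_\lambda$ are nonzero. Let $q_\lambda\colon V_\lambda\to V_\lambda/(W\cap V_\lambda)$ be the quotient map. First I form the linear map
\[
\Phi\colon V\longrightarrow \bigoplus_\lambda V_\lambda/(W\cap V_\lambda),\qquad v\longmapsto (q_\lambda(v_\lambda))_\lambda .
\]
It lands in the direct sum rather than the product, because $v_\lambda=0$ for all but finitely many $\lambda$. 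It is linear because each $\pi_\lambda$ and each $q_\lambda$ is linear. It is surjective: given a finitely supported family $(q_\lambda(u_\lambda))_\lambda$ with $u_\lambda\in V_\lambda$, the element $v=\sum_\lambda u_\lambda$ satisfies $v_\lambda=u_\lambda$, so it maps onto that family.

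The key step, and the only place where block-homogeneity enters, is computing $\ker\Phi$. A vector $v$ lies in $\ker\Phi$ if and only if $v_\lambda\in W\cap V_\lambda$ for every $\lambda$. That holds if and only if $v\in\bigoplus_\lambda(W\cap V_\lambda)$, since the decomposition of $v$ into components is unique. By the hypothesis $W=\bigoplus_\lambda (W\cap V_\lambda)$, this last subspace is exactly $W$, so $\ker\Phi=W$.

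Finally, the first isomorphism theorem gives an isomorphism $V/W\cong\bigoplus_\lambda V_\lambda/(W\cap V_\lambda)$, namely $\bar v\mapsto(\bar v_\lambda)_\lambda$. This is the standard isomorphism in the statement, and its well-definedness on cosets follows from $\ker\Phi=W$. No step is a real obstacle. The only points needing care are that one inclusion, $\bigoplus_\lambda(W\cap V_\lambda)\subseteq W$, holds for any subspace $W$, while the reverse inclusion is precisely the block-homogeneity hypothesis. Without that hypothesis the kernel would be the possibly smaller subspace $\bigoplus_\lambda(W\cap V_\lambda)$, and the map $\bar v\mapsto(\bar v_\lambda)_\lambda$ would not even be well defined on $V/W$.
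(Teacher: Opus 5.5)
Your proof is correct. It is exactly the routine verification the paper leaves as ``Straightforward'': the componentwise quotient map is surjective, block-homogeneity gives $\ker\Phi=W$, and the first isomorphism theorem finishes.
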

\begin{proof}
    Straightforward.
\end{proof}

\subsection{Group-theoretic background}
Let $G$ be a group.
For subsets $A,B\subset G$, let $[A,B]$ denote the subgroup generated by
$\{[a,b]:a\in A,\ b\in B\}$.
\begin{definition}[Lower and upper central series]
    \begin{enumerate}
        \item The lower central series is defined by $\gamma_1(G)=G$ and
        \[
            \gamma_{n+1}(G)=[\gamma_n(G),G] \qquad (n\geq 1).
        \]
        \item The upper central series is given by $Z_0(G)=1$ and
        \[
            Z_n(G)=\{a\in G:[a,g_1,\ldots,g_n]=1
            \text{ for all }g_1,\ldots,g_n\in G\}
            \qquad (n\geq 1).
        \]
    \end{enumerate}
\end{definition}
\begin{remark}
    This description of $Z_n$ is equivalent to the usual recursive definition of the upper central series. Also, this form is useful for our argument.
\end{remark}

\begin{fact}
    \begin{enumerate}
        \item $[\gamma_i(G),\gamma_j(G)]\leq \gamma_{i+j}(G)$, hence $\gamma_i(G)/\gamma_{i+1}(G)$ is abelian.
        \item If $G$ is nilpotent of class at most $n$, then $\gamma_i(G)\leq Z_{n+1-i}(G)$ for every $1\leq i\leq n$.
    \end{enumerate}
\end{fact}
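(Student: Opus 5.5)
The statement has two parts, and I would prove them in order using only the definitions of $\gamma_i$ and $Z_n$ given above, together with standard commutator identities.

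For part (1), I would show $[\gamma_i(G),\gamma_j(G)]\leq\gamma_{i+j}(G)$ by induction on $j$, with $i$ arbitrary. The base case $j=1$ is the definition $\gamma_{i+1}(G)=[\gamma_i(G),G]$. For the inductive step, note that $\gamma_{j+1}(G)=[\gamma_j(G),G]$ is generated by commutators $[y,z]$ with $y\in\gamma_j(G)$ and $z\in G$. Moreover, $\gamma_{i+j+1}(G)$ is normal in $G$, since every $\gamma_k(G)$ is characteristic. So it suffices to prove $[\gamma_i(G),[\gamma_j(G),G]]\leq\gamma_{i+j+1}(G)$. For this I would use the Three Subgroups Lemma, which comes from the Hall--Witt identity: if $N$ is normal and two of $[[X,Y],Z]$, $[[Y,Z],X]$, $[[Z,X],Y]$ lie in $N$, then so does the third. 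Take $X=\gamma_j(G)$, $Y=G$, $Z=\gamma_i(G)$ and $N=\gamma_{i+j+1}(G)$. The term $[[G,\gamma_i],\gamma_j]$ equals $[\gamma_{i+1},\gamma_j]$, which lies in $\gamma_{i+j+1}$ by the induction hypothesis applied with $i+1$ in place of $i$. The term $[[\gamma_i,\gamma_j],G]$ lies in $[\gamma_{i+j},G]=\gamma_{i+j+1}$. Hence the third term, $[[\gamma_j,G],\gamma_i]$, also lies in $N$. The abelian-quotient statement then follows from the case $i=j$: we get $[\gamma_i,\gamma_i]\leq\gamma_{2i}\leq\gamma_{i+1}$.

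For part (2), I would use the description of $Z_n$ given above. Let $a\in\gamma_i(G)$ and $g_1,\dots,g_{n+1-i}\in G$. Repeatedly applying $[\gamma_k(G),G]=\gamma_{k+1}(G)$ gives $[a,g_1,\dots,g_{n+1-i}]\in\gamma_{n+1}(G)$. Since $G$ has class at most $n$, we have $\gamma_{n+1}(G)=1$, and therefore $a\in Z_{n+1-i}(G)$.

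The main obstacle is the inductive step of part (1). The commutator convention $[a,b]=aba^{-1}b^{-1}$ changes the exact form of the Hall--Witt identity, so I would first check that the Three Subgroups Lemma still holds in this convention. It does: the statement is symmetric under the inversion $a\mapsto a^{-1}$, which converts between the two conventions.
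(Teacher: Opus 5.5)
Your proof is correct. The paper gives no proof of this Fact and treats it as standard; your argument is the standard textbook one: induction on $j$ (for all $i$ at once) using the Three Subgroups Lemma with $N=\gamma_{i+j+1}(G)$ for (1), and iterating $[\gamma_k(G),G]=\gamma_{k+1}(G)$ against the paper's left-normed description of $Z_n$ for (2).

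Your convention check also holds. The paper's $aba^{-1}b^{-1}$ is the usual $x^{-1}y^{-1}xy$ evaluated at $(a^{-1},b^{-1})$, and subgroups are closed under inversion, so $[X,Y]$ is the same subgroup in either convention. (A small simplification: the abelian-quotient claim already follows from $[\gamma_i(G),\gamma_i(G)]\leq[\gamma_i(G),G]=\gamma_{i+1}(G)$.)
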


In what follows, we assume groups are of exponent $3$ unless otherwise noted.

\begin{fact}\label{fact:elementary equations}
Let $G$ be an exponent $3$ group and $a,b,c,d\in G$.
The following are well known and elementary.
\begin{enumerate}
    \item $[a,b,c,d]=1$; equivalently, $G$ is nilpotent of class at most $3$.
    \item $\gamma_3(G)\leq Z(G)$ and $\gamma_2(G)\leq Z_2(G)$.
    \item $\gamma_2(G)$ is abelian, but $Z_2(G)$ may not be abelian in general. 
    \item $[a,b,c]=[b,c,a]=[c,a,b]$. In particular, $[a,b,b]=1$; equivalently, $G$ is $2$-Engel.
    \item $[a,b^{-1}]=[a,b]^{-1}=[a^{-1},b]$
    \item $[a^b,a^c]=1$ where $a^b=b^{-1}ab$. Hence the normal closure $\llangle a\rrangle_G$ is abelian.
    \item $[a,b,c]=[b,a,c]^{-1}$
    \item $[a,bc]=[a,b][a,c][a,b,c]$
    \item $[ab,c]=[a,c][b,c][a,b,c]^{-1}$
    \item $[A,BC]\subset [A,B][A,C][A,B,C]$
    \item $[AB,C] \subset [A,C][B,C][A,B,C]$
\end{enumerate}
\end{fact}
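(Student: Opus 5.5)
These identities are classical (Levi--van der Waerden, Hopkins), and I would derive them in a fixed order, each step feeding the next. Start with (6), which is the one place where the exponent law is used directly. From $(xy)^3=1$ and $x^3=y^3=1$ we get $xyx=y^{-1}x^{-1}y^{-1}$. Applying this to suitable pairs, e.g.\ $x=a$ and $y=b^{-1}ab$, and to their inverses, yields $a\,a^b=a^b\,a$. Conjugating by $c$ gives $[a^b,a^c]=1$ for all $b,c$. It follows that $\llangle a\rrangle_G$ is generated by pairwise commuting conjugates of $a$, hence is abelian.

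Next derive the $2$-Engel part of (4). With the convention $[a,b]=aba^{-1}b^{-1}$, the element $[a,b]=a\cdot(a^{-1})^{b^{-1}}$ lies in $\llangle a\rrangle$, and so does $a$. Since $\llangle a\rrangle$ is abelian by (6), $[a,b]$ commutes with $a$, i.e.\ $[b,a,a]=1$, which is the $2$-Engel law. From here I would run the standard theory of $2$-Engel groups. First, $[x,y]$ commutes with $x$ and $y$, which gives the rewriting identities $[a,b^{-1}]=[a,b]^{-1}=[a^{-1},b]$ modulo triple commutators, and then exactly after the next step. Second, linearize $[x,y,y]=1$: replacing $y$ by $yz$ and expanding gives $[x,y,z]=[x,z,y]^{-1}$. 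Combining this with antisymmetry in the first two slots (which gives (7)) and the Hall--Witt identity yields the cyclic symmetry $[a,b,c]=[b,c,a]=[c,a,b]$, together with $[a,b,c]^3=1$.

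I expect the main obstacle to be (1), the vanishing of $[a,b,c,d]$. My plan is to follow Hopkins' argument. Using the cyclic symmetry and the alternating property, a $2$-Engel group has $[a,b,c,d]$ alternating in all four entries and satisfying $[a,b,c,d]^2=1$. Combined with $[a,b,c,d]^3=1$ from the exponent law, this forces $[a,b,c,d]=1$. The delicate step is verifying that $[a,b,c,d]$ really is multilinear and alternating. I would do this by working modulo $\gamma_5(G)$ and expanding with the product formulas, checking the signs carefully under the paper's bracket convention.

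With class $\le 3$ established, the rest is routine. For (2), note $\gamma_3\le Z(G)$ because $[\gamma_3,G]=\gamma_4=1$, and similarly $\gamma_2\le Z_2$; this is also the general Fact with $n=3$. For the first half of (3), $[\gamma_2,\gamma_2]\le\gamma_4=1$. For the second half, an example suffices: a nonabelian group of class $2$ and exponent $3$, such as the Heisenberg group over $\mathbb F_3$, has $Z_2(G)=G$, which is nonabelian. For (8) and (9), expand $[a,bc]=[a,b]\,b[a,c]b^{-1}$ (and the analogue for $[ab,c]$). Then rewrite the conjugate using $\gamma_3\le Z(G)$, the cyclic symmetry and (7); the signs of the triple-commutator corrections are fixed by the convention. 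Now (5) follows from (8) with $c=b^{-1}$ together with $[a,b,b]=1$. Finally, (10) and (11) are the subgroup versions of (8) and (9), obtained by applying those identities to generators and using that triple commutators are central.
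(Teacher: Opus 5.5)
The paper quotes this Fact as classical and gives no proof, so the comparison here is with the standard Levi--Hopkins argument, which your outline follows. Several parts are fine: the sign checks for (8) and (9) under $[a,b]=aba^{-1}b^{-1}$, the Heisenberg example, and the deductions of (2), (3), (10), (11).

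\textbf{The gap in (1).} This is the step you yourself single out. Working modulo $\gamma_5(G)$ can at best give $[a,b,c,d]\in\gamma_5(G)$, i.e.\ $\gamma_4(G)=\gamma_5(G)$. That implies $\gamma_4(G)=1$ only if $G$ is already known to be nilpotent, which is exactly what (1) asserts. Passing to $\langle a,b,c,d\rangle$ would not help without Burnside's theorem that finitely generated exponent-$3$ groups are finite, a much heavier input. Your linearization of $[x,y,y]=1$ has the same problem: naive expansion with higher terms discarded gives $[x,y,z]=[x,z,y]^{-1}$ only modulo $\gamma_4(G)$, which is useless for proving $\gamma_4(G)=1$.

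\textbf{A smaller slip in (6).} The instance $x=a$, $y=b^{-1}ab$ cannot work. Anything derived from cubes of words in $a,a^b$ alone also holds for the two free generators of $B(2,3)$, which do not commute. You need the cube law on words involving $b$: $(ab^{-1})^3=(ab)^3=b^3=1$ give $a\,a^b=ab^{-1}ab=ba^{-1}b^{-1}=b^{-1}aba=a^b\,a$.

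\textbf{The fix.} Everything can be done exactly, and multilinearity is never needed. The missing ingredient is that $[x,y]\in\llangle x\rrangle\cap\llangle y\rrangle$ and $[x,y,z]\in\llangle x\rrangle\cap\llangle y\rrangle\cap\llangle z\rrangle$. All these normal closures are abelian by (6), so $[x,y,z]$ commutes with $x,y,z,[x,y],[x,z]$. Then:
\begin{enumerate}
\item Exactly, $[x,yz]=[x,y][x,z][x,z,y]^{-1}$. Since $[x,yz]$ commutes with $yz$ (the $2$-Engel law) and $[x,z,y]$ commutes with $y$ and $z$, the element $P=[x,y][x,z]$ commutes with $yz$.
\item On the other hand, $(yz)P(yz)^{-1}=[x,y,z]^{-1}[x,z,y]^{-1}P$. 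Hence $[x,z,y]=[x,y,z]^{-1}$ exactly.
\item Also $[t^{-1},w]=t^{-1}[t,w]^{-1}t=[t,w]^{-1}$ by the $2$-Engel law. So triple commutators are exactly alternating, and the two transposition rules already give cyclic symmetry without Hall--Witt. Similarly, (5) holds exactly as soon as you have the $2$-Engel law.
\end{enumerate}
Now put $f(x,y,z,w)=[x,y,z,w]$. Transposing two of the first three entries inverts $f$. So does swapping $z,w$, by alternation for the triple $([x,y],z,w)$. Since $(13)(24)$ is even, $f(z,w,x,y)=f(x,y,z,w)$. But cyclic symmetry gives $f(x,y,z,w)=[[z,w],[x,y]]$ and $f(z,w,x,y)=[[x,y],[z,w]]=f(x,y,z,w)^{-1}$. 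Hence $f^2=1=f^3$, so $f=1$ in every group of exponent $3$, with no quotient taken.
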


The next fact is also obtained by standard calculations of groups.
\begin{fact}
Let $\mathcal I$ and $\mathcal J$ be disjoint sets. Then $F_{\mathcal V_3}(\mathcal I\cup \mathcal J)=F_{\mathcal V_3}(\mathcal I)*_{\mathcal V_3}F_{\mathcal V_3}(\mathcal J)$.
\end{fact}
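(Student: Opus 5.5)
The statement is a universal-property claim: $F_{\mathcal V_3}(\mathcal I\cup\mathcal J)$ is a coproduct of $F_{\mathcal V_3}(\mathcal I)$ and $F_{\mathcal V_3}(\mathcal J)$ in $\mathcal V_3$. I would therefore compare universal properties rather than compute with normal forms. Set $F=F_{\mathcal V_3}(\mathcal I\cup\mathcal J)$. The inclusions $\mathcal I\hookrightarrow\mathcal I\cup\mathcal J$ and $\mathcal J\hookrightarrow\mathcal I\cup\mathcal J$ induce homomorphisms $\iota_{\mathcal I}\colon F_{\mathcal V_3}(\mathcal I)\to F$ and $\iota_{\mathcal J}\colon F_{\mathcal V_3}(\mathcal J)\to F$, by the universal property of free objects in $\mathcal V_3$. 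The claim is that $(F,\iota_{\mathcal I},\iota_{\mathcal J})$ is a coproduct in $\mathcal V_3$.

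For existence, take $H\in\mathcal V_3$ and homomorphisms $f\colon F_{\mathcal V_3}(\mathcal I)\to H$ and $g\colon F_{\mathcal V_3}(\mathcal J)\to H$. Define a set map $\mathcal I\cup\mathcal J\to H$ by sending $i\mapsto f(i)$ and $j\mapsto g(j)$; disjointness of $\mathcal I$ and $\mathcal J$ makes this well defined. Freeness of $F$ in $\mathcal V_3$ extends this map to a homomorphism $h\colon F\to H$. Then $h\circ\iota_{\mathcal I}$ agrees with $f$ on the generators $\mathcal I$, so the two are equal. The same argument gives $h\circ\iota_{\mathcal J}=g$.

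For uniqueness, any $h'$ with the same two properties agrees with $h$ on $\mathcal I\cup\mathcal J$. This set generates $F$, so $h'=h$.

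Coproducts are unique up to unique isomorphism, and $G*_{\mathcal V_3}H=(G*H)/\llangle (G*H)^3\rrangle$ is a coproduct in $\mathcal V_3$. Hence there is an isomorphism compatible with the canonical maps. The only point needing care is verifying that this quotient really satisfies the universal property. That holds because any pair of maps into an exponent-$3$ group factors through $G*H$ by the universal property of free products, and then through the quotient, since all cubes map to $1$.

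Alternatively, one can argue concretely. We have $F(\mathcal I\cup\mathcal J)=F(\mathcal I)*F(\mathcal J)$. Factoring out cubes of the whole group also kills the cube-subgroups of each factor, and the resulting map from $F(\mathcal I)*F(\mathcal J)$ through $F_{\mathcal V_3}(\mathcal I)*F_{\mathcal V_3}(\mathcal J)$ identifies both quotients. There is no real obstacle; the argument is purely categorical.
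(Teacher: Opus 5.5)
Your argument is correct and complete. The paper gives no proof of this fact; it only says it is ``obtained by standard calculations of groups.'' That phrase most plausibly refers to the presentation-level argument in your last paragraph. There one uses $F(\mathcal I\cup\mathcal J)=F(\mathcal I)*F(\mathcal J)$ and notes that the kernel of $F(\mathcal I)*F(\mathcal J)\to F_{\mathcal V_3}(\mathcal I)*F_{\mathcal V_3}(\mathcal J)$ is the normal closure of the cube subgroups of the two factors. That kernel therefore lies in $\llangle (F(\mathcal I)*F(\mathcal J))^3\rrangle$, and a surjection carries the cube normal subgroup onto the cube normal subgroup of its target. Your sketch of this route asserts these steps rather than checking them, but they are routine.

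Your main universal-property route avoids that kernel bookkeeping. More usefully, it shows that the identification is the canonical one, compatible with the maps from $F_{\mathcal V_3}(\mathcal I)$ and $F_{\mathcal V_3}(\mathcal J)$. This is the form in which the paper actually uses the fact: for the normal form in Remark~\ref{remark:infinite dim}, and for the identification $F_{G_0}*_{\mathcal V_3}F_{G_1}=F_{\mathcal V_3}(\mathcal I_0\sqcup\mathcal I_1)$ in the proof of Proposition~\ref{proposition:gr of free product}.

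When you check that $(G*H)/\llangle (G*H)^3\rrangle$ is a coproduct in $\mathcal V_3$, also record two trivial points. The quotient lies in $\mathcal V_3$, since every cube is killed. The induced map is unique, since the quotient is generated by the images of $G$ and $H$.
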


\begin{lemma}
    Let $G,H\in\mathcal V_3$.
    Then natural mappings $j_G:G\to G*_{\mathcal V_3}H$ and $j_H:H\to G*_{\mathcal V_3}H$ are injective.
    Hence we can regard $G,H\leq G*_{\mathcal V_3}H$.
\end{lemma}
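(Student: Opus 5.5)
The natural approach is to use retractions. Since $\mathcal V_3$ is a variety, the coproduct $G*_{\mathcal V_3}H$ has the universal property of the coproduct in $\mathcal V_3$: for any $K\in\mathcal V_3$ and homomorphisms $\alpha:G\to K$, $\beta:H\to K$, there is a unique homomorphism $G*_{\mathcal V_3}H\to K$ extending them. Concretely, the pair $(\alpha,\beta)$ first induces a homomorphism $G*H\to K$ by the universal property of the free product. Because $K$ has exponent $3$, this map kills every cube, hence kills $\llangle (G*H)^3\rrangle_{G*H}$, and so factors through the quotient $G*_{\mathcal V_3}H$.

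To show $j_G$ is injective, take $K=G$, $\alpha=\mathrm{id}_G$, and let $\beta:H\to G$ be the trivial homomorphism. Both are legitimate since $G\in\mathcal V_3$. This gives a homomorphism $\rho_G:G*_{\mathcal V_3}H\to G$ with $\rho_G\circ j_G=\mathrm{id}_G$. Here $j_G$ is the composite $G\to G*H\to G*_{\mathcal V_3}H$, and the identity holds because the induced map on $G*H$ restricts to $\mathrm{id}_G$ on the factor $G$. A map with a left inverse is injective, so $j_G$ is injective. The same argument with the roles of $G$ and $H$ exchanged, using $\alpha$ trivial and $\beta=\mathrm{id}_H$, shows $j_H$ is injective. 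We may therefore identify $G$ and $H$ with their images in $G*_{\mathcal V_3}H$.

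No step here is a real obstacle. The only point needing care is that the induced map from $G*H$ really descends to the quotient by $\llangle (G*H)^3\rrangle$. This holds because the image of a cube $w^3$ is the cube of an element of $G$, which is trivial, and the kernel of a homomorphism is normal, so it contains the whole normal closure.
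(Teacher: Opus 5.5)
Your proof is correct and takes the same approach as the paper, which also uses the retraction $G*_{\mathcal V_3}H\to G$ that is the identity on $G$ and trivial on $H$, and then concludes by symmetry. You additionally check that the map induced on $G*H$ descends to the quotient by $\llangle (G*H)^3\rrangle$, a step the paper leaves implicit.
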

\begin{proof}
    Define $f:G*_{\mathcal V_3} H\to G$ by $f(j_H(h))=1$ for all $h\in H$ and $f(j_G(g))=g$ for all $g\in G$. 
    Then $f$ is a retraction of $j_G$ and hence $j_G$ is injective.
    By symmetry, $j_H$ is also injective.
\end{proof}

 We recall the associated graded Lie algebra $\gr(G)$ for a group $G$ of exponent $3$.
\begin{definition}
    Let $G$ be a group of exponent $3$.
    \begin{enumerate}
        \item $\gr_n(G) \coloneqq \gamma_n(G)/\gamma_{n+1}(G)$.
        \item For $a\in\gamma_n(G)$ and $b\in\gamma_m(G)$, define
        \[
        [a\gamma_{n+1}(G),b\gamma_{m+1}(G)]
        \coloneqq [a,b]\gamma_{n+m+1}(G).
        \]
    \end{enumerate}
    Then $\gr(G)=\bigoplus_i\gr_i(G)$ is a graded Lie algebra.
    Indeed, the inclusions
    $[\gamma_i(G),\gamma_j(G)]\leq\gamma_{i+j}(G)$ show that the bracket is
    independent of the chosen representatives.
    Each quotient $\gr_n(G)$ has exponent $3$, and hence is naturally
    a vector space over $\mathbb F_3$.
    The remaining axioms follow from Fact \ref{fact:elementary equations}.
\end{definition}

The following are also immediate by Fact \ref{fact:elementary equations}.
\begin{lemma}
Let $G$ be a group of exponent $3$.
    \begin{enumerate}
        \item $\gr_4(G)=0$.
        \item $[a,b,c]=[b,c,a]$ and $[a,b,b]=0$ for every
        $a,b,c\in\gr(G)$.
        \item $\gr(G)$ is generated by $\gr_1(G)$ as a Lie algebra over
        $\mathbb F_3$.
    \end{enumerate}
\end{lemma}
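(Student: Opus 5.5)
Each of the three items is obtained by translating Fact~\ref{fact:elementary equations} through the definition of the bracket on $\gr(G)$. Throughout, I use that the bracket on $\gr(G)$ is well defined and biadditive; this was already noted in the definition of $\gr(G)$.

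For (1), note that $\gamma_4(G)$ is generated by the commutators $[x,g]$ with $x\in\gamma_3(G)$ and $g\in G$. By Fact~\ref{fact:elementary equations}(2), $\gamma_3(G)\leq Z(G)$, so each such commutator is trivial. Hence $\gamma_4(G)=1$. It follows that $\gr_4(G)=\gamma_4(G)/\gamma_5(G)=0$, and likewise $\gr_i(G)=0$ for all $i\geq4$.

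For (2), I first treat homogeneous elements of degree $1$, i.e.\ $a=x\gamma_2$, $b=y\gamma_2$, $c=z\gamma_2$. By definition the Lie bracket $[a,b,c]$ is the class of the group commutator $[x,y,z]$ in $\gr_3(G)=\gamma_3(G)$ (note $\gamma_4=1$). Fact~\ref{fact:elementary equations}(4) gives $[x,y,z]=[y,z,x]$ and $[x,y,y]=1$ in the group, so both identities hold on degree-one elements.

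To extend to arbitrary elements, observe that any triple bracket with total degree at least $4$ vanishes by (1). Hence, after multilinear expansion, only the degree-$1$ components contribute, and cyclic symmetry $[a,b,c]=[b,c,a]$ follows by trilinearity. For $[a,b,b]=0$ I will not rely on trilinearity alone, since $[a,b,b]$ is only quadratic in $b$. Instead, I write $b=\sum b_i$ with the $b_i$ in degree one and expand. This produces the terms $[a,b_i,b_i]=0$ together with the cross terms $[a,b_i,b_j]+[a,b_j,b_i]$. These cross terms vanish by linearizing the group identity: apply $[x,yw,yw]=1$ and use Fact~\ref{fact:elementary equations}(8),(9) modulo $\gamma_4=1$ to get $[x,y,w][x,w,y]=1$. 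This linearization is the only slightly delicate step. Alternatively, the cross terms vanish by cyclic symmetry combined with Jacobi, since $3$ is invertible in neither direction needed here.

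For (3), use the Remark: it suffices to show $[\gr_i,\gr_1]=\gr_{i+1}$. The group $\gamma_{i+1}(G)=[\gamma_i(G),G]$ is generated by the commutators $[x,g]$ with $x\in\gamma_i(G)$ and $g\in G$. Their images in $\gr_{i+1}(G)$ are exactly the brackets $[x\gamma_{i+1},g\gamma_2]$, so these brackets span $\gr_{i+1}(G)$ as an $\mathbb F_3$-space.
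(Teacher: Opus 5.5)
Your argument is correct and is essentially the paper's. The paper simply declares all three items immediate from Fact~\ref{fact:elementary equations}, and you fill in exactly those routine details: $\gamma_3(G)\leq Z(G)$ for (1), reduction to degree-one components via $\gr_n(G)=0$ for $n\geq 4$ for (2), and $\gamma_{i+1}(G)=[\gamma_i(G),G]$ for (3).

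Two small remarks on (2). The linearization for $[a,b,b]=0$ is unnecessary: the degree-one component of $b$ is already a single coset $y\gamma_2(G)$, so $[x,y,y]=1$ applies directly. Your ``Jacobi'' aside is also off: in characteristic $3$, Jacobi together with cyclic symmetry only yields $3[a,b,c]=0$, whereas the cross terms vanish by cyclic symmetry plus antisymmetry, $[a,c,b]=[b,a,c]=-[a,b,c]$.
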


\begin{notation}
    \begin{enumerate}
        \item For $a\in \gamma_i(G)$, we define
        $\gr_i(a)=a\gamma_{i+1}(G)\in\gr_i(G)$.
        \item For $K\leq G$, 
        \[
        \gr_n^G(K)
        \coloneqq
        \{\gr_n(a)\mid a\in \gamma_n(G)\cap K\}
        =
        \frac{(K\cap\gamma_n(G))\gamma_{n+1}(G)}{\gamma_{n+1}(G)}
        \leq \gr_n(G).
        \]
        Hence $\gr_n^G(K)\cong (K\cap \gamma_n(G))/(K\cap \gamma_{n+1}(G))$. 
    \end{enumerate}
\end{notation}

\begin{example}
    \begin{enumerate}
        \item If $G$ is abelian, then $\gr(G)=\gr_1(G)=G$.
        \item If $G=H\times K$, then $\gamma_i(G)=\gamma_i(H)\times \gamma_i(K)$ and $\gr_i(G)\cong \gr_i(H)\oplus \gr_i(K)$.
        \item Let $F_2:= F_{\mathcal V_3}(\{x,y\})$ be a free group in $\mathcal V_3$ generated by $\{x,y\}$. Then $\gamma_2(F_2)=\{[x,y]^k\mid 0\leq k\leq 2\}$ and $\gamma_3(F_2)=1$, hence $\gr_1(F_2)=\mathbb F_3\bar x\oplus \mathbb F_3\bar y$ and $\gr_2(F_2)=\mathbb F_3 [x, y]$.
    \end{enumerate}
\end{example}

If $f:G\to H$ is a group homomorphism, then $\gamma_i(f(G))\leq \gamma_i(H)$ holds.
So we can consider an induced homomorphism between associated graded Lie algebras.
\begin{definition}
    For a group homomorphism $f:G\to H$, put
    $\gr(f)=\bigoplus_{i\geq1}\gr_i(f):\gr(G)\to\gr(H)$, where
    \[
    \gr_i(f)\bigl(a\gamma_{i+1}(G)\bigr)
    =f(a)\gamma_{i+1}(H).
    \]
\end{definition}

We can describe when $\gr(f)$ is injective by using strict extensions given in the next definition.

\begin{definition}
    Let $G\leq H$ be groups of exponent $3$.
    We say that $G$ is \emph{strict with respect to the lower central series}
    in $H$, and write $G\leq_{\LCS}H$, if
    \begin{enumerate}
        \item $\gamma_2(G)=\gamma_2(H)\cap G$ and
        \item $\gamma_3(G)=\gamma_3(H)\cap G$.
    \end{enumerate}
\end{definition}

\begin{proposition}\label{proposition:gr(f) and LCS}
    Let $f:G\to H$ be a homomorphism of exponent $3$ groups. 
    \begin{enumerate}
        \item If $\gr(f):\gr(G)\to\gr(H)$ is injective then so is $f$.
        \item If $f$ is injective, then the following are equivalent.
        \begin{enumerate}
            \item $\gr(f):\gr(G)\to\gr(H)$ is injective.
            \item $f(G)\leq_{\LCS} H$.
        \end{enumerate}
    \end{enumerate}
\end{proposition}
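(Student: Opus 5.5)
Both parts come down to comparing kernels degree by degree. The key input is that every exponent-$3$ group is nilpotent of class at most $3$ (Fact~\ref{fact:elementary equations}), so $\gamma_4(G)=1$ and $\gr(G)=\gr_1(G)\oplus\gr_2(G)\oplus\gr_3(G)$.

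For (1), suppose $\gr(f)$ is injective and let $a\in G$ with $a\neq 1$. Since $\gamma_4(G)=1$, there is a largest $i\in\{1,2,3\}$ with $a\in\gamma_i(G)$. Then $a\notin\gamma_{i+1}(G)$, so $\gr_i(a)\neq 0$. By injectivity, $\gr_i(f)(\gr_i(a))=f(a)\gamma_{i+1}(H)$ is nonzero. Hence $f(a)\notin\gamma_{i+1}(H)$, and in particular $f(a)\neq 1$.

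For (2), assume $f$ is injective and identify $G$ with $f(G)\leq H$. The kernel of $\gr_i(f)$ is
\[
\bigl(\gamma_i(G)\cap\gamma_{i+1}(H)\bigr)\gamma_{i+1}(G)/\gamma_{i+1}(G).
\]
Thus $\gr_i(f)$ is injective exactly when $\gamma_i(G)\cap\gamma_{i+1}(H)\subseteq\gamma_{i+1}(G)$, which we call condition $(\ast_i)$.

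(b)$\Rightarrow$(a). Assume $\gamma_{i+1}(G)=\gamma_{i+1}(H)\cap G$ for $i=1,2$. Then $(\ast_1)$ and $(\ast_2)$ hold, since $\gamma_i(G)\cap\gamma_{i+1}(H)\subseteq G\cap\gamma_{i+1}(H)=\gamma_{i+1}(G)$. For $i=3$, we have $\gamma_4(H)=1$, so $\gamma_3(G)\cap\gamma_4(H)=1$, and $(\ast_3)$ holds trivially.

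(a)$\Rightarrow$(b). This is the main step, and it uses a downward induction. The inclusions $\gamma_j(G)\subseteq\gamma_j(H)\cap G$ are automatic, so only the reverse inclusions need proof.

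First take $j=3$. Let $x\in G\cap\gamma_3(H)$. We want to show $x\in\gamma_3(G)$.
\begin{enumerate}
\item If $x\notin\gamma_2(G)$, then $x\in\gamma_1(G)\cap\gamma_2(H)$ with $x\notin\gamma_2(G)$, which contradicts $(\ast_1)$. So $x\in\gamma_2(G)$.
\item Now $x\in\gamma_2(G)\cap\gamma_3(H)$, and $(\ast_2)$ gives $x\in\gamma_3(G)$.
\end{enumerate}

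Next take $j=2$. Let $x\in G\cap\gamma_2(H)$. Then $x\in\gamma_1(G)\cap\gamma_2(H)$, and $(\ast_1)$ directly gives $x\in\gamma_2(G)$.

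In general, the argument filters $x$ down through the series $\gamma_1(G)\supseteq\gamma_2(G)\supseteq\gamma_3(G)$, applying $(\ast_i)$ at each level. This uses only $\gamma_{i+1}(H)\subseteq\gamma_i(H)$.

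The only subtle point is the bookkeeping for $\ker\gr_i(f)$, in particular the identification of $\gr_n^H(G)$ given in the Notation. Everything else is elementary.
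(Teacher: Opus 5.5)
Your proposal is correct and follows essentially the same route as the paper: for (1) you take the largest $i$ with $a\in\gamma_i(G)$ and use $\gr_i(a)\neq 0$, and for (2) you compute $\ker\gr_i(f)=(\gamma_i(G)\cap\gamma_{i+1}(H))/\gamma_{i+1}(G)$ and compare degree by degree, with $\gr_3(f)$ automatically injective because $\gamma_4(H)=1$. The differences are cosmetic: you argue (1) directly rather than by contrapositive, and you spell out the step $G\cap\gamma_3(H)\subseteq G\cap\gamma_2(H)=\gamma_2(G)$, which the paper compresses into ``under this equality.''
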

\begin{proof}
    For (1), suppose that $f$ is not injective and choose $1\neq a\in\ker(f)$.
    Since $\gamma_4(G)=1$, there is a largest $i\in\{1,2,3\}$ such that
    $a\in\gamma_i(G)$.  Then
    $a\gamma_{i+1}(G)$ is a nonzero element of $\gr_i(G)$ that belongs to
    $\ker(\gr_i(f))$.  Hence $\gr(f)$ is not injective.

    For (2), assume that $f$ is injective and identify $G$ with $f(G)\leq H$.
    For $i=1,2,3$, one has
    \[
    \ker(\gr_i(f))
    =
    \frac{\gamma_i(G)\cap\gamma_{i+1}(H)}{\gamma_{i+1}(G)}.
    \]
    Thus $\gr_1(f)$ is injective if and only if
    $G\cap\gamma_2(H)=\gamma_2(G)$.  Under this equality,
    $\gr_2(f)$ is injective if and only if
    $G\cap\gamma_3(H)=\gamma_3(G)$.  Finally, $\gr_3(f)$ is injective because
    $\gamma_4(H)=1$.  These are exactly the two conditions defining
    $G\leq_{\LCS}H$.
\end{proof}

\begin{definition}
    Let $G\in \mathcal V_3$. 
    We say that the lower central series of $G$ coincides with the upper
    central series in reverse order if $\gamma_i(G)=Z_{4-i}(G)$ for every
    $1\leq i\leq 3$.
\end{definition}

\begin{lemma}
    Let $G\leq H\in \mathcal V_3$.
    \begin{enumerate}
        \item $\gamma_i(G)\subset \gamma_i(H)\cap G \subset Z_{4-i}(H)\cap G \subset Z_{4-i}(G)$ for $1\leq i\leq 3$.
        \item If the lower central series of $G$ coincides with the upper
        central series in reverse order, then $G\leq_{\LCS}H$. 
    \end{enumerate}
\end{lemma}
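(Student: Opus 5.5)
Part (1) is a chain of three inclusions, and I will check each one separately. The first, $\gamma_i(G)\subset\gamma_i(H)\cap G$, follows by induction on $i$ from $G\leq H$. The case $i=1$ is trivial. If $\gamma_i(G)\leq\gamma_i(H)$, then $\gamma_{i+1}(G)=[\gamma_i(G),G]\leq[\gamma_i(H),H]=\gamma_{i+1}(H)$. Intersecting with $G$ costs nothing, since $\gamma_i(G)\subset G$.

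For the second inclusion I will use the earlier Fact that a group $H$ of nilpotency class at most $n$ satisfies $\gamma_i(H)\leq Z_{n+1-i}(H)$. By Fact~\ref{fact:elementary equations}(1), every $H\in\mathcal V_3$ has class at most $3$, so $\gamma_i(H)\leq Z_{4-i}(H)$ for $1\leq i\leq3$. Intersecting with $G$ gives $\gamma_i(H)\cap G\subset Z_{4-i}(H)\cap G$.

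For the third inclusion I will use the elementwise description of the upper central series:
\[
Z_n(H)=\{a\in H:[a,h_1,\dots,h_n]=1\ \text{for all}\ h_1,\dots,h_n\in H\}.
\]
Take $a\in Z_{4-i}(H)\cap G$. Then $[a,g_1,\dots,g_{4-i}]=1$ holds in particular for all $g_1,\dots,g_{4-i}\in G\subset H$. Since these commutators are computed the same way in $G$ and in $H$, this says exactly that $a\in Z_{4-i}(G)$. The case $i=1$ causes no trouble, because $Z_3(G)=G$ by class at most $3$.

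For part (2), assume $\gamma_i(G)=Z_{4-i}(G)$ for $i=1,2,3$. Then the chain in (1) begins and ends with the same set, so every inclusion in it is an equality. In particular $\gamma_i(G)=\gamma_i(H)\cap G$ for $i=2,3$, and these are precisely the two conditions defining $G\leq_{\LCS}H$.

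There is no real obstacle here. The only point that needs care is the third inclusion: it depends on using the commutator-identity characterization of $Z_n$, recorded in the earlier Remark, rather than the recursive definition. With the identity characterization, the restriction from $H$ to $G$ is immediate.
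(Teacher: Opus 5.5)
Your proof is correct and follows the same route as the paper. The paper simply declares (1) immediate from the elementwise definition of $Z_n$ together with $\gamma_4(H)=1$, and (2) an easy consequence of (1); you have written out exactly those details.
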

\begin{proof}
    (1) is immediate from the definition of $\gamma_i$ and $Z_{4-i}$, because $\gamma_4(H)=1$.

    (2) is an easy application of (1).
\end{proof}

To determine an associated graded Lie algebra $\gr(F)$ of $F=F_{\mathcal V_3}(\mathcal I)$, we recall the following result:
\begin{fact}[Levi and van der Waerden \cite{LeviVanDerWaerden1933}]\label{fact:Levi and van der Waerden}
    Let $\mathcal I=\{x_0,\cdots, x_{r-1}\}$ and $F=F_{\mathcal V_3}(\mathcal I)=B(r,3)$ the Burnside group of $r$-generators and exponent $3$. Then $|F|=3^{t(r)}$ where $t(r)=r + \binom r2 + \binom r3$, and every $a\in F$ is uniquely expressed as follows.
    \[
    a=x_0^{l_0}\cdots x_{r-1}^{l_{r-1}}
    \prod_{i<j<r}[x_i,x_j]^{m_{i,j}}
    \prod_{i<j<k<r}[x_i,x_j,x_k]^{n_{i,j,k}}
    \quad (l_i,m_{i,j},n_{i,j,k}\in\mathbb F_3)
    \]
\end{fact}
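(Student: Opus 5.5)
Since this is a classical fact, the plan is to prove it in two halves. First, every element of $F$ can be written in the displayed collected form, which gives $|F|\le 3^{t(r)}$. Second, there is a group of exponent $3$ generated by $r$ elements in which these $3^{t(r)}$ words are pairwise distinct, which gives both uniqueness and the equality $|F|=3^{t(r)}$.

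For the existence of normal forms I would use a collection process based on Fact~\ref{fact:elementary equations}. Since $F$ is nilpotent of class at most $3$ and $\gamma_2(F)$ is abelian, any word in the generators can be reordered. Moving $x_j$ past $x_i$ for $j>i$ produces $[x_i,x_j]^{\pm1}$ factors, via $x_jx_i=x_ix_j[x_j,x_i]^{x_i\cdots}$ and the commutator identities (8)--(9). Moving commutators past generators produces triple commutators, which are central. Exponents may be reduced mod $3$.

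It remains to express all basic commutators in terms of the chosen ones:
\begin{itemize}
\item[(a)] $[x_j,x_i]=[x_i,x_j]^{-1}$ and $[x_i,x_i]=1$;
\item[(b)] every $[x_a,x_b,x_c]$ is trivial when two indices coincide, by the $2$-Engel law and the cyclic symmetry (4);
\item[(c)] every $[x_a,x_b,x_c]$ equals $[x_i,x_j,x_k]^{\pm1}$ with $i<j<k$ the sorted indices, using (4) and (7);
\item[(d)] commutators of products expand multilinearly modulo central terms by (8)--(9).
\end{itemize}
Together these give $|F|\le 3^{t(r)}$.

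For the lower bound I would construct an explicit model. The plan is to take $V=\mathbb F_3^r$ and use the Grassmann algebra $L=\Lambda^{[1,3]}V$ of Example~\ref{example:Grassmann algebra}, which is nilpotent of class $3$. Since $3$ is the characteristic and the class is $3$, the Baker--Campbell--Hausdorff formula cannot be used directly, because it needs division by $2$ and $3$ up to degree $3$, and the degree-$3$ term has denominator $12$. I would instead define a group law on the set $\mathbb F_3^{t(r)}$ of coordinate tuples $(l,m,n)$ by an explicit polynomial formula mimicking collection. Multiplication $(l,m,n)\cdot(l',m',n')$ sets the linear part to $l+l'$. The quadratic part is $m+m'+$ a term $\sum_{i<j} l_j l'_i e_{ij}$ with appropriate sign. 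The cubic part is $n+n'+$ terms bilinear in $(l,m')$ and cubic/quadratic in the $l$'s. I would verify associativity and exponent $3$ directly.

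A cleaner alternative that I would try first is to realise the group inside a concrete finite group of exponent $3$. One option is the free object of class $\le 2$ (the Heisenberg-type group on $V\oplus\Lambda^2V$) extended centrally by $\Lambda^3V$ via an explicit cocycle. Another is to embed into unitriangular-type matrices over a commutative $\mathbb F_3$-algebra with nilpotent generators. Distinctness of the normal forms would then be checked by projecting onto subgroups generated by $2$ or $3$ generators: it suffices to show that the map to $\prod_{\{i,j,k\}}B(3,3)$ separates the coordinates, reducing everything to showing $|B(3,3)|=27\cdot 3=3^7$.

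The main obstacle is this lower bound, specifically exhibiting a group of order $3^7$ generated by $3$ elements of exponent $3$ with $[x_0,x_1,x_2]\neq1$. I would do this with an explicit cocycle or matrix model and verify the exponent directly: for class-$3$ groups, $(ab)^3=a^3b^3[b,a]^3[b,a,a]^{\ast}[b,a,b]^{\ast}$ reduces the check to generators and to the $2$-Engel condition. Since the statement is a cited classical fact, in the paper it is reasonable to defer to \cite{LeviVanDerWaerden1933} for this verification.
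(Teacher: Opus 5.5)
The paper gives no proof of this statement; it is quoted from Levi--van der Waerden. Judged as a proof, your proposal has a genuine gap, and it sits exactly where the content of the theorem lies. Your collection argument (alternation and multilinearity of $[x_a,x_b]$ modulo $\gamma_3$ and of $[x_a,x_b,x_c]$) does show that every element has the displayed form, so $|F|\le 3^{t(r)}$. Your retraction argument also correctly reduces uniqueness for all $r$ to the single inequality $|B(3,3)|\ge 3^7$ (note that $3^7=3^3\cdot 3^3\cdot 3$, not $27\cdot 3$).

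That inequality, however, is never established. The cubic part of your polynomial group law is left unspecified, so associativity and exponent $3$ cannot be checked, and no cocycle is written down. The matrix route also fails in its natural form. In characteristic $3$ one has $(1+N)^3=1+N^3$. If you force $N^3=0$ by putting all entries in an ideal $I$ with $I^3=0$, then $[g,h]-1=(gh-hg)g^{-1}h^{-1}$ has entries in $I^2$. Hence every triple commutator is trivial, and the group has class $2$. Your last sentence defers this step to the citation, which is what the paper does for the whole fact. As an argument, the proposal proves existence of normal forms but not their uniqueness, i.e.\ not $|F|=3^{t(r)}$.

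The missing step has a short proof that fits your reduction. Let $E=\Lambda(\mathbb F_3e_1\oplus\mathbb F_3e_2)$, with basis $1,e_1,e_2,e_1e_2$, and let $U=1+E^{+}$, a group of order $27$ under multiplication.
\begin{itemize}
\item[(i)] Every $a\in E^{+}$ satisfies $a^2=0$. So for $h\in U$ one has $h^3=1$, and, as operators on $E$, $1+h+h^2=(h-1)^2=0$.
\item[(ii)] In $G=(E,+)\rtimes U$, with $U$ acting by left multiplication, $(v,h)^3=\bigl((1+h+h^2)v,h^3\bigr)=(0,1)$. Thus $G$ has exponent $3$ and order $3^4\cdot 3^3=3^7$.
\item[(iii)] $G$ is generated by $(0,1+e_1)$, $(0,1+e_2)$ and $(1,1)$. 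The first two generate $\{0\}\times U$, since $[1+e_1,1+e_2]=1-e_1e_2$. Conjugating $(1,1)$ by $(0,h)$ gives $(h,1)$, and the elements of $U$ span $E$.
\end{itemize}
Hence $B(3,3)$ surjects onto $G$, and $|B(3,3)|\ge 3^7$.

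The same construction also works without reducing to $r=3$. Let $B(r,3)$ act on the truncated algebra $\Lambda^{\le 2}(\mathbb F_3^r)$ by left multiplication via $x_i\mapsto 1+e_i$. This is well defined because the unit group $1+\Lambda^{+}$ of that algebra has exponent $3$. The same computation gives $|B(r+1,3)|\ge 3^{1+r+\binom r2}\,|B(r,3)|$. By induction this yields $|B(r,3)|\ge 3^{t(r)}$ directly.
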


\begin{remark}\label{remark:infinite dim}
    When $\mathcal I=\{x_i\mid i< \kappa\}$ for some infinite cardinal $\kappa$, then each $a\in F_{\mathcal V_3}(\mathcal I)$ has a unique expression
    \[
    a=\prod_{i<\kappa} x_i^{l_i}
    \prod_{i<j<\kappa}[x_i,x_j]^{m_{i,j}}
    \prod_{i<j<k<\kappa}[x_i,x_j,x_k]^{n_{i,j,k}}
    \]
    where $l_i,m_{i,j},n_{i,j,k}\in\mathbb F_3$ and the coefficient families
    $(l_i)$, $(m_{i,j})$, and $(n_{i,j,k})$ have finite support.
    This follows from the identity $F_{\mathcal V_3}(\mathcal I\sqcup \mathcal J)=F_{\mathcal V_3}(\mathcal I)*_{\mathcal V_3}F_{\mathcal V_3}(\mathcal J)$.
\end{remark}

We use this normal form to identify the associated graded Lie algebra of
$B(r,3)$.

\begin{proposition}\label{proposition:gr(F) is Grassmann algebra}
    Let $F$ be the Burnside group $B(r,3)=F_{\mathcal V_3}(\{x_0,\cdots, x_{r-1}\})$.
    Let $V$ be a $\mathbb F_3$-vector space with basis
    $\{x_0,\ldots,x_{r-1}\}$.  Define linear maps
    $\sigma_n:\gr_n(F)\to\Lambda^nV$ by
    \begin{enumerate}
        \item $\overline{x_i}\mapsto x_i$ for $n=1$,
        \item $\overline{[x_i,x_j]}\mapsto x_i\wedge x_j$ for $n=2$ and
        $i<j$,
        \item $\overline{[x_i,x_j,x_k]}\mapsto x_i\wedge x_j\wedge x_k$
        for $n=3$ and $i<j<k$,
    \end{enumerate}
    where $\bar a=a\gamma_{i+1}(F)\in \gr_i(F)$ for $a\in\gamma_i(F)$.
    Then
    \[
    \sigma=\sigma_1\oplus\sigma_2\oplus\sigma_3:
    \gr(F)\longrightarrow\Lambda^{[1,3]}V
    \]
    is an isomorphism of graded Lie algebras, where the bracket on
    $\Lambda^{[1,3]}V$ is the one defined in Example \ref{example:Grassmann algebra}.
\end{proposition}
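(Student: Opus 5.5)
We first use the normal form of Fact~\ref{fact:Levi and van der Waerden} to find the lower central series of $F$. Put
\[
P_2=\Bigl\{\prod_{i<j}[x_i,x_j]^{m_{i,j}}\prod_{i<j<k}[x_i,x_j,x_k]^{n_{i,j,k}}\Bigr\},
\qquad
P_3=\Bigl\{\prod_{i<j<k}[x_i,x_j,x_k]^{n_{i,j,k}}\Bigr\}.
\]
We claim that $\gamma_2(F)=P_2$ and $\gamma_3(F)=P_3$. The inclusions $P_2\subseteq\gamma_2(F)$ and $P_3\subseteq\gamma_3(F)$ are clear. For the reverse inclusions we count. The abelianization $F/\gamma_2(F)$ is the free abelian group of exponent $3$ on $r$ generators, so it has order $3^r$. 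Hence $|\gamma_2(F)|=3^{t(r)-r}$, and this equals $|P_2|$ by uniqueness of the normal form. So $\gamma_2(F)=P_2$.

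For $\gamma_3$, we bound $\gamma_2(F)/\gamma_3(F)$ from below. Apply the graded-Lie functor to the surjection of $F$ onto the Heisenberg-type group $F/\gamma_3(F)$. Concretely, $B(r,3)/\gamma_3$ surjects onto the free class-$2$ exponent-$3$ group, whose commutator subgroup is $\Lambda^2$ of rank $\binom r2$. That group can be built explicitly on $\mathbb F_3^r\times\Lambda^2\mathbb F_3^r$ with $(u,\alpha)(v,\beta)=(u+v,\alpha+\beta+\tfrac12 u\wedge v)$, which has exponent $3$ since $2$ is invertible mod $3$. Therefore $|\gamma_2/\gamma_3|\ge 3^{\binom r2}$, so $|\gamma_3(F)|\le 3^{\binom r3}=|P_3|$. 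Together with $P_3\subseteq\gamma_3(F)$ this gives equality.

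It follows that $\gr_1,\gr_2,\gr_3$ have the stated bases $\bar x_i$, $\overline{[x_i,x_j]}$ and $\overline{[x_i,x_j,x_k]}$. Hence each $\sigma_n$ is a well-defined linear isomorphism, since it sends a basis to the standard basis of $\Lambda^nV$.

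It remains to show that $\sigma$ preserves brackets. Both $\gr(F)$ and $\Lambda^{[1,3]}V$ are generated in degree $1$, and brackets are bilinear. So it suffices to check $\sigma[a,b]=[\sigma a,\sigma b]$ for $a$ a degree-one basis element and $b$ a basis element of degree $1$ or $2$; the case of two degree-$2$ elements lands in degree $4$, where both sides are $0$.

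In degree $(1,1)$ we have $[\bar x_i,\bar x_j]=\overline{[x_i,x_j]}$, and by Fact~\ref{fact:elementary equations}(5) this equals $-\overline{[x_j,x_i]}$. This matches $x_i\wedge x_j=-x_j\wedge x_i$, and $[\bar x_i,\bar x_i]=0$ matches $x_i\wedge x_i=0$.

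For degrees $(2,1)$ and $(1,2)$ we must compute $\overline{[x_i,x_j,x_k]}$ for arbitrary indices in terms of the sorted basis, and show that it equals the sign of the sorting permutation times the basis element, and is $0$ when two indices repeat. The tools are the cyclic symmetry $[a,b,c]=[b,c,a]$, the antisymmetry $[a,b,c]=[b,a,c]^{-1}$, and the $2$-Engel law $[a,b,b]=1$ from Fact~\ref{fact:elementary equations}. Cyclic permutations are even and transpositions are odd, so the degree-$3$ bracket is alternating; this matches $x_i\wedge x_j\wedge x_k$. The $(1,2)$ case then follows from antisymmetry of the Lie bracket, giving $-u\wedge v$, as in the definition of Example~\ref{example:Grassmann algebra}.

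I expect the main obstacle to be the lower bound on $\gr_2$, namely confirming that $\gamma_3(F)$ is no larger than $P_3$. The explicit class-$2$ model above is what handles it, and we must verify that model's exponent-$3$ law and its commutator computation carefully.
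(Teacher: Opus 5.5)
Your proof is correct and follows the same route as the paper. Both arguments read off bases of $\gr_1(F),\gr_2(F),\gr_3(F)$ from the Levi--van der Waerden normal form, conclude that each $\sigma_n$ is a linear isomorphism, and check the bracket on pairs of basis elements using alternation and the vanishing in total degree $\geq 4$. The difference is only in detail: the paper asserts without argument that the normal form yields these bases, whereas you prove it (counting for $\gamma_2(F)$, and the explicit class-$2$ quotient on $\mathbb F_3^r\times\Lambda^2\mathbb F_3^r$ to bound $|\gamma_3(F)|$). You also check explicitly that triple commutators are alternating for unsorted indices, which the paper leaves implicit.
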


\begin{proof}
    By Fact \ref{fact:Levi and van der Waerden}, the following are bases over
    $\mathbb F_3$:
    $$
    \{\overline{x_i}:0\leq i<r\}\quad\text{for }\gr_1(F),
    $$
    \[
    \{\overline{[x_i,x_j]}:i<j\}\quad\text{for }\gr_2(F),
    \]
    and
    \[
    \{\overline{[x_i,x_j,x_k]}:i<j<k\}\quad\text{for }\gr_3(F).
    \]
    Hence each $\sigma_n$ is a linear isomorphism.  The definition of the
    bracket gives
    \[
    \sigma_2([\overline{x_i},\overline{x_j}])=x_i\wedge x_j
    \]
    and
    \[
    \sigma_3([\overline{[x_i,x_j]},\overline{x_k}])
    =x_i\wedge x_j\wedge x_k.
    \]
    The brackets in the opposite order are determined by alternation, and all
    brackets of total degree at least $4$ vanish.  Thus $\sigma$ preserves the
    Lie bracket.
\end{proof}

\begin{remark}
The above result remains true when $F$ is generated by an infinite set $\{x_i\mid i< \kappa\}$ for some cardinal number $\kappa$. See Remark \ref{remark:infinite dim}. 
\end{remark}

Using this isomorphism, we identify $\gr_i(B(r,3))$ with $\Lambda^iV$ for
$i=1,2,3$.

Now let $G\in \mathcal V_3$ be a finite group generated by $\{g_i\mid 1\leq i\leq r\}\subset G$.
Then $G$ is a quotient of $F_{\mathcal V_3}(\{x_1,\ldots,x_r\})$, via the
homomorphism induced by $x_i\mapsto g_i$.
Hence, in order to understand $\gr(G)$, we need to know how to calculate the associated graded Lie algebra $\gr(F/N)$.

Now we calculate the associated graded Lie algebras of quotient groups.

Recall that
\[
\gr_i^G(N)
=\frac{(N\cap\gamma_i(G))\gamma_{i+1}(G)}{\gamma_{i+1}(G)}
\leq\gr_i(G)
\]
for $N\leq G$.
\begin{lemma}\label{lemma:gr of quotient}
    Let $G$ be a group of exponent $3$ and let $N\trianglelefteq G$.
    Then there is a canonical isomorphism $\gr_i(G/N)\cong \gr_i(G)/\gr_i^G(N)$.
\end{lemma}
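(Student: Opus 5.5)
The plan is to first compute the lower central series of the quotient and then pass to the graded pieces. Write $\pi:G\to G/N$ for the projection. Since $\pi$ is surjective, and commutator subgroups of surjective images are images of commutator subgroups, induction on $i$ gives $\gamma_i(G/N)=\pi(\gamma_i(G))=\gamma_i(G)N/N$.

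Consequently,
\[
\gr_i(G/N)=\frac{\gamma_i(G)N/N}{\gamma_{i+1}(G)N/N}\cong \frac{\gamma_i(G)N}{\gamma_{i+1}(G)N}.
\]
Consider the composite map
\[
\phi_i:\gamma_i(G)\to \gamma_i(G)N/\gamma_{i+1}(G)N,\qquad a\mapsto a\gamma_{i+1}(G)N.
\]
It is surjective, and $\gamma_{i+1}(G)$ lies in its kernel, so it induces a surjection $\gr_i(G)\to\gr_i(G/N)$. This induced map is exactly $\gr_i(\pi)$.

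The remaining step is to identify the kernel, which is the main point. An element $a\in\gamma_i(G)$ maps to $1$ if and only if $a\in\gamma_{i+1}(G)N$. By Dedekind's modular law, using $\gamma_{i+1}(G)\le\gamma_i(G)$,
\[
\gamma_i(G)\cap\gamma_{i+1}(G)N=\gamma_{i+1}(G)\,(\gamma_i(G)\cap N).
\]
Hence
\[
\ker\phi_i=(N\cap\gamma_i(G))\gamma_{i+1}(G),
\]
and its image in $\gr_i(G)$ is exactly $\gr_i^G(N)$ as defined in the Notation.

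By the first isomorphism theorem, $\gr_i(\pi)$ induces the canonical isomorphism $\gr_i(G)/\gr_i^G(N)\cong\gr_i(G/N)$. All maps are group homomorphisms between elementary abelian $3$-groups, so this is also an isomorphism of $\mathbb F_3$-vector spaces. Moreover, since the isomorphism is induced by $\gr(\pi)$, which respects brackets, the direct sum over $i$ is compatible with the Lie structure. The only step requiring care is the modular law identity. It holds because every element of $\gamma_{i+1}(G)N$ can be written as $cn$ with $c\in\gamma_{i+1}(G)\subseteq\gamma_i(G)$; so if $cn\in\gamma_i(G)$, then $n=c^{-1}(cn)\in\gamma_i(G)\cap N$.
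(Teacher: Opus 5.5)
Your proof is correct and follows the same route as the paper. Both arguments identify $\gamma_i(G/N)=\gamma_i(G)N/N$, pass to $\gamma_i(G)/(\gamma_i(G)\cap\gamma_{i+1}(G)N)$, and use the modular law $\gamma_i(G)\cap\gamma_{i+1}(G)N=\gamma_{i+1}(G)(\gamma_i(G)\cap N)$ to recognize the kernel as $\gr_i^G(N)$; you additionally justify the modular-law step and note that the isomorphism is induced by $\gr(\pi)$, which the paper leaves implicit.
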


\begin{proof}
    Since $\gamma_i(G/N)=\gamma_i(G)N/N$, 
    \[
    \gr_i(G/N)
    \cong
    \frac{\gamma_i(G)N}{\gamma_{i+1}(G)N}
    \cong
    \frac{\gamma_i(G)}{\gamma_{i}(G)\cap(\gamma_{i+1}(G)N)}
    =
    \frac{\gamma_i(G)}{\gamma_{i+1}(G)(\gamma_i(G)\cap N)}
    \cong
    \frac{\gr_i(G)}{\gr_i^G(N)}.
    \]
\end{proof}

\section{Main results}
\label{sec:main-results}
In this section we prove that $T_3$ has a model companion, assuming Proposition A, which is proved in the next section.

\begin{proposition}\label{proposition:bounded number of conjugates}
    For any $a\in G\in \mathcal V_3$,
    \[
    \llangle a\rrangle_G = \{a^{g_0}\cdots a^{g_{n-1}}\mid g_i\in G, n\leq 3\}.    
    \]
    In other words, every element in $\llangle a\rrangle_G$ is a product of at most $3$ conjugates of $a$. 
\end{proposition}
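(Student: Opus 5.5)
The plan is to write everything in the abelian group $N\coloneqq\llangle a\rrangle_G$ and reduce the statement to showing that a certain set of commutators is a subgroup. By Fact~\ref{fact:elementary equations}(6), $N$ is abelian. Since $G$ has exponent $3$, $N$ is an elementary abelian $3$-group, and I will write it additively as an $\mathbb F_3$-vector space. The normal closure $N$ is generated by the conjugates $a^g$. The key identity I will use is
\[
a^g=g^{-1}ag=a\cdot(a^{-1}g^{-1}ag)=a\,[a^{-1},g^{-1}],
\]
together with $[a^{-1},g^{-1}]=[a,g^{-1}]^{-1}=[a,g]$ from Fact~\ref{fact:elementary equations}(5). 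So every conjugate has the form $a\,[a,h]$ with $h=g$, and conversely every such element is a conjugate. Setting $C\coloneqq\{[a,h]\mid h\in G\}\subset N$, the conjugates of $a$ are exactly the elements $a c$ with $c\in C$.

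The main step, and the only place where exponent $3$ is used beyond abelianness of $N$, is to show that $C$ is a subgroup of $G$. For closure under inverses I will use Fact~\ref{fact:elementary equations}(5): $[a,h]^{-1}=[a,h^{-1}]$. For closure under products, Fact~\ref{fact:elementary equations}(8) gives
\[
[a,gh]=[a,g][a,h][a,g,h],\qquad\text{so}\qquad [a,g][a,h]=[a,gh][a,g,h]^{-1}.
\]
Using the cyclic symmetry and inversion rules in Fact~\ref{fact:elementary equations}(4),(5),(7), I will rewrite $[a,g,h]^{-1}$ as a single commutator $[a,[g,h]^{\pm1}]$. I would first check the sign carefully, expecting $[a,g,h]^{-1}=[a,[g,h]]$. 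Then Fact~\ref{fact:elementary equations}(8) applies once more:
\[
[a,gh][a,[g,h]]=[a,gh[g,h]]\,[a,gh,[g,h]]^{-1}.
\]
Here the last factor is a commutator of weight at least $4$ in the generators, so it is trivial by Fact~\ref{fact:elementary equations}(1). Hence $[a,g][a,h]=[a,gh[g,h]]\in C$. If the sign bookkeeping in the middle step goes wrong, I would replace $[g,h]$ by $[g,h]^{-1}$; this is harmless because $C$ is already closed under inverses.

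With $C$ a subgroup, the counting is immediate. Every element of $N$ is a product of conjugates $ac_i$, so by commutativity it equals $a^k c$ with $k\in\{0,1,2\}$ and $c\in C$.
\begin{enumerate}
\item If $k=1$, then $ac$ is itself a single conjugate.
\item If $k=2$, then $a^2c=(ac)(a\cdot1)$ is a product of two conjugates, since $1=[a,1]\in C$.
\item If $k=0$, then $c=a^3c=(ac)(a)(a)$ is a product of three conjugates.
\end{enumerate}
This shows that $\llangle a\rrangle_G$ is contained in the set of products of at most $3$ conjugates of $a$. The reverse inclusion is trivial. I expect the subgroup property of $C$, specifically the correct handling of the weight-$3$ correction term, to be the only genuinely delicate point.
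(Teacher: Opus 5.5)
Your proof is correct and is essentially the paper's argument. The paper also uses $a^g=a[a,g]$, shows that $\{a^k[a,g]\}$ is a subgroup via the identity $[a,g][a,g']=[a,gg'[g,g']]$, and does the same three-case count; you additionally derive that identity from Fact~\ref{fact:elementary equations}(8) with the correct sign $[a,g,h]^{-1}=[a,[g,h]]$, and you use the abelianness of $\llangle a\rrangle_G$ where the paper only uses $a[a,g]=[a,g]a$.
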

\begin{proof}
Let $E_a=\{a^k[a,g]\mid g\in G, k\in\mathbb F_3\}\subset \llangle a\rrangle$.
Because $[a,g][a,g']=[a,gg'[g,g']]$, $[a,g]^{-1}=[a,g^{-1}]$ and $a[a,g]=[a,g]a$, $E_a$ is a subgroup.
Since $a^g=g^{-1}ag=a[a^{-1},g^{-1}]=a[a,g]$, $E_a=\llangle a\rrangle_G$.
Since $a[a,g]=a^g$, $a^2[a,g]=aa^g$, and $[a,g]=a^{-1}a^g=a^2a^g$, every element in $E_a$ is a product of at most $3$ conjugates of $a$.
\end{proof}

\begin{lemma}\label{lemma:witness in bdd support}
    Let $B,G\leq H\in \mathcal V_3$, $d(B)\leq m$, $\langle B,G\rangle = H$ and $\Delta\subset H$ with $|\Delta|\leq n$.
    Then for any $h\in \llangle \Delta \rrangle_H$,
    there is $C\leq G$ such that
    \begin{enumerate}
        \item $d(C)\leq 3(m+1)n$,
        \item $h\in \llangle \Delta \rrangle_{H_0}$ where $H_0=\langle C,B,\Delta\rangle\leq H$.
    \end{enumerate}
    
\end{lemma}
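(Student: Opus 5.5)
Write $\Delta=\{\delta_1,\dots,\delta_n\}$. The plan is to reduce to one normal closure at a time and then absorb each conjugating element into a boundedly generated subgroup of $G$.

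\textbf{Step 1: bounded product of conjugates.} The normal closure $\llangle\Delta\rrangle_H$ is the product $\llangle\delta_1\rrangle_H\cdots\llangle\delta_n\rrangle_H$, since a product of normal subgroups is normal. So any $h\in\llangle\Delta\rrangle_H$ can be written as $h=h_1\cdots h_n$ with $h_k\in\llangle\delta_k\rrangle_H$. By Proposition \ref{proposition:bounded number of conjugates}, each $h_k$ is a product of at most $3$ conjugates $\delta_k^{g}$ with $g\in H$. Hence
\[
h=\prod_{l<3n}\delta_{k_l}^{g_l}
\]
for some $g_l\in H$, with at most $3n$ conjugating elements in total.

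\textbf{Step 2: splitting the conjugators.} Since $H=\langle B,G\rangle$, each $g_l$ is a word in $B$ and $G$, but the number of $G$-letters in it is not obviously bounded. The point is to bound the generators of $G$ needed, not the letters. I would use the fact that $H$ is locally finite (exponent $3$) together with the coproduct structure. Let $b_1,\dots,b_m$ generate $B$. The plan is to show that every $g\in\langle B,G\rangle$ lies in $\langle B,c_0,c_1,\dots,c_m\rangle$ for suitable $c_0,\dots,c_m\in G$, which gives $m+1$ elements of $G$ per conjugator. For this I would work in $G*_{\mathcal V_3}B$, which maps onto $H$, and use the normal form there. Writing $g$ modulo $\gamma_2$ as $c_0$ times an element of $B$, the corrections in $\gamma_2$ and $\gamma_3$ are products of commutators $[c,b_i]$ and $[c,b_i,b_j]$. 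Here I would use Fact \ref{fact:elementary equations}(8),(9) to collect commutators into the form $[c_i,b_i]$ with one $c_i\in G$ per generator $b_i$, plus pure $G$-commutators absorbed into $c_0$.

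\textbf{Step 3: assembling $C$.} Let $C$ be generated by the $c$'s coming from all $3n$ conjugators, so $d(C)\le 3n(m+1)$. Then every $g_l$ lies in $H_0=\langle C,B,\Delta\rangle$, and the displayed product exhibits $h\in\llangle\Delta\rrangle_{H_0}$.

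\textbf{Main obstacle.} The hard step is Step 2: showing that each conjugator needs only $m+1$ elements of $G$. The fallback uses the structure of conjugation: by Proposition \ref{proposition:bounded number of conjugates}, $\delta^g=\delta[\delta,g]$. Writing $g=c\beta$ with $c\in G$ modulo terms that can be pushed into commutators, and expanding $[\delta,c\beta]$ via Fact \ref{fact:elementary equations}(8), I would aim to show that one only needs $g$ modulo the centralizer of $\delta$. Failing that, I would look for a representative of $g$ in a coset of $C_H(\delta)$ that is expressible with $m+1$ elements of $G$.
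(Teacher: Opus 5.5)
There is a genuine gap in Step~2, because the claim you set out to prove there is false. You want every element of $H=\langle B,G\rangle$ to lie in $\langle B,c_0,\dots,c_m\rangle$ for some $c_0,\dots,c_m\in G$. The terms $[c,b_i,b_j]$ you mention cannot be absorbed into the factors $[c_i,b_i]$: they live in the block $\gr_1(G)\otimes\gr_2(B)$ of $\gr_3$, and their $G$-parts need not lie in the span of the $\gr_1(c_i)$. For instance, let $G=F_{\mathcal V_3}(x_1,x_2,x_3,x_4)$, $B=F_{\mathcal V_3}(b_1,b_2)$ (so $m=2$), $H=G*_{\mathcal V_3}B$, and
\[
u=x_4\,[x_1,b_1]\,[x_2,b_2]\,[x_3,[b_1,b_2]].
\]
Suppose $u\in\langle B,c_0,c_1,c_2\rangle$. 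Map $H$ onto $G^{\mathrm{ab}}*_{\mathcal V_3}B$ and write $\bar x_i$ for the image of $x_i$. The image of $u$ lies in $U*_{\mathcal V_3}B$, where $U\le G^{\mathrm{ab}}$ is spanned by the images of $c_0,c_1,c_2$. By Lemma~\ref{lemma:free-product-amalgam}, Proposition~\ref{proposition:gr of free product} and Proposition~\ref{proposition:gr(f) and LCS}, this is an $\LCS$-strict subgroup whose graded pieces are $U\oplus\gr_1(B)$, $(U\otimes\gr_1(B))\oplus\gr_2(B)$ and $(U\otimes\gr_2(B))\oplus\gr_3(B)$. Reading off degrees $1$, $2$, $3$ in turn forces $\bar x_4$, then $\bar x_1,\bar x_2$, then $\bar x_3$ into $U$, contradicting $\dim U\le 3$. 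So exact membership needs extra elements of $G$ for the $[c,b_i,b_j]$ terms (up to $\binom m2$ per conjugator), and your count $3(m+1)n$ cannot be reached this way. For the same reason, your Step~3 assertion that every $g_l$ lies in $H_0$ is unjustified.

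The missing idea is the one your fallback only gestures at. You never need the conjugators themselves, only their classes modulo a subgroup that centralizes $\Delta$, and the right subgroup is $\gamma_3(H)$. Since $\gamma_3(H)\le Z(H)$, one has $\delta^u=\delta^v$ whenever $u\equiv v\pmod{\gamma_3(H)}$. In the class-$2$ quotient $H/\gamma_3(H)$ all triple commutators vanish, including the troublesome $[c,b_i,b_j]$. Moving $G$-letters to the left and using bilinearity of commutators then gives $u\equiv c_0\,b\,\prod_{j=1}^m[c_j,b_j]$ with $c_0,\dots,c_m\in G$ and $b\in B$. Replacing each $g_l$ by such a $v_l\in\langle C,B\rangle$ yields $h=\prod_l\delta_{k_l}^{v_l}\in\llangle\Delta\rrangle_{H_0}$ with $d(C)\le 3(m+1)n$. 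This is exactly the paper's argument; your Step~1 and the overall counting are otherwise the same as there.
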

\begin{proof}
Let $\Delta=\{\delta_i\mid i<r\}$ with $r\leq n$. Then $\llangle \Delta \rrangle_H=\llangle \delta_0\rrangle_H\cdots\llangle\delta_{r-1}\rrangle_H$, so it suffices to prove the claim for $\Delta=\{\delta\}$.

By Proposition \ref{proposition:bounded number of conjugates}, $h=\prod_{i<k}\delta^{u_i}$ for some $u_i\in H$ and $k\leq 3$.
Since $H=\langle G,B\rangle$, every $u\in H$ is expressed as
\[
    u = gb\prod_{j<m}[g_j,b_j]^{k_j} \mod \gamma_3(H)
    \]
for some $g,g_j\in G$, $b\in B$ and $k_j\in \mathbb F_3$ where $B=\langle b_j\mid j<m\rangle$.
Here, we used that $[gg',b]=[g,b][g',b][g,g',b]^{-1}$, $[b,g]=[g,b]^{-1}$ and $[H,H]$ is abelian.
Because $\gamma_3(H)\leq Z(H)$, $\delta^u=\delta^v$ where $u=v \mod\gamma_3(H)$.

Hence $C$ is obtained by collecting all $g,g_j\in G$ in the above expression for each $u_i$ $(i<k)$ as generators of $C$.

This argument shows that $C$ is generated by at most $k(m+1)|\Delta|\leq k(m+1)n\leq 3(m+1)n$ elements.
\end{proof}

A proof of the next proposition is given in the next section as Proposition \ref{proposition:bdd LCS}.
\begin{customproposition}{A}
    There is a function $f_0:\mathbb N\to\mathbb N$ such that for every $n\in\mathbb N$, every e.c. model $M$ of $T_3$, and every $A\leq M$ with $d(A)\leq n$, there is $A\leq D\leq_{\LCS}M$ such that $D$ is generated by at most $f_0(n)$ elements.
\end{customproposition}

\begin{theorem}[Main theorem]
\label{thm:main}
There is a function $f:\mathbb N\to \mathbb N$ satisfying the following:
Let $T_3$ be the theory of groups of exponent $3$ and $M$ an existentially closed model of $T_3$.
Let $A\leq M$ be a finite subgroup and let $A\leq B\in\mathcal V_3$ be such that $d(B)\leq m$.
If $M$ and $B$ do not have an amalgam in $\mathcal V_3$ over $A$, then there is $A\leq D\leq M$ such that 
\begin{enumerate}
    \item $D$ and $B$ do not have an amalgam in $\mathcal V_3$ over $A$.
    \item $d(D)\leq f(m)$.
\end{enumerate}
\end{theorem}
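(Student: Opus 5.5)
The plan is to work in the coproduct $H\coloneqq M*_{\mathcal V_3}B$ and turn the failure of amalgamation into a statement about a normal closure. Write $A$ for its copy in $M$ and $A'$ for its copy in $B$. Let $\Delta=\{a\,(a')^{-1}\mid a\in \text{a generating set of }A\}$, and set $N=\llangle\Delta\rrangle_H$. The amalgamated coproduct of $M$ and $B$ over $A$ in $\mathcal V_3$ is $H/N$. Since $\mathcal V_3$ is a variety, $M$ and $B$ have an amalgam over $A$ if and only if the natural maps $M\to H/N$ and $B\to H/N$ are both injective, i.e.\ $N\cap M=1$ and $N\cap B=1$. So, assuming no amalgam exists, there is a witness $1\neq h\in N\cap(M\cup B)$. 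Since $A\le B$, we have $d(A)\le$ some bound in terms of $m$; to be safe we replace $A$'s generating set by one of size at most $d(A)$. We can also bound $d(A)$ by $\log_3|B|$, and $|B|$ is bounded in terms of $m$ by Fact~\ref{fact:Levi and van der Waerden}. Hence $|\Delta|\le n(m)$ for some function $n(m)$.

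Next, apply Lemma~\ref{lemma:witness in bdd support} with $G=M$, the group $B$, this $\Delta$, and the witness $h$. This yields $C\le M$ with $d(C)\le 3(m+1)n(m)$ and $h\in\llangle\Delta\rrangle_{H_0}$, where $H_0=\langle C,B,\Delta\rangle=\langle C,A,B\rangle$. If $h\in M$ rather than $B$, we also adjoin $h$ itself to $C$, which costs one more generator. Now apply Proposition A to $\langle C,A,h\rangle$, whose number of generators is bounded by a function of $m$. This produces $D$ with $\langle C,A,h\rangle\le D\le_{\LCS}M$ and $d(D)\le f(m)\coloneqq f_0\bigl(3(m+1)n(m)+n(m)+1\bigr)$.

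The key step is to show that the natural map $\iota:D*_{\mathcal V_3}B\to M*_{\mathcal V_3}B$ is injective. By Proposition~\ref{proposition:gr(f) and LCS}(1) it suffices to show that $\gr(\iota)$ is injective. For this I would describe the graded Lie algebra of a coproduct $G*_{\mathcal V_3}B$ functorially in terms of $\gr(G)$ and $\gr(B)$. Concretely, present $G$ and $B$ as quotients of free groups and use Lemma~\ref{lemma:gr of quotient} together with Proposition~\ref{proposition:gr(F) is Grassmann algebra}. The free group on the disjoint union of the generators has $\gr$ equal to $\Lambda^{[1,3]}(V_G\oplus V_B)$. This decomposes into blocks indexed by how many factors come from each side. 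The relations from $G$ and from $B$ generate a block-homogeneous ideal, so the quotient is computed block by block. The mixed blocks should be $\gr_1(G)\otimes\gr_1(B)$ in degree $2$, and in degree $3$ suitable quotients of $\Lambda^2$-type terms tensored with $\gr_1$ of the other side. The pure blocks are $\gr(G)$ and $\gr(B)$. Each block is then a functor of $\gr(G)$ that preserves injections, and injectivity of $\gr(D)\to\gr(M)$ holds by strictness via Proposition~\ref{proposition:gr(f) and LCS}(2). I expect this computation, in particular identifying the degree-$3$ mixed blocks and checking that injectivity is preserved there (e.g.\ the term $\gr_2(G)\otimes\gr_1(B)$ modulo its interaction with $\Lambda^2\gr_1(G)\otimes\gr_1(B)$), to be the main obstacle. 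It is exactly where $\le_{\LCS}$, and not mere inclusion, is needed.

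Granting injectivity, identify $D*_{\mathcal V_3}B$ with its image in $H$. This image contains $H_0$, since $C,A\le D$. Therefore $h\in\llangle\Delta\rrangle_{H_0}\subseteq\llangle\Delta\rrangle_{D*_{\mathcal V_3}B}$. Moreover $h\ne1$ lies in $D$ or in $B$, so in $D*_{\mathcal V_3}B$ one of the factors $D$, $B$ meets the normal closure of $\Delta$ nontrivially. Hence $D$ and $B$ have no amalgam over $A$, and $d(D)\le f(m)$, as required.
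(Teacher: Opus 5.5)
Your proposal is correct and follows essentially the same route as the paper. Both arguments use the same normal-closure reformulation of $\Delta$ inside $M*_{\mathcal V_3}B$, the bounded witness from Lemma~\ref{lemma:witness in bdd support}, the envelope $\langle C,A,h\rangle\le D\le_{\LCS}M$ from Proposition A, and the transfer of the witness through the embedding $D*_{\mathcal V_3}B\hookrightarrow M*_{\mathcal V_3}B$. The step you flag as the main obstacle is exactly the paper's Proposition~\ref{proposition:gr of free product} and Lemma~\ref{lemma:free-product-amalgam}, and it works out as you anticipate: the mixed degree-$3$ part of the relation ideal is $R_G\wedge V_B\cong R_G\otimes V_B$, where $R_G\subseteq\Lambda^2V_G$ is the degree-$2$ image of the relations of $G$. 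So the mixed blocks are $\gr_2(G)\otimes\gr_1(B)$ and $\gr_1(G)\otimes\gr_2(B)$, which are natural in $G$ and therefore preserve injectivity over $\mathbb F_3$.
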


\begin{proof}
Suppose that $A\leq M$ and $A\leq B$ do not have an amalgam over $A$. 
Let $H=M*_{\mathcal V_3}B$ and let $j_M:M\to H$ and $j_B:B\to H$ be canonical embeddings.
Let $d(A)=n$ and let $A$ be generated by $\{a_0,\cdots, a_{n-1}\}$. 
Put $\Delta=\{\delta_{a_i}\mid i< n\}\subset H$ where $\delta_a=j_M(a)j_B(a)^{-1}$.
By the assumption, at least one of the canonical homomorphisms $\bar j_M:M\to H/\llangle \Delta\rrangle_H$ and $\bar j_B:B\to H/\llangle \Delta \rrangle_H$ is not injective; equivalently, at least one of the following holds:
\begin{itemize}
    \item There is $1\neq g\in M$ such that $j_M(g)\in \llangle \Delta \rrangle_H$.
    \item There is $1\neq g\in B$ such that $j_B(g)\in \llangle \Delta \rrangle_H$.
\end{itemize}
Since the other case is similar, we assume $g\in M$ and $j_M(g)\in \llangle \Delta \rrangle_H$.

Recall that $d(A)=n$ and $d(B)\leq m$.
By Lemma \ref{lemma:witness in bdd support},
there is $C_0\leq M$ generated by $3(m+1)n$ elements such that, by letting $H_0=\langle j_M(C_0),j_B(B),\Delta\rangle \leq H$, $j_M(g)\in \llangle \Delta\rrangle_{H_0}$.
Let $C=\langle C_0,A,g\rangle_M$, which is generated by $3(m+1)n+n+1$ elements.
Since $n= d(A)\leq \log_3(|A|) \leq \log_3(|B|) \leq t(m)$ where $t(m)=m+\binom m2+\binom m3$,
we have $3(m+1)n+n+1\leq (3m+4)t(m)+1$.
By Proposition A, we can find $C\leq D\leq_\LCS M$ such that $D$ is generated by at most $f(m):=f_0((3m+4)t(m)+1)$ elements.

\begin{claim}
$D$ and $B$ do not have an amalgam in $\mathcal V_3$ over $A$.
\end{claim}
Since $D\leq_\LCS M$, a natural homomorphism $D*_{\mathcal V_3}B \to M*_{\mathcal V_3}B$ is an embedding, so we identify $D*_{\mathcal V_3}B$ as a subgroup of $M*_{\mathcal V_3}B$.
Since $j_M|D$ agrees with the natural embedding $j_D:D\to D*_{\mathcal V_3}B\leq M*_{\mathcal V_3}B$ and $H_0\leq \langle j_D(D),j_B(B)\rangle\leq D*_{\mathcal V_3}B$, we know $1\neq j_D(g)=j_M(g)\in \llangle \Delta\rrangle_{D*_{\mathcal V_3}B}\leq D*_{\mathcal V_3}B$.
Hence $D\to D*_{\mathcal V_3}B/\llangle \Delta\rrangle_{D*_{\mathcal V_3}B}$ is not an embedding. This shows that $A\leq D$ and $A\leq B$ do not have an amalgam over $A$.
\end{proof}

By Fact \ref{fact:locally finiteness and model companion}, we have the following.
\begin{corollary}
    Let $T_3$ be the theory of groups of exponent $3$. Then $T_3$ has a model companion.
\end{corollary}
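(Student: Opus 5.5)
The plan is to apply Fact~\ref{fact:locally finiteness and model companion} directly, with the uniform bound coming from the Main theorem (\cref{thm:main}). There are three things to check: the hypotheses of the Fact, the translation of condition~(2) into group-theoretic language, and the choice of the bound $n_{A,B}$.

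\emph{Hypotheses of the Fact.} The language $\mathcal L_{\mathrm{grp}}=\{\cdot,{}^{-1},1\}$ is finite. The theory $T_3$ consists of universal sentences (the group axioms and $\forall x\,(x^3=1)$), so it is trivially a $\Pi_2$-theory. For local finiteness, let $M\models T_3$ and let $S\subset M$ be finite with $|S|=r$. Then $\langle S\rangle$ is a homomorphic image of $B(r,3)$. By Fact~\ref{fact:Levi and van der Waerden}, $|B(r,3)|=3^{t(r)}$ is finite, so $\langle S\rangle$ is finite. In this language the substructure generated by a set is exactly the subgroup it generates. Hence ``generated by at most $k$ elements as an $L$-substructure'' means the same as $d(\cdot)\le k$.

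\emph{Translating condition (2).} Since $T_3$ axiomatizes $\mathcal V_3$, the phrase ``an amalgam in $T$'' means ``an amalgam in $\mathcal V_3$''. Let finite $A\subset B$ be substructures of some $M_0\models T_3$. Then $B$ is a finite group of exponent~$3$, so $d(B)\le\log_3|B|$ is finite. Set
\[
n_{A,B}\coloneqq f(d(B)),
\]
where $f$ is the function from \cref{thm:main}. Now let $M$ be an e.c.\ model with $A\le M$, and suppose $M$ and $B$ have no amalgam over $A$. Then \cref{thm:main}, applied with $m=d(B)$, yields $A\le D\le M$ such that $d(D)\le f(m)=n_{A,B}$ and $D$, $B$ have no amalgam in $\mathcal V_3$ over $A$. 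Taking $C=D$ verifies (a) and (b) of condition~(2), so by the Fact, $T_3$ has a model companion.

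\emph{Expected obstacle.} All of the real difficulty has already been absorbed into \cref{thm:main}, and through it into Proposition~A. The one point needing care is that $n_{A,B}$ must depend only on $A$ and $B$, not on $M$. This holds because $f$ is a function of $m=d(B)$ alone, and $d(A)\le t(m)$ is already accounted for inside the proof of \cref{thm:main}. I would also note that the two senses of ``$A\subset M$'' agree: in the Fact, $A$ is identified with its isomorphic copy in $M$, exactly as in the theorem.
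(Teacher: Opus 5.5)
Your proposal is correct and follows the paper's route exactly: you apply Fact~\ref{fact:locally finiteness and model companion} with $n_{A,B}=f(d(B))$, where $f$ comes from Theorem~\ref{thm:main}. Your explicit check of the Fact's hypotheses (finite language, $T_3$ universal and hence $\Pi_2$, local finiteness via Levi--van der Waerden) just spells out what the paper leaves implicit in its one-line derivation.
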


\section{Structural analysis}
\label{sec:structure}

We first construct a relatively free presentation of $G$.

Let $G$ be a group of exponent $3$.
Since $V_G=G/\gamma_2(G)$ is abelian, it is a vector space over $\mathbb F_3$.
Let $\mathcal I=\{e_i\mid i\in I\}$ be a basis of $V_G$.  For each $i\in I$,
choose a representative $a_i\in G$ such that
$e_i=a_i\gamma_2(G)$.

\begin{proposition}\label{proposition:lift}
    The assignment $e_i\mapsto a_i$ induces a surjective homomorphism
    $\varphi_G:F_{\mathcal V_3}(\mathcal I)\to G$ such that
    $\ker(\varphi_G)\leq\gamma_2(F_{\mathcal V_3}(\mathcal I))$.
\end{proposition}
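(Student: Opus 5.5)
The plan is to apply the universal property of the relatively free group $F=F_{\mathcal V_3}(\mathcal I)$, then prove surjectivity via the standard fact that elements generating a group modulo its derived subgroup already generate a nilpotent group, and finally read off the kernel condition from the induced map on abelianizations.

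First, since $G\in\mathcal V_3$ and $F$ is free in $\mathcal V_3$ on $\mathcal I$, the assignment $e_i\mapsto a_i$ extends uniquely to a homomorphism $\varphi_G:F\to G$.

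Second, let $K=\varphi_G(F)=\langle a_i\mid i\in I\rangle\leq G$. Because the $e_i$ span $V_G=G/\gamma_2(G)$, we have $K\gamma_2(G)=G$. I would show inductively that $K\gamma_{j}(G)=G$ for $j=2,3,4$. Suppose $G=K\gamma_j(G)$. Then $\gamma_j(G)$ is generated modulo $\gamma_{j+1}(G)$ by commutators $[x_1,\ldots,x_j]$ with $x_l\in G$. Writing each $x_l=k_l c_l$ with $k_l\in K$ and $c_l\in\gamma_2(G)$, multilinearity modulo $\gamma_{j+1}(G)$ (the identities in Fact~\ref{fact:elementary equations}(8)(9) together with $[\gamma_a,\gamma_b]\leq\gamma_{a+b}$) gives
\[
[x_1,\ldots,x_j]\equiv[k_1,\ldots,k_j]\pmod{\gamma_{j+1}(G)}.
\]
Hence $\gamma_j(G)\leq K\gamma_{j+1}(G)$, and so $G=K\gamma_{j+1}(G)$. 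Since $\gamma_4(G)=1$, we get $G=K$, so $\varphi_G$ is surjective. This is the only step with real content, and it is the classical argument; I would keep it brief.

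Third, for the kernel condition, consider the map $\gr_1(\varphi_G):F/\gamma_2(F)\to G/\gamma_2(G)$. By Proposition~\ref{proposition:gr(F) is Grassmann algebra} and its infinite-rank remark, $F/\gamma_2(F)$ has basis $\{\overline{e_i}\}$. The map sends $\overline{e_i}$ to $a_i\gamma_2(G)=e_i$, which is a basis of $V_G$, so $\gr_1(\varphi_G)$ is a linear isomorphism and in particular injective. Now take $w\in\ker\varphi_G$. Then $\gr_1(\varphi_G)(w\gamma_2(F))=\varphi_G(w)\gamma_2(G)$ is trivial, so $w\gamma_2(F)=0$, that is, $w\in\gamma_2(F)$. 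Therefore $\ker(\varphi_G)\leq\gamma_2(F)$.
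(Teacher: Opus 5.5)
Your proof is correct and follows the same outline as the paper's: you use the universal property of $F_{\mathcal V_3}(\mathcal I)$, you reduce surjectivity to showing that $K\gamma_2(G)=G$ forces $K=G$, and you get the kernel condition from injectivity of the induced map $F/\gamma_2(F)\to G/\gamma_2(G)$. The only difference is in how you show $K=G$: you run the general induction $G=K\gamma_j(G)$ for $j=2,3,4$, using multilinearity of weight-$j$ commutators modulo $\gamma_{j+1}(G)$, whereas the paper does a direct two-step computation ($\gamma_2(G)\leq H\gamma_3(G)$, then $[H,\gamma_2(G)]\leq[H,H]\leq H$) using that $\gamma_2(G)$ is abelian and $\gamma_3(G)$ is central; your version is the standard argument and works in any nilpotent group.
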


\begin{proof}
    Let $F=F_{\mathcal V_3}(\mathcal I)$ and
    $H=\langle a_i\mid i\in I\rangle\leq G$.  Since the cosets
    $a_i\gamma_2(G)$ form a basis of $G/\gamma_2(G)$, we have
    $G=H\gamma_2(G)$.

    We claim that $H=G$.  Since $\gamma_2(G)$ is abelian and
    $\gamma_3(G)\leq Z(G)$, the commutator identities give
    \[
    \gamma_2(G)
    =[H\gamma_2(G),H\gamma_2(G)]
    \leq H[H,\gamma_2(G)]
    \leq H\gamma_3(G).
    \]
    Consequently,
    \[
    [H,\gamma_2(G)]
    \leq[H,H\gamma_3(G)]
    =[H,H]
    \leq H,
    \]
    where we used the centrality of $\gamma_3(G)$.  It follows that
    $\gamma_2(G)\leq H[H,\gamma_2(G)]\leq H$, and hence
    $G=H\gamma_2(G)=H$.  Thus $\varphi_G$ is surjective.

    The induced map
    \[
    \overline{\varphi}_G:F/\gamma_2(F)\longrightarrow G/\gamma_2(G)
    \]
    sends the basis $\mathcal I$ bijectively to the basis
    $\{a_i\gamma_2(G):i\in I\}$, so it is an isomorphism.  If
    $a\in\ker(\varphi_G)$, then
    $a\gamma_2(F)\in\ker(\overline{\varphi}_G)=0$.  Therefore
    $a\in\gamma_2(F)$, as required.
\end{proof}

\begin{remark}\label{remark:G=H}
    The above argument shows that, in general, if $H\leq G$ and $G=H\gamma_2(G)$, then $G=H$.
\end{remark}

\begin{lemma}\label{lemma:gr of normal closure}
    Let $G$ be a group of exponent $3$, let $K\leq\gamma_2(G)$, and let
    $L=\llangle K\rrangle_G$ be the normal closure of $K$ in $G$.
    Then
    \begin{enumerate}
        \item $L=K[K,G]$ and $L\leq \gamma_2(G)$,
        \item $\gr_2^G(L)=\gr_2^G(K)=\{\gr_2(a)\in \gr_2(G)\mid a\in K\cap \gamma_2(G)\}$,
        \item $\gr_3^G(L)=\gr_3^G(K)+[\gr_2^G(K),\gr_1(G)]$.
    \end{enumerate}
\end{lemma}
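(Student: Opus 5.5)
For part (1), I will show that $K[K,G]$ is a normal subgroup of $G$ containing $K$; since every normal subgroup containing $K$ contains $K[K,G]$ (as $k^g=k[k^{-1},g^{-1}]$), this gives $L=K[K,G]$. Since $K\leq\gamma_2(G)$, Fact \ref{fact:elementary equations} gives $[K,G]\leq\gamma_3(G)\leq Z(G)$, and $\gamma_2(G)$ is abelian. Hence $K[K,G]$ is a subgroup, being a product of a subgroup with a central subgroup. It is normal because $k^g=k[k^{-1},g^{-1}]\in K[K,G]$, while elements of $[K,G]$ are central and so fixed by conjugation. The inclusion $L\leq\gamma_2(G)$ is then clear, since both $K$ and $[K,G]\leq\gamma_3(G)$ lie in $\gamma_2(G)$.

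For part (2), note that $L\cap\gamma_2(G)=L=K[K,G]$ and $[K,G]\leq\gamma_3(G)$. Therefore
\[
(L\cap\gamma_2(G))\gamma_3(G)=K\gamma_3(G)=(K\cap\gamma_2(G))\gamma_3(G),
\]
which yields $\gr_2^G(L)=\gr_2^G(K)$. The second equality in (2) is just the definition, using $K\cap\gamma_2(G)=K$.

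Part (3) is the main step. Since $\gamma_4(G)=1$, we have $\gr_3(G)=\gamma_3(G)$ and $\gr_3^G(L)=L\cap\gamma_3(G)$. First, every element of $L$ has the form $kc$ with $k\in K$ and $c\in[K,G]\leq\gamma_3(G)$. Such an element lies in $\gamma_3(G)$ exactly when $k\in K\cap\gamma_3(G)$. Consequently
\[
L\cap\gamma_3(G)=(K\cap\gamma_3(G))\,[K,G].
\]
It remains to identify $[K,G]$ with $[\gr_2^G(K),\gr_1(G)]$ inside $\gr_3(G)=\gamma_3(G)$. Because $[K,G]$ lands in the central, exponent-$3$ subgroup $\gamma_3(G)$, the map $(k,g)\mapsto[k,g]$ on $K\times G$ is bimultiplicative. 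This follows from Fact \ref{fact:elementary equations}(8),(9): the correction terms are quadruple commutators, hence trivial. Moreover, $[k,g]$ depends only on $k\gamma_3(G)$ and $g\gamma_2(G)$, because commutators of weight $\geq4$ vanish.

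So $[K,G]$ is exactly the subgroup generated by the brackets $[\gr_2(k),\gr_1(g)]$ with $k\in K$ and $g\in G$. Since the bracket is bilinear, this subgroup is the span $[\gr_2^G(K),\gr_1(G)]$. Combining this with the expression for $L\cap\gamma_3(G)$ above gives (3). The one point that needs care is the bimultiplicativity and well-definedness check; I expect the elementary identities in Fact \ref{fact:elementary equations} to handle it directly.
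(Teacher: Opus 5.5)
Your proposal is correct and follows essentially the same route as the paper. Both arguments get $L=K[K,G]$ from the centrality of $[K,G]\leq\gamma_3(G)$, and both read off (2) and (3) from $L\cap\gamma_3(G)=(K\cap\gamma_3(G))[K,G]$ together with $[k\gamma_3(G),g\gamma_2(G)]=[k,g]$. You also spell out points the paper leaves implicit, namely that $K[K,G]$ is a subgroup, why the intersection formula holds, and that the bracket is well defined and bilinear.
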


\begin{proof}
    Since $K\leq\gamma_2(G)$, we have
    $[K,G]\leq\gamma_3(G)\leq Z(G)$.  For $k\in K$ and $g\in G$,
    \[
    k^g=[g^{-1},k]k\in K[K,G].
    \]
    Thus every conjugate of an element of $K$ belongs to $K[K,G]$.  The
    reverse inclusion follows because the normal closure of $K$ contains both
    $K$ and $[K,G]$.  Hence
    \[
    L=K[K,G]\leq\gamma_2(G).
    \]

    Since $[K,G]\leq\gamma_3(G)$, the images of $L$ and $K$ in
    $\gr_2(G)=\gamma_2(G)/\gamma_3(G)$ coincide.  This proves (2).

    Moreover,
    \[
    L\cap\gamma_3(G)=(K\cap\gamma_3(G))[K,G].
    \]
    For $k\in K$ and $g\in G$, the induced bracket is
    \[
    [k\gamma_3(G),g\gamma_2(G)]
    =[k,g]\gamma_4(G)=[k,g].
    \]
    Therefore the image of $[K,G]$ in $\gr_3(G)$ is exactly
    $[\gr_2^G(K),\gr_1(G)]$, which proves (3).
\end{proof}
The next proposition is a main tool to analyze group structures of exponent $3$ in this section.

\begin{proposition}\label{proposition:gr of free product}
    Let $G_i$ $(i=0,1)$ be groups of exponent $3$.
    \begin{enumerate}
        \item Then the following natural maps induced by the canonical embeddings $G_i\to G_0*_{\mathcal V_3}G_1$ $(i=0,1)$ are isomorphisms.
        \[
        \eta_1:\gr_1(G_0)\oplus \gr_1(G_1) \to \gr_1(G_0*_{\mathcal V_3}G_1)
        \]
        \[
        \eta_2:\gr_2(G_0)\oplus (\gr_1(G_0)\otimes\gr_1(G_1))\oplus \gr_2(G_1) \to \gr_2(G_0*_{\mathcal V_3}G_1)
        \]
        \[
        \eta_3:\gr_3(G_0)\oplus (\gr_2(G_0)\otimes\gr_1(G_1))\oplus (\gr_1(G_0)\otimes\gr_2(G_1)) \oplus \gr_3(G_1) \to \gr_3(G_0*_{\mathcal V_3}G_1)    
        \]
        where $\gr_i(G_0)\otimes \gr_j(G_1)\ni a\otimes b \xmapsto{\eta_{i+j}} [\eta_i(a),\eta_j(b)]\in \gr_{i+j}(G_0*_{\mathcal V_3}G_1)$. 

        \item Let us identify $\mathrm{dom}(\eta_i)$ with $\mathrm{Im}(\eta_i)$. Then, by letting $\mathcal B_{i,j}=\gr_i(G_0)\otimes \gr_j(G_1)$ for $i,j\geq 1$ with $i+j\leq 3$, $\mathcal B_{i,0}=\gr_i(G_0)$,  $\mathcal B_{0,j}=\gr_j(G_1)$, and
        $\mathcal B_{i,j}=0$ $(i+j\geq 4)$,
        \[
            [\mathcal B_{i,j},\mathcal B_{k,l}]\subset \mathcal B_{i+k,j+l}.
            \]
    \end{enumerate}    
\end{proposition}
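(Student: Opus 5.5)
The proof splits into surjectivity, well-definedness, and injectivity of the maps $\eta_i$. Injectivity is the real content, and I would get it by building a concrete group in $\mathcal V_3$ whose graded Lie algebra is exactly the proposed direct sum. Throughout, write $P=G_0*_{\mathcal V_3}G_1$.

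\textbf{Well-definedness and surjectivity.} The maps $\gr(G_i)\to\gr(P)$ induced by the canonical embeddings are Lie homomorphisms. On each tensor factor, the assignment $a\otimes b\mapsto[\eta_i(a),\eta_j(b)]$ is bilinear over $\mathbb F_3$, so it factors through the tensor product. Hence each $\eta_n$ is a well-defined linear map. For surjectivity:
\begin{itemize}
\item $\gr_1(P)=P/\gamma_2(P)$ is generated by the images of $G_0$ and $G_1$, so $\eta_1$ is onto.
\item $\gr(P)$ is generated by $\gr_1(P)$, so $\gr_2(P)=[\gr_1(P),\gr_1(P)]$. Expanding this with $\gr_1(P)=\gr_1(G_0)+\gr_1(G_1)$ and using antisymmetry shows $\eta_2$ is onto.
\item Similarly, $\gr_3(P)=[\gr_2(P),\gr_1(P)]$ expands into brackets of the form $[\mathcal B_{i,j},\mathcal B_{k,l}]$. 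The cross term $[\mathcal B_{1,1},\mathcal B_{1,0}]$ needs separate handling. The Jacobi identity gives
\[
[[a,b],a']=[[a,a'],b]+[a,[b,a']],
\]
with $a,a'\in\gr_1(G_0)$ and $b\in\gr_1(G_1)$, which lands in $\mathcal B_{2,1}$. The symmetric case lands in $\mathcal B_{1,2}$. So $\eta_3$ is onto.
\end{itemize}
These same Jacobi and antisymmetry computations prove part (2) once injectivity is known: bracketing a $\mathcal B_{i,j}$ element with a $\mathcal B_{k,l}$ element always lands in the image of $\mathcal B_{i+k,j+l}$.

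\textbf{Injectivity: the model group.} Let $L=\bigoplus_n L_n$ be the abstract graded space given by the domains of the $\eta_n$. Define a Lie bracket on $L$ that extends the brackets of $\gr(G_0)$ and $\gr(G_1)$, using the rules forced above:
\begin{itemize}
\item $[a,b]=a\otimes b$ for $a\in\gr_1(G_0)$ and $b\in\gr_1(G_1)$;
\item $[a\otimes b,a']=[a,a']\otimes b-a'\otimes[a,b]$, with the analogous rules on the other side;
\item all brackets of total degree at least $4$ are zero.
\end{itemize}
A Lie algebra alone does not bound the dimensions of $\gr(P)$ from below. I therefore need a group $Q\in\mathcal V_3$ receiving $G_0$ and $G_1$ with $\gr(Q)\cong L$ compatibly. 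The universal property then gives a map $P\to Q$, and composing $\eta_n$ with $\gr(P)\to\gr(Q)$ recovers the identity on $L_n$, which proves injectivity.

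\textbf{Injectivity: constructing $Q$.} Use Proposition~\ref{proposition:lift} to write $G_i=F_i/N_i$ with $F_i$ relatively free and $N_i\le\gamma_2(F_i)$. By the stated Fact, $F=F_{\mathcal V_3}(\mathcal I_0\sqcup\mathcal I_1)=F_0*_{\mathcal V_3}F_1$, and $P\cong F/\llangle N_0N_1\rrangle_F$. Proposition~\ref{proposition:gr(F) is Grassmann algebra} and its infinite remark identify $\gr(F)$ with $\Lambda^{[1,3]}(V_0\oplus V_1)$. This Grassmann algebra decomposes into blocks $\Lambda^iV_0\otimes\Lambda^jV_1$, which is exactly the $\mathcal B_{i,j}$ decomposition for free groups. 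Lemma~\ref{lemma:gr of normal closure} computes $\gr^F(\llangle N_0N_1\rrangle)$:
\begin{itemize}
\item in degree $2$, it is $\gr_2^F(N_0)\oplus\gr_2^F(N_1)$;
\item in degree $3$, it is $\gr_3^F(N_0N_1)+[\gr_2^F(N_0N_1),\gr_1(F)]$.
\end{itemize}
For the degree-$2$ claim I need that $\gr^F_n(N_0N_1)$ splits block-homogeneously as $\gr^{F_0}_n(N_0)\oplus\gr^{F_1}_n(N_1)$. This holds because $F_0$ and $F_1$ are retracts of $F$, and because $F_0\le_{\LCS}F$ follows from the normal-form Fact~\ref{fact:Levi and van der Waerden}. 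Bracketing with $\gr_1(F)=V_0\oplus V_1$ keeps the subspace block-homogeneous. It contributes $\gr_2^{F_0}(N_0)\wedge V_1$ to the $(2,1)$ block, plus in-block terms already contained in $\gr_3^{F_0}(N_0)$. Lemma~\ref{lemma:gr of quotient} and the block-decomposition proposition then compute $\gr_n(P)$ blockwise:
\begin{itemize}
\item the $(2,0)$ block is $\gr_2(G_0)$;
\item the $(1,1)$ block is $V_0\otimes V_1$;
\item the $(2,1)$ block is $(\Lambda^2V_0/\gr_2^{F_0}(N_0))\otimes V_1=\gr_2(G_0)\otimes\gr_1(G_1)$;
\item the remaining blocks are analogous.
\end{itemize}
This computes $\gr(P)$ directly, so the model group $Q$ can in fact be taken to be $P$ itself. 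The dimension count then matches the domain of $\eta_n$, and with surjectivity this gives injectivity.

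\textbf{Main obstacle.} The hard step is verifying block-homogeneity of $\gr_n^F(\llangle N_0N_1\rrangle)$. In particular, one must check that
\[
\gr_3^F(N_0)=\gr_3^{F_0}(N_0)
\]
sits inside the $(3,0)$ block, which requires $\gamma_3(F)\cap F_0=\gamma_3(F_0)$. I would get this from the normal form and the retraction $F\to F_0$.
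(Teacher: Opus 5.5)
Your final computation is the paper's own proof. You lift $G_i=F_i/N_i$ by Proposition~\ref{proposition:lift}, identify $\gr(F)$ for $F=F_{\mathcal V_3}(\mathcal I_0\sqcup\mathcal I_1)$ with $\Lambda^{[1,3]}(V_0\oplus V_1)$, compute $\gr^F(\llangle N_0N_1\rrangle_F)$ by Lemma~\ref{lemma:gr of normal closure}, and take the quotient blockwise. The step you call the main obstacle is easy: if $\rho\colon F\to F_0$ is the retraction, then $a\in F_0\cap\gamma_k(F)$ gives $a=\rho(a)\in\gamma_k(F_0)$.

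\textbf{First gap: the dimension count.} Your closing step, ``the dimension count matches, and with surjectivity this gives injectivity,'' fails when the $\gr_n(G_i)$ are infinite-dimensional. A surjection between spaces of equal infinite dimension need not be injective. This is exactly the case the paper uses: $G_0=M$ is an e.c.\ model, or an extension of one, in Lemma~\ref{lemma:free-product-amalgam}, Theorem~\ref{thm:main} and Proposition~\ref{proposition:bdd LCS}. The repair is to check that your blockwise isomorphism $\gr_n(F)/\gr_n^F(\llangle N_0N_1\rrangle_F)\cong\bigoplus_{i+j=n}\mathcal B_{i,j}$ is inverse to $\eta_n$. Concretely, under $\gr(F)\cong\Lambda^{[1,3]}V$, $\eta_2(\bar u\otimes\bar v)=[\bar u,\bar v]$ is the class of $u\wedge v$, $\eta_3(\overline{u\wedge u'}\otimes\bar v)$ is the class of $u\wedge u'\wedge v$, and so on. Then $\eta_n$ is an isomorphism outright, and no separate surjectivity step is needed.

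\textbf{Second gap: the surjectivity argument.} That separate step is flawed anyway. In $[[a,b],a']=[[a,a'],b]+[a,[b,a']]$, the second summand lies again in $[\mathcal B_{1,0},\mathcal B_{1,1}]$. So the Jacobi identity only controls the part of $[[a,b],a']$ that is antisymmetric in $(a,a')$. Indeed, in a free Lie algebra $[[a,b],a']$ and $[[a,a'],b]$ are linearly independent. What you need is the exponent-$3$ identity $[x,y,y]=0$ in $\gr$, from the lemma following the definition of $\gr(G)$. Its linearization $[x,y,z]=-[x,z,y]$ gives $[[a,b],a']=-[[a,a'],b]\in\eta_3(\mathcal B_{2,1})$.

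The same identity is what makes part~(2) true. It also shows that the bracket rule in your model-group paragraph is wrong: the correct value of $[a\otimes b,a']$ is $-[a,a']\otimes b$, and your term $a'\otimes[a,b]$ does not lie in any block. Working directly in the Grassmann algebra, as the paper does, builds this alternation in automatically.
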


\begin{proof}
(2) is immediate from the definition of $\eta_i$.
We show (1).
Let $V_n=\gr_1(G_n)$ $(n<2)$ and fix a basis $\mathcal I_n$ of $V_n$ and a representative $a_e\in G_n$ of each vector $e\in \mathcal I_n$.
Let us consider a lift $F_{G_n}\coloneqq F_{\mathcal V_3}(\mathcal I_n)$ of $G_n$, and we will use the
chosen surjective homomorphism $\varphi_n:F_{G_n}\to G_n$ and its kernel
$K_n:=\ker(\varphi_n)\leq\gamma_2(F_{G_n})$ as defined in Proposition \ref{proposition:lift}.
Let $V=V_0\oplus V_1$, $F=F_{G_0}*_{\mathcal V_3} F_{G_1}=F_{\mathcal V_3}(\mathcal I_0\sqcup \mathcal I_1)$, $K=\langle K_0,K_1\rangle_F=K_0K_1$, and $L=\llangle K\rrangle_F$.
(Notice that $\gr_1(F)=V$.)
Then we can identify $G_0*_{\mathcal V_3}G_1$ with $F/L$.

By Lemma \ref{lemma:gr of quotient}, we have a canonical isomorphism $\gr_i(F/L)\cong \gr_i(F)/\gr_i^F(L)$.
Also by Lemma \ref{lemma:gr of normal closure}, 
\[
    \gr_1^F(L)=0, \quad \gr_2^F(L)=\gr_2^F(K), \quad \gr_3^F(L)=\gr_3^F(K)+[\gr_2^F(K),\gr_1(F)].
\]
Now recall that we can identify $\gr_i(F)$ with $\Lambda^i V$ and recall the standard direct sum decomposition of Grassmann algebra:
\begin{eqnarray*}
    \Lambda^1(V_0\oplus V_1)&=&\Lambda^1V_0 \oplus \Lambda^1 V_1 = V_0\oplus V_1,\\ 
    \Lambda^2(V_0\oplus V_1)&=&\Lambda^2V_0 \oplus (V_0\otimes V_1)\oplus\Lambda^2 V_1,\\ 
    \Lambda^3(V_0\oplus V_1)&=&\Lambda^3V_0 \oplus (\Lambda^2V_0\otimes V_1) \oplus (V_0\otimes \Lambda^2V_1)\oplus\Lambda^3 V_1.    
\end{eqnarray*}
Hence,
\begin{eqnarray*}
    \gr_1(G_0*_{\mathcal V_3}G_1) 
    &\cong& \gr_1(F)/\gr_1^F(L)\\ 
    &=& V_{0}\oplus V_{1}.
\end{eqnarray*}
Since $K=K_0K_1$ is abelian, by letting $\gr_2^F(K_n)=R_n\subset \Lambda^2 V_n$,
\begin{eqnarray*}
    \gr_2^F(K)=R_0\oplus R_1 &\subset& \Lambda^2 V_0\oplus \Lambda^2 V_1\subset \Lambda^2 V.
\end{eqnarray*}
Hence $\gr_2^F(K)$ is block-homogeneous with respect to the decomposition.
After we identify $\gr_2(F_{G_n})$ with $\Lambda^2 V_n$, we can consider $\gr_2^{F_{G_n}}(K_n)=\gr_2^F(K_n)=R_n$.
Hence, because $\Lambda^2V_n/R_n \cong \gr_2(F_{G_n}/K_n)=\gr_2(G_n)$,
\begin{eqnarray*}
    \gr_2(G_0*_{\mathcal V_3}G_1) 
    &\cong& \gr_2(F)/\gr_2^F(L)\\ 
    &=& \Lambda^2(V_0\oplus V_1)/\gr_2^F(K)\\ 
    &\cong& \frac{\Lambda^2V_0\oplus (V_0\otimes V_1)\oplus \Lambda^2V_1}{R_0\oplus R_1}\\ 
    &\cong& \gr_2(G_0) \oplus (\gr_1(G_0)\otimes \gr_1(G_1))\oplus \gr_2(G_1).
\end{eqnarray*}

Let $S=\gr_3^F(L)\subset \Lambda^3V$, 
$S_n=\gr_3^{F_{G_n}}(K_n)\subset \Lambda^3V_n$.
Then $\gr_3^F(K)=S_0\oplus S_1$.
Arguing as above,
\[S=S_0+S_1+[R_0+R_1,V]=S_0+S_1+(R_0+R_1)\wedge(V_0+V_1).\]
Because $R_n\wedge V_n\subset S_n$, we have
\[
    S=S_0+R_0\wedge V_1+V_0\wedge R_1+S_1, \quad R_n\wedge V_{1-n}\cong R_n\otimes V_{1-n}.
\]
Hence,
\begin{eqnarray*}
    \gr_3(G_0*_{\mathcal V_3} G_1) 
    &\cong&
    \gr_3(F)/\gr_3^F(L)\\ 
    &\cong&
    \frac{\Lambda^3(V_0)\oplus(\Lambda^2V_0\otimes V_1) \oplus (V_0\otimes \Lambda^2V_1)\oplus\Lambda^3 V_1}{S_0+R_0\otimes V_1+V_0\otimes R_1 + S_1}\\ 
    &\cong&
    \gr_3(G_0)\oplus (\gr_2(G_0)\otimes \gr_1(G_1))\oplus (\gr_1(G_0)\otimes \gr_2(G_1))\oplus \gr_3(G_1),
\end{eqnarray*}
because $S=S_0+R_0\otimes V_1+V_0\otimes R_1+S_1$ is block-homogeneous.
\end{proof}

In the remainder of this section, we will often consider $H=G*_{\mathcal V_3} F_n$ where $F_n=F_{\mathcal V_3}(x_1,\cdots, x_n)$.
By Proposition \ref{proposition:gr of free product}, we use the following identifications:
    \begin{eqnarray*}
        \gr_1(H)&=& \gr_1(G)\oplus \gr_1(F_n)\\ 
        \gr_2(H) &=& \gr_2(G)\oplus (\gr_1(G)\otimes \gr_1(F_{n})) \oplus \gr_2(F_{n})\\
        \gr_3(H) &=& \gr_3(G)\oplus (\gr_2(G)\otimes \gr_1(F_{n})) \oplus (\gr_1(G)\otimes \gr_2(F_{n})) \oplus \gr_3(F_{n})
    \end{eqnarray*}
    and, by Proposition \ref{proposition:gr(F) is Grassmann algebra},
    \[
        \gr_i F_{n} = \Lambda^i V_n
        \]
    where $V_n$ is a vector space over $\mathbb F_3$ with basis $\{x_j\mid 1\leq j\leq n\}$.

\begin{lemma}\label{lemma:free-product-amalgam}
    If $D\leq_{\LCS}G\in\mathcal V_3$, then for every $B\in\mathcal V_3$,
    a natural homomorphism
    $D*_{\mathcal V_3}B\to G*_{\mathcal V_3}B$ induced by the inclusion
    $D\leq G$ is injective.
\end{lemma}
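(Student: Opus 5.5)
The plan is to reduce injectivity of the group map to injectivity on associated graded Lie algebras, using Proposition \ref{proposition:gr(f) and LCS}(1), and then compute both sides with Proposition \ref{proposition:gr of free product}. Let $\iota:D*_{\mathcal V_3}B\to G*_{\mathcal V_3}B$ be the natural homomorphism induced by $D\leq G$ and $\mathrm{id}_B$. By Proposition \ref{proposition:gr(f) and LCS}(1), it suffices to show that $\gr(\iota)$ is injective in each degree $1,2,3$.

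Since $D\leq_{\LCS}G$ and the inclusion is injective, Proposition \ref{proposition:gr(f) and LCS}(2) shows that $\gr_i(D)\to\gr_i(G)$ is injective for $i=1,2,3$. Write $\alpha_i$ for these injections. By Proposition \ref{proposition:gr of free product}(1), we have the block decompositions
\[
\gr_2(D*_{\mathcal V_3}B)\cong\gr_2(D)\oplus(\gr_1(D)\otimes\gr_1(B))\oplus\gr_2(B),
\]
and similarly in degrees $1$ and $3$; the same holds with $G$ in place of $D$. The key step is to check that $\gr(\iota)$ respects these decompositions and acts blockwise. On the blocks $\gr_i(D)$ it is $\alpha_i$, on $\gr_i(B)$ it is the identity, and on a mixed block $\gr_i(D)\otimes\gr_j(B)$ it is $\alpha_i\otimes\mathrm{id}$. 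This compatibility comes from naturality. The isomorphisms $\eta_k$ are defined from the canonical embeddings and the Lie bracket, specifically $a\otimes b\mapsto[\eta_i(a),\eta_j(b)]$. The map $\gr(\iota)$ is a Lie algebra homomorphism that commutes with the canonical embeddings: $\iota\circ j_D=j_G|_D$ and $\iota\circ j_B=j_B$. Therefore $\gr(\iota)(\eta_{i+j}(a\otimes b))=[\eta_i(\alpha_i a),\eta_j(b)]=\eta_{i+j}(\alpha_i a\otimes b)$.

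Injectivity on each block then follows. The maps $\alpha_i$ and $\mathrm{id}$ are injective, and $\alpha_i\otimes\mathrm{id}$ is injective because tensor products of $\mathbb F_3$-vector spaces preserve injective linear maps, since vector spaces are flat. A direct sum of injective maps between corresponding summands is injective, so $\gr_k(\iota)$ is injective for $k=1,2,3$. Proposition \ref{proposition:gr(f) and LCS}(1) then gives injectivity of $\iota$.

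The main obstacle is the naturality check in the second paragraph. One must make sure the identifications in Proposition \ref{proposition:gr of free product} are canonical, and not dependent on the chosen lifts $F_{G_n}$, so that the $D$-side and $G$-side decompositions are compatible under $\gr(\iota)$. I would handle this directly from the description of $\eta_k$ as maps into $\gr_k$ of the coproduct, built only from the embeddings and the bracket. With that description, the commutation $\gr(\iota)\circ\eta_k^{D}=\eta_k^{G}\circ(\alpha\text{-blockwise map})$ holds on generators $a\otimes b$ and on the pure blocks, and both sides are linear. Because $\eta_k^D$ is an isomorphism, injectivity of $\gr_k(\iota)$ reduces to injectivity of the blockwise map.
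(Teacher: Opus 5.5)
Your proposal is correct and is essentially the paper's proof. Strictness gives injectivity of $\gr(D)\to\gr(G)$ by Proposition~\ref{proposition:gr(f) and LCS}; the block decomposition of Proposition~\ref{proposition:gr of free product} then gives injectivity of $\gr(D*_{\mathcal V_3}B)\to\gr(G*_{\mathcal V_3}B)$; and Proposition~\ref{proposition:gr(f) and LCS}(1) returns injectivity at the group level. Your explicit naturality check (that $\eta_k$ is built only from the canonical embeddings and the bracket, so $\gr(\iota)$ acts blockwise as $\alpha_i$, $\mathrm{id}$ and $\alpha_i\otimes\mathrm{id}$) correctly fills in the step the paper leaves implicit.
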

\begin{proof}
    Since $D\leq_\LCS G$, a natural homomorphism $\gr(D) \to \gr(G)$ is injective by Proposition \ref{proposition:gr(f) and LCS}.
    Hence, by Proposition \ref{proposition:gr of free product}, $\gr(D*_{\mathcal V_3}B) \to \gr(G*_{\mathcal V_3}B)$ is injective.
    Again, by Proposition \ref{proposition:gr(f) and LCS},
    a natural homomorphism
    $D*_{\mathcal V_3}B\to G*_{\mathcal V_3}B$ is injective. 
\end{proof}

\begin{lemma}\label{lemma:coincidence of central series}
    Let $1\neq G\in \mathcal V_3$ and let $F_2=F_{\mathcal V_3}(x,y)$ be the relatively free group of rank $2$ in $\mathcal V_3$.
    Then $G\leq_{\LCS}G*_{\mathcal V_3}F_2$, and the lower central series of
    $G*_{\mathcal V_3}F_2$ coincides with its upper central series in reverse
    order.
\end{lemma}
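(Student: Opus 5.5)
First I establish $G\leq_{\LCS}H$ with $H=G*_{\mathcal V_3}F_2$. Proposition~\ref{proposition:gr of free product}(1) shows that $\gr_i(G)$ is a direct summand of $\gr_i(H)$ via the map induced by the inclusion, so $\gr(G)\to\gr(H)$ is injective. Since $G\to H$ is injective, Proposition~\ref{proposition:gr(f) and LCS}(2) then gives $G\leq_{\LCS}H$.

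For the second assertion it suffices to prove $Z_{4-i}(H)\leq\gamma_i(H)$ for $i=1,2,3$, because the reverse inclusions are the general fact $\gamma_i\leq Z_{4-i}$ in class $\le 3$. The case $i=1$ is trivial. I will work in $\gr(H)$ with the block decomposition of Proposition~\ref{proposition:gr of free product}(2), writing $U=\gr_1(G)\neq 0$ (nonzero since $G\neq1$, by Remark~\ref{remark:G=H}) and $W=\gr_1(F_2)=\mathbb F_3x\oplus\mathbb F_3y$, with $\gr_2(F_2)=\mathbb F_3(x\wedge y)$ and $\gr_3(F_2)=0$.

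For $i=3$, take $a\in Z(H)$ and suppose $a\notin\gamma_3(H)$. Let $k\in\{1,2\}$ be maximal with $a\in\gamma_k(H)$, so $\bar a=\gr_k(a)\neq0$ and $[\bar a,\gr_1(H)]=0$ in $\gr_{k+1}(H)$. I will split $\bar a$ into block components and bracket with $x$ or $y$. If $k=1$ and $\bar a=u+w$ with $u\in U$, $w\in W$, then $[\bar a,x]=u\otimes x+[w,x]$, and these lie in different blocks. So $u=0$, and then $w\wedge x=w\wedge y=0$ forces $w=0$. If $k=2$ and $\bar a=g+\sum u_j\otimes w_j+c\,(x\wedge y)$, then bracketing with $x$ lands in $\gr_2(G)\otimes W\oplus U\otimes\Lambda^2W\oplus 0$. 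The component $g\otimes x$ forces $g=0$, and the mixed term $t\in U\otimes W$ maps to $\pm$ its contraction-wedge with $x$ in $U\otimes\Lambda^2W$. Bracketing with both $x$ and $y$ and using $\dim W=2$ kills $t$. The last term $c\,(x\wedge y)$ is handled by bracketing with an element $u\in U\setminus\{0\}$, which gives $u\otimes(x\wedge y)\neq0$; this is where $G\neq1$ is needed. So $\bar a=0$, a contradiction.

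For $i=2$, take $a\in Z_2(H)$, so $[a,h]\in Z(H)=\gamma_3(H)$ for all $h$. If $a\notin\gamma_2(H)$, then $\bar a=\gr_1(a)=u+w\neq0$ and $[\bar a,\gr_1(H)]=0$ in $\gr_2(H)$, which the $k=1$ computation above rules out. I expect the main obstacle to be the $i=3$, $k=2$ bookkeeping, in particular verifying that the map $U\otimes W\to (U\otimes\Lambda^2W)^2$, $t\mapsto([t,x],[t,y])$, is injective. For this I would write $t=u_1\otimes x+u_2\otimes y$, so that $[t,x]=\pm u_2\otimes(x\wedge y)$ and $[t,y]=\pm u_1\otimes(x\wedge y)$, and the injectivity follows.
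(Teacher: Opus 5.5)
Your proof is correct and follows the paper's route: it uses the block decomposition of $\gr(H)$ from Proposition~\ref{proposition:gr of free product}, brackets with $x$, $y$, or a nonzero element of $\gr_1(G)$, and makes essentially the same case split on the components of $\gr_k(a)$. The only difference is the order. The paper first proves $\gamma_2(H)=Z_2(H)$ by exhibiting nonvanishing triple commutators $[a,x,y]$ or $[a,g,w]$, and then uses this to reduce $Z(H)=\gamma_3(H)$ to elements of $\gamma_2(H)$. You instead prove $Z(H)=\gamma_3(H)$ first in both degrees and deduce $Z_2(H)=\gamma_2(H)$ from it, so you only ever need single brackets $\gr_k(H)\times\gr_1(H)\to\gr_{k+1}(H)$.
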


\begin{proof}
    Let $H=G*_{\mathcal V_3}F_2$.
    Let $f:G\to H$ be a natural inclusion map. By Proposition \ref{proposition:gr of free product}, $\gr(f):\gr G\to \gr H$ is injective. Hence $G\leq_\LCS H$ by Proposition \ref{proposition:gr(f) and LCS}.

    \begin{claim}
        $\gamma_2(H)=Z_2(H)$.
    \end{claim}
    Let $a\in H\setminus\gamma_2(H)$. We show that there is $b,c\in H$ such that $[a,b,c]\neq 1$, which implies $a\not\in Z_2(H)$.
    Because $\gr_1(H)=\gr_1(G)\oplus \gr_1(F_2)$, $\gr_1(a)=\alpha+\beta$ for some $\alpha\in \gr_1(G)$ and $\beta\in \gr_1(F_2)=\Lambda^1V_2$ where $V_2$ is a $\mathbb F_3$-vector space with a basis $\{x,y\}$.
    
    Case 1: Suppose $\alpha\neq 0$. Then $\gr_3([a,x,y])=-\alpha\otimes (x\wedge y)+\beta\wedge x\wedge y\neq 0$, hence $[a,x,y]\neq 1$.

    Case 2: Suppose $\alpha=0$ and $\beta \neq 0$. 
    Take $g \in G\setminus \gamma_2(G)$. 
    Since $\dim \gr_1(F_2)=2$, we can find  $w\in F_2$ such that $\{\beta, \gr_1(w)\}$ is linearly independent in $\gr_1(F_2)$.
    Then $[a,g,w]\neq 1$ because $\gr_3([a,g,w])=\gr_1(g) \otimes (\beta\wedge \gr_1(w)) \neq 0$.

    \begin{claim}
        $\gamma_3(H)=Z(H)$
    \end{claim}
    Let $a\not\in\gamma_3(H)$. We show that there is $b\in H$ such that $[a,b]\neq 1$, which implies $a\not\in Z(H)$.
    Because of Claim A, we can assume $a\in \gamma_2(H)$.
    Since $\gr_2(H)=\gr_2(G)\oplus (\gr_1(G)\otimes \gr_1(F_2))\oplus \gr_2(F_2)$,
    we have $\gr_2(a)=\alpha_2+ \alpha_1\otimes x+\alpha_1'\otimes y + \beta_2$ for some $\alpha_2\in\gr_2(G)$, $\alpha_1,\alpha_1'\in\gr_1(G)$, and $\beta_2\in\gr_2(F_2)$.
    (Here, we used that $\gr_1(F_2)=\mathbb F_3x\oplus \mathbb F_3 y$.)
    
    Case 1: If $\alpha_2\neq 0$, then $[a,x]\neq 1$.
    
    Case 2: If $\alpha_2=0$ and $\alpha_1\neq 0$, then $\gr_3([a,y])=-\alpha_1\otimes(x\wedge y)+\beta_2\wedge y=-\alpha_1\otimes(x\wedge y)\neq 0$. In the case of $\alpha_1'\neq 0$, use $x$ instead of $y$.

    Case 3: If $\alpha_2=0$, $\alpha_1=\alpha_1'=0$ and $\beta_2\neq 0$, then choose $g\in G\setminus \gamma_2(G)$. We have $[a,g]\neq 1$ because $\gr_3([a,g])=-\gr_1(g)\otimes \beta_2\neq 0$.
\end{proof}

\begin{lemma}\label{lemma:basic commutator root}
    Let $G\in \mathcal V_3$, $g_1,\cdots, g_n \in \gamma_2(G)$ and $F_{2n}=F_{\mathcal V_3}(x_1,y_1,\cdots, x_n,y_n)$.
    Then a natural homomorphism $G\to H$ is injective where 
    $$
        H=(G*_{\mathcal V_3}F_{2n})/
        \llangle g_i^{-1}[x_i,y_i]\mid 1\leq i\leq n\rrangle.
    $$
\end{lemma}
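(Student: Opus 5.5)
The plan is to argue entirely through associated graded Lie algebras, in the same way as in the proof of Proposition~\ref{proposition:gr of free product}. Put $P=G*_{\mathcal V_3}F_{2n}$ and $k_i=g_i^{-1}[x_i,y_i]$. Let $K=\langle k_1,\dots,k_n\rangle\leq P$ and $N=\llangle K\rrangle_P$, so that $H=P/N$. Since $g_i\in\gamma_2(G)\leq\gamma_2(P)$ and $[x_i,y_i]\in\gamma_2(P)$, we have $K\leq\gamma_2(P)$. Moreover $K$ is abelian, because $\gamma_2(P)$ is abelian.

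By Proposition~\ref{proposition:gr(f) and LCS}(1), it suffices to show that $\gr(G)\to\gr(H)$ is injective. By Lemma~\ref{lemma:gr of quotient}, $\gr_i(H)\cong\gr_i(P)/\gr_i^P(N)$. So the goal is
\[
\gr_i(G)\cap\gr_i^P(N)=0\qquad(i=1,2,3),
\]
where $\gr_i(G)$ is viewed as a block of $\gr_i(P)$ via Proposition~\ref{proposition:gr of free product}. By Lemma~\ref{lemma:gr of normal closure},
\[
\gr_1^P(N)=0,\qquad \gr_2^P(N)=\gr_2^P(K),\qquad \gr_3^P(N)=\gr_3^P(K)+[\gr_2^P(K),\gr_1(P)].
\]
The case $i=1$ is therefore immediate.

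The first step is to compute $\gr^P(K)$. Since $K$ is abelian, every element of $K$ has the form
\[
\prod_i k_i^{c_i}=\Bigl(\prod_i g_i^{-c_i}\Bigr)\Bigl(\prod_i[x_i,y_i]^{c_i}\Bigr).
\]
Its image in $\gr_2(P)$ is $\sum_i c_i(-\bar g_i+x_i\wedge y_i)$, where $\bar g_i=\gr_2(g_i)\in\gr_2(G)$ and $x_i\wedge y_i\in\Lambda^2V_{2n}=\gr_2(F_{2n})$. The elements $x_i\wedge y_i$ are linearly independent in $\Lambda^2V_{2n}$. Hence this image vanishes only when all $c_i=0$, and then the element itself is trivial. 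This gives $K\cap\gamma_3(P)=1$, so $\gr_3^P(K)=0$. It also gives $\gr_2^P(K)=\operatorname{span}\{-\bar g_i+x_i\wedge y_i\}$. The projection of this space onto the block $\gr_2(F_{2n})$ is injective, so it meets the block $\gr_2(G)$ trivially. This settles $i=2$.

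The main step is degree $3$, where $\gr_3^P(N)=[\gr_2^P(K),\gr_1(P)]$. A general element of this space is
\[
\sum_i\bigl[-\bar g_i+x_i\wedge y_i,\;u_i+w_i\bigr],\qquad u_i\in\gr_1(G),\ w_i\in V_{2n}.
\]
Using the block rule (2) of Proposition~\ref{proposition:gr of free product}, I would split it into its components:
\begin{itemize}[label={}]
\item $\sum_i[-\bar g_i,u_i]$ in $\gr_3(G)$;
\item $-\sum_i\bar g_i\otimes w_i$ in $\gr_2(G)\otimes\gr_1(F_{2n})$;
\item $\pm\sum_i u_i\otimes(x_i\wedge y_i)$ in $\gr_1(G)\otimes\gr_2(F_{2n})$;
\item $\sum_i x_i\wedge y_i\wedge w_i$ in $\Lambda^3V_{2n}$.
\end{itemize}
Suppose the element lies in $\gr_3(G)$. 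Then its $\gr_1(G)\otimes\Lambda^2V_{2n}$-component vanishes. Because the $x_i\wedge y_i$ are independent, this forces every $u_i=0$. But then the $\gr_3(G)$-component $\sum_i[-\bar g_i,u_i]$ is also $0$, so the element itself is $0$. Hence $\gr_3(G)\cap\gr_3^P(N)=0$.

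I expect this degree-$3$ bookkeeping to be the only delicate point. In particular one must check carefully, via the explicit tensor identifications of Proposition~\ref{proposition:gr of free product}, that $[x_i\wedge y_i,u]$ really is $\pm u\otimes(x_i\wedge y_i)$. Once that holds, $\gr(G)\to\gr(H)$ is injective, and Proposition~\ref{proposition:gr(f) and LCS}(1) gives that $G\to H$ is injective.
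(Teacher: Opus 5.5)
Your argument is correct and is essentially the paper's proof. The paper takes $a\in L\cap G$, writes it as $\prod_i r_i^{m_i}[r_i,h_i]$, forces $m_i=0$ from the independent $x_i\wedge y_i$-components in $\gr_2$, and then forces $s_i=0$ from the $\gr_1(G)\otimes\gr_2(F_{2n})$-component in $\gr_3$ --- exactly your degree-$2$ and degree-$3$ bookkeeping, done elementwise rather than for the whole subspaces $\gr_i^P(N)$. Your subspace-level packaging via Lemma~\ref{lemma:gr of normal closure} and Proposition~\ref{proposition:gr(f) and LCS} costs nothing extra and gives slightly more: $\gr(G)\to\gr(H)$ is injective, i.e.\ $G\leq_{\LCS}H$.
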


\begin{proof}
    Let 
    \[ 
        H_0=G*_{\mathcal V_3}F_{2n},\quad 
        r_i = g_i^{-1}[x_i,y_i] \in H_0
         \] 
    \[ 
        K=\langle r_i\mid i\leq n\rangle_{H_0}\leq \gamma_2(H_0), \quad
        L=\llangle K\rrangle_{H_0}=\llangle r_1\rrangle_{H_0}\cdots\llangle r_n\rrangle_{H_0}, \quad H= H_0/L.
    \]  

    \begin{claim}
        $L\leq \gamma_2(H_0)$ and $\llangle r_i\rrangle = \{r_i^m[r_i,h]\mid m\in \mathbb F_3, h\in H_0\}$.
    \end{claim}
    Since $L=K[K,H_0]$ and $K\leq \gamma_2(H_0)$, we have $L\leq \gamma_2(H_0)$.
    Similarly,
    \[
        \llangle r_i\rrangle
        =\langle r_i\rangle [\langle r_i\rangle,H_0]
        =\langle r_i\rangle [r_i,H_0].
    \]
    Here, we used that $[r^{-1},h]=[r,h]^{-1}=[r,h^{-1}]$.
    Since $r_i\in\gamma_2(H_0)$, $[r_i,h][r_i,h']=[r_i,hh']$.
    Hence $[r_i,H_0]=\{[r_i,h]\mid h\in H_0\}$.

    It suffices to show that $L\cap G=1$, so let $a\in L\cap G$.
    By Claim A, $a=\prod_i r_i^{m_i}[r_i,h_i]\in\gamma_2(H_0)$ for some $h_i\in H_0$.

    \begin{claim}
    $m_i=0$ $(i\leq n)$. Hence $a=\prod_i[r_i,h_i] \in\gamma_3(H_0)$.
    \end{claim}
    Since $g_i,r_i,a\in \gamma_2(H_0)$, we can calculate that
    \begin{eqnarray*}
    \gr_2(a)&=&\sum_i m_i\gr_2(r_i)\\ 
    &=&\sum_i m_i(\gr_2([x_i,y_i])-\gr_2(g_i))\\ 
    &=& \sum_i m_i(x_i\wedge y_i) - \sum_i m_i\gr_2(g_i).
    \end{eqnarray*}
    Because $a\in G$, $\gr_2(a)$ must be in $\gr_2(G)$ whereas $0\neq x_i\wedge y_i\in \gr_2(F_{2n})$ and $\{x_i\wedge y_i\mid i\leq n\}$ is linearly independent.
    Hence we have $m_i=0$ for all $i$ and it follows that $a\in\gamma_3(H_0)$.

    \begin{claim}
        $a=1$.
    \end{claim}
    Recall that $a=\prod_i [r_i,h_i]\in \gamma_3(H_0)$.
    Suppose $\gr_1(h_i)=s_i+ t_i$ where $s_i\in \gr_1(G)$ and $t_i\in \gr_1(F_{2n})$.
    Then, 
    \begin{eqnarray*}
        \gr_3(a) 
        &=& \sum_i [x_i\wedge y_i-\gr_2(g_i),s_i+t_i]\\ 
        &=& \sum_i -s_i\otimes(x_i\wedge y_i) + x_i\wedge y_i\wedge t_i - [\gr_2(g_i),s_i] - \gr_2(g_i)\otimes t_i.
    \end{eqnarray*}
    Because $a\in G$ and $\gr_3(a)\in\gr_3(G)$,
    $\gr_3(a)$ must be $-\sum_i[\gr_2(g_i),s_i]$. Hence $\sum_i s_i\otimes (x_i\wedge y_i)=0$, which implies $s_i=0$.
    Therefore we have $\gr_3(a)=0$ and $a=1$.
\end{proof}

\begin{lemma}\label{lemma:commutator root}
Let $C\leq G\in \mathcal V_3$ such that $C$ is generated by $m$ elements. Then there is an extension $G\leq H\in \mathcal V_3$ and $C\leq D \leq H$ such that
\begin{enumerate}
    \item $D$ is generated by $C$ and at most $2m$ elements in $H$, and
    \item $D\cap \gamma_2(H)= \gamma_2(D)$.
\end{enumerate}    
\end{lemma}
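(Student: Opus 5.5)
The plan is to adjoin, for each element of $C$ that lies in $\gamma_2(G)$ but not in $\gamma_2(C)$ (up to a basis), a pair of new generators whose commutator equals that element. Lemma~\ref{lemma:basic commutator root} does this without collapsing $G$. We then check strictness at the first level by computing $\gr_1$.

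\emph{Step 1: choice of generators.} The space $W\coloneqq C/\gamma_2(C)$ is an $\mathbb F_3$-vector space of dimension at most $m$. Let $U\leq W$ be the image of $C\cap\gamma_2(G)$. Choose $c_1,\dots,c_r\in C\cap\gamma_2(G)$ whose images form a basis of $U$, and extend by $c_{r+1},\dots,c_k\in C$ to lifts of a basis of $W$; then $k\leq m$. By Remark~\ref{remark:G=H}, $C=\langle c_1,\dots,c_k\rangle$. The key property is the following: if $\prod_{j>r}c_j^{l_j}\in\gamma_2(G)$, then this element lies in $C\cap\gamma_2(G)$, so its image lies in $U$. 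Since the $c_j$ with $j>r$ are independent modulo $U$, all $l_j=0$. Hence the classes $\gr_1(c_j)$ for $j>r$ are linearly independent in $\gr_1(G)$.

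\emph{Step 2: the extension.} Apply Lemma~\ref{lemma:basic commutator root} with $n=r$ and $g_i=c_i$ to get
\[
H=(G*_{\mathcal V_3}F_{2r})/L,\qquad L=\llangle c_i^{-1}[x_i,y_i]\mid i\leq r\rrangle .
\]
By that lemma $G$ embeds in $H$, and in $H$ we have $[x_i,y_i]=c_i$. Put $D=\langle C,x_1,y_1,\dots,x_r,y_r\rangle$, so that $D$ is generated by $C$ and $2r\leq 2m$ further elements. Moreover $c_i\in\gamma_2(D)$ for $i\leq r$, hence $D=\langle x_i,y_i\ (i\leq r),\ c_j\ (j>r)\rangle$ modulo $\gamma_2(D)$.

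\emph{Step 3: strictness.} Since $L\leq\gamma_2(G*_{\mathcal V_3}F_{2r})$ (Lemma~\ref{lemma:gr of normal closure}), we have $\gr_1^{G*F_{2r}}(L)=0$. Lemma~\ref{lemma:gr of quotient} together with Proposition~\ref{proposition:gr of free product} then give
\[
\gr_1(H)=\gr_1(G)\oplus\Lambda^1V_{2r}.
\]
Now let $a\in D\cap\gamma_2(H)$. Modulo $\gamma_2(D)$ we can write
\[
a\equiv\prod_i x_i^{p_i}y_i^{q_i}\prod_{j>r}c_j^{l_j}.
\]
Applying $\gr_1$ in $H$ yields
\[
0=\sum_i(p_ix_i+q_iy_i)+\sum_{j>r}l_j\gr_1(c_j).
\]
The $x_i,y_i$ are independent in $\Lambda^1V_{2r}$, and by Step~1 the $\gr_1(c_j)$ are independent in $\gr_1(G)$. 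Hence all exponents vanish and $a\in\gamma_2(D)$. The reverse inclusion $\gamma_2(D)\subseteq D\cap\gamma_2(H)$ is trivial.

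The main obstacle is Step~1: one must choose generators so that the "bad" part $C\cap\gamma_2(G)$ is captured by at most $m$ elements, and so that the remaining generators stay independent in $\gr_1(G)$. After the extension, it is this independence that prevents new elements of $D$ from falling into $\gamma_2(H)$. The embedding $G\leq H$ itself is supplied by Lemma~\ref{lemma:basic commutator root}.
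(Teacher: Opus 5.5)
Your proof is correct and is essentially the paper's argument: you use the same choice of $c_1,\dots,c_r$ lifting a basis of $(C\cap\gamma_2(G))/\gamma_2(C)$, the same extension $H$ from Lemma~\ref{lemma:basic commutator root}, the same $D$, and the same decomposition $\gr_1(H)=\gr_1(G)\oplus\Lambda^1V_{2r}$. The only cosmetic difference is at the end: you extend to a basis of $C/\gamma_2(C)$ and use independence of the complementary classes $\gr_1(c_j)$, whereas the paper writes $a\in c\prod_i x_i^{m_i}y_i^{n_i}\gamma_2(D)$ with $c\in C$ and concludes directly that $c\in C\cap\gamma_2(G)\subseteq\langle c_i\rangle\gamma_2(C)\subseteq\gamma_2(D)$.
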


\begin{proof}
    Let $g_1,\cdots, g_n\in C\cap \gamma_2(G)$ be such that $\{g_i\gamma_2(C)\mid i\leq n\}$ is a basis of $(C\cap \gamma_2(G))/\gamma_2(C)$.
    Then $C\cap \gamma_2(G)\subset \langle g_i\mid i\leq n\rangle \gamma_2(C)$ and $n\leq \dim\gr_1(C) \leq m$.
    Let $F_{2n}=F_{\mathcal V_3}(x_1,y_1,\cdots, x_n,y_n)$, $H_0=G*_{\mathcal V_3} F_{2n}$ and $H=H_0/L$ where $L=\llangle g_i^{-1}[x_i,y_i]\mid i\leq n\rrangle$.
    Then, by Lemma \ref{lemma:basic commutator root}, $G\leq H$.

    \begin{claim}
        $D=\langle C, x_i,y_i\mid i\leq n\rangle_H \leq H$ satisfies $D\cap \gamma_2(H)=\gamma_2(D)$.
    \end{claim}
    First notice that, since $L\leq \gamma_2(H_0)$, $\gr_1(H)=\gr_1(H_0/L)\cong \gr_1(H_0)=\gr_1(G)\oplus \gr_1(F_{2n})$.

    Let $a\in D\cap \gamma_2(H)$.
    Then $a\in c\prod x_i^{m_i}y_i^{n_i}\gamma_2(D)$ for some $c\in C$.
    Hence, $0=\gr_1(a)=\gr_1(c)+\sum_i m_ix_i+\sum_i n_iy_i$.
    Because $\{x_i,y_i\mid i\leq n\}$ is a basis of $\gr_1(F_{2n})$, $m_i=n_i=0$ for all $i\leq n$ and $\gr_1(c)=0$. 
    Therefore $c\in \gamma_2(G)$ follows.
    This argument shows that $D\cap \gamma_2(H)=(C\cap \gamma_2(G))\gamma_2(D)$.
    By the choice of $g_i$ $(i\leq n)$, 
    \begin{eqnarray*}
        D\cap \gamma_2(H)&=& (C\cap \gamma_2(G))\gamma_2(D)\\ 
        &\subset& \left(\langle g_i\mid i\leq n\rangle\gamma_2(C)\right)\gamma_2(D)\\ 
        &\subset& \gamma_2(D). 
    \end{eqnarray*}
\end{proof}

\begin{lemma}\label{lemma:triple commutator root}
    Let $G\in\mathcal V_3$, $g_1,\cdots, g_n \in Z(G)$ and let $F_{3n}=F_{\mathcal V_3}(x_1,y_1,z_1,\cdots, x_n,y_n,z_n)$.
    Then
    the natural map $G\to H$ is injective where
    \[
        H=(G\times F_{3n})/\llangle g_i^{-1}[x_i,y_i,z_i]\mid i\leq n\rrangle
    \]
\end{lemma}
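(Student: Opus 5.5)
Since $g_i\in Z(G)$ and $[x_i,y_i,z_i]\in\gamma_3(F_{3n})\le Z(F_{3n})$, every relator $r_i=g_i^{-1}[x_i,y_i,z_i]$ is central in the direct product $P=G\times F_{3n}$. Hence $L=\llangle r_i\mid i\le n\rrangle_P=\langle r_1,\dots,r_n\rangle$. This is an elementary abelian $3$-group, and each of its elements has the form $\prod_i r_i^{m_i}$ with $m_i\in\mathbb F_3$. No conjugates need to be tracked, so the situation is simpler than in Lemma~\ref{lemma:basic commutator root}, where the free product forced us to control $[r_i,h]$ terms. The natural map $G\to H=P/L$ is injective exactly when $L\cap G=1$, where $G$ is identified with $G\times 1\le P$. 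That is what I will prove.

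Take $a=\prod_i r_i^{m_i}\in L\cap G$. In the direct product, using centrality and commutativity,
\[
a=\Bigl(\prod_i g_i^{-m_i},\ \prod_i [x_i,y_i,z_i]^{m_i}\Bigr)\in G\times F_{3n}.
\]
Since $a\in G\times 1$, the second coordinate must be trivial: $\prod_i[x_i,y_i,z_i]^{m_i}=1$ in $F_{3n}$. Fact~\ref{fact:Levi and van der Waerden} applies after ordering the generators as $x_1<y_1<z_1<x_2<\cdots$. The elements $[x_i,y_i,z_i]$ are distinct basic triple commutators $[x_j,x_k,x_l]$ with $j<k<l$, so the normal form is unique. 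Equivalently, by Proposition~\ref{proposition:gr(F) is Grassmann algebra}, their images $x_i\wedge y_i\wedge z_i$ are linearly independent in $\Lambda^3V$, and $\gamma_3(F_{3n})=\gr_3(F_{3n})$ because $\gamma_4=1$. Either way, all $m_i=0$, so $a=1$.

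There is no serious obstacle. The only point needing care is to use the direct product's coordinatewise structure rather than graded Lie computations. I must also check that $L$ really is the subgroup generated by the $r_i$, which holds because each $r_i$ is central in $P$: its first coordinate is central in $G$ and its second lies in $\gamma_3(F_{3n})\le Z(F_{3n})$ by Fact~\ref{fact:elementary equations}.
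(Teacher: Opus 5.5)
Your proposal is correct and is essentially the paper's proof: the relators are central in $G\times F_{3n}$, so $L$ is just the subgroup they generate, and comparing the $F_{3n}$-coordinate of an element $\prod_i r_i^{m_i}\in G$ forces every $m_i=0$. You also make explicit two points the paper leaves implicit: that $[x_i,y_i,z_i]\in\gamma_3(F_{3n})\le Z(F_{3n})$, and that these triple commutators are independent by the Levi--van der Waerden normal form.
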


\begin{proof}
    Let $L=\llangle g_i^{-1}[x_i,y_i,z_i]\mid i\leq n\rrangle\leq G\times F_{3n}$.
    Since $g_i\in Z(G)$, $g_i\in Z(G\times F_{3n})$.
    Hence $L=\langle g_i^{-1}[x_i,y_i,z_i]\mid i\leq n\rangle$.

    We prove that $G\cap L=1$ in $G\times F_{3n}$.
    Let $a\in G\cap L$.
    Then $a=\prod_i g_i^{-m_i}[x_i,y_i,z_i]^{m_i}$.
    Since $a\in G$, $m_i=0$ for all $i\leq n$. 
    This means $a=1$.
\end{proof}

\begin{lemma}\label{lemma:number of generators for triple commutator roots}
    Let $C\leq G\in\mathcal V_3$ and let $C$ be generated by at most $n$ elements.
    Suppose that $C\cap \gamma_2(G)=\gamma_2(C)$.
    Then there are $G\leq H$ and $C\leq D\leq H$ such that
    \begin{enumerate}
        \item $D$ is generated by $C$ and at most $3n(n-1)/2$ elements in $H$,
        \item $D\leq_\LCS H$.
    \end{enumerate}
\end{lemma}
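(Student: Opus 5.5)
The plan mirrors the proof of Lemma~\ref{lemma:commutator root}, now one level down, using Lemma~\ref{lemma:triple commutator root} in place of Lemma~\ref{lemma:basic commutator root}.

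\textbf{Choosing the elements to be rooted.} Let $C=\langle c_1,\dots,c_n\rangle$. Since $C\cap\gamma_2(G)=\gamma_2(C)$, the only possible failure of $C\leq_\LCS G$ is in the quotient
\[
(C\cap\gamma_3(G))/\gamma_3(C)\cong \gr_2^G(C)\cap\gr_3\text{-kernel},
\]
that is, in elements of $\gamma_2(C)$ lying in $\gamma_3(G)$ but not in $\gamma_3(C)$. By Fact~\ref{fact:elementary equations}, $\gamma_2(C)$ is generated modulo $\gamma_3(C)$ by the commutators $[c_j,c_k]$ with $j<k$. Hence
\[
\dim\gr_2(C)\leq \binom n2 .
\]
Choose $g_1,\dots,g_r\in C\cap\gamma_3(G)$ whose images form a basis of $(C\cap\gamma_3(G))/\gamma_3(C)$. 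This quotient embeds into $\gr_2(C)$, so $r\leq n(n-1)/2$. Each $g_i$ lies in $\gamma_3(G)\leq Z(G)$, so Lemma~\ref{lemma:triple commutator root} applies. Set
\[
H=(G\times F_{3r})/\llangle g_i^{-1}[x_i,y_i,z_i]\rrangle ,
\]
so that $G\leq H$. Put $D=\langle C,x_i,y_i,z_i\mid i\leq r\rangle_H$. This adds at most $3r\leq 3n(n-1)/2$ generators, which gives (1).

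\textbf{Verifying $D\leq_\LCS H$.} Here it is convenient that $H$ is a quotient of a direct product. Writing $\bar F$ for the image of $F_{3r}$, we have $H=G\bar F$ with $[G,\bar F]=1$. It follows that
\[
\gamma_k(H)=\gamma_k(G)\gamma_k(\bar F),\qquad D=C\bar F .
\]
The identification only glues $\gamma_3(F_{3r})$ to $\langle g_i\rangle\leq\gamma_3(G)$, so the map $G\times F_{3r}\to H$ is injective on $\gr_1$ and $\gr_2$. In particular $\gr_1(H)=\gr_1(G)\oplus\gr_1(F_{3r})$.

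For the $\gamma_2$ condition, take $a=c f\in D\cap\gamma_2(H)$ with $c\in C$ and $f\in\bar F$. Projecting to $\gr_1$ gives $c\in\gamma_2(G)$ and $f\in\gamma_2(\bar F)$. Then $c\in C\cap\gamma_2(G)=\gamma_2(C)$, and $\gamma_2(\bar F)\leq\gamma_2(D)$ since $\bar F\leq D$. Hence $a\in\gamma_2(D)$.

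For the $\gamma_3$ condition, take $a=cf\in D\cap\gamma_3(H)$. The preceding step gives $c\in\gamma_2(C)$ and $f\in\gamma_2(\bar F)$. Comparing in $\gr_2(H)=\gr_2(G)\oplus\gr_2(F_{3r})$, which is valid because the relations lie in $\gamma_3$, shows that $c\in\gamma_3(G)$ and $f\in\gamma_3(\bar F)\leq\gamma_3(D)$. So $c\in C\cap\gamma_3(G)$, and by the choice of the $g_i$,
\[
c\in\langle g_i\rangle\gamma_3(C).
\]
In $H$ each $g_i$ equals $[x_i,y_i,z_i]\in\gamma_3(D)$. Therefore $c\in\gamma_3(D)$, and so $a\in\gamma_3(D)$.

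\textbf{Main obstacle.} The delicate point is the claim that $\gr_1$ and $\gr_2$ of $G\times F_{3r}$ survive in $H$. Concretely, one must check that
\[
\gamma_k(H)=\text{image of }\gamma_k(G)\gamma_k(F_{3r})\quad\text{and}\quad L\cap\gamma_2(G\times F_{3r})\leq\gamma_3 ,
\]
where $L=\llangle g_i^{-1}[x_i,y_i,z_i]\rrangle$. Both follow because $L$ is generated by elements of $\gamma_3(G)\times\gamma_3(F_{3r})$, and then Lemma~\ref{lemma:gr of quotient} gives the required isomorphisms in degrees $1$ and $2$. I would verify this first; the remaining steps are then the bookkeeping above.
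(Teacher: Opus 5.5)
Your proposal is correct and follows the paper's proof. You make the same choices as the paper: the $g_i$ span $(C\cap\gamma_3(G))/\gamma_3(C)\leq\gr_2(C)$, the extension is $H=(G\times F_{3r})/L$ from Lemma~\ref{lemma:triple commutator root}, and $D=\langle C,x_i,y_i,z_i\rangle_H$. The only cosmetic difference is how the two strictness conditions are checked: you argue elementwise through injectivity of $\gr_1,\gr_2$ under the quotient by $L\leq\gamma_3(G\times F_{3r})$, whereas the paper computes $\gamma_k(H)\cap D=\bigl((\gamma_k(G)\cap C)\times\gamma_k(F_{3m})\bigr)/L$ directly via the correspondence theorem, using $L\leq C\times F_{3m}$.
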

\begin{proof}
    Let $g_0,\cdots, g_{m-1} \in C\cap \gamma_3(G)$ be such that $\{g_i\gamma_3(C)\mid i< m\}$ generates the $\mathbb F_3$-vector space $(C\cap \gamma_3(G))/\gamma_3(C)$.
    Since $C\cap \gamma_3(G)\leq C\cap\gamma_2(G)=\gamma_2(C)$, we can choose $m\leq \dim\gr_2(C)\leq \binom n2$.
    By Lemma \ref{lemma:triple commutator root}, $H=(G\times F_{3m})/L$ is an extension of $G$  where $L=\langle g_i^{-1}[x_i,y_i,z_i]\mid i< m\rangle$.
    Let $D=\langle C,x_i,y_i,z_i\mid i< m\rangle_H$. Then $D=(C\times F_{3m})/L$.
    \begin{claim}
        $\gamma_2(H)\cap D=\gamma_2(D)$
    \end{claim}
    Since $g_i\in C\cap \gamma_3(G)\leq \gamma_2(C)$, $L\leq \gamma_2(C\times F_{3m})$.
    Also, $\gamma_2(H)=(\gamma_2(G\times F_{3m})L)/L=\gamma_2(G\times F_{3m})/L=(\gamma_2(G)\times \gamma_2(F_{3m}))/L$.
    Hence
    \begin{eqnarray*}
        \gamma_2(H)\cap D 
        &=& \frac{\gamma_2(G)\times \gamma_2(F_{3m})}L \quad \cap \quad   \frac{C\times F_{3m}}L \\ 
        &=& \frac{(\gamma_2(G)\cap C)\times \gamma_2(F_{3m})}L \\ 
        &=& \frac{\gamma_2(C)\times \gamma_2(F_{3m})}L \\
        &=& \frac{\gamma_2(C\times F_{3m})}L \\ 
        &=& \gamma_2(D).
    \end{eqnarray*}
    \begin{claim}
        $\gamma_3(H)\cap D=\gamma_3(D)$
    \end{claim}
    Since $g_i\in \gamma_3(G)\cap C$ and $[x_i,y_i,z_i]\in \gamma_3(F_{3m})$, $L\leq (\gamma_3(G)\cap C)\times \gamma_3(F_{3m})\leq \gamma_3(G\times F_{3m})$.
    Hence,
    \begin{eqnarray*}
        \gamma_3(H) &=& \frac{\gamma_3(G\times F_{3m})}L, \\ 
        \gamma_3(H) \cap D &=& \frac{(\gamma_3(G)\cap C)\times\gamma_3(F_{3m})}L,\\ 
        \gamma_3(D)&=&\frac{(\gamma_3(C)\times \gamma_3(F_{3m}))L}L.
    \end{eqnarray*}
    Suppose $a\in (\gamma_3(G)\cap C)\times \gamma_3(F_{3m})$.
    By the choice of $g_i$ $(i<m)$, $a=h\prod_i g_i^{m_i} f$ for some $h\in \gamma_3(C)$, $m_i\in\mathbb F_3$, and $f\in\gamma_3(F_{3m})$.
    Therefore, $a\in h(\prod_i [x_i,y_i,z_i]^{m_i})f L$.
    This shows that $\gamma_3(H)\cap D\subset \gamma_3(D)$.
\end{proof}

\begin{remark}
    The bound on the number of additional generators in the above lemma is not optimal.
    Indeed, for example, $F_{\mathcal V_3}(x_1,\cdots, x_4)$ contains $\binom 43=4$ elements $[x_i,x_j,x_k]$ $(i<j<k\leq 4)$ that are linearly independent in $\gr_3(F_{4})$.
    Hence, for $g_1,\cdots, g_4$, $L=\langle g_1^{-1}[x_1,x_2,x_3],\cdots, g_4^{-1}[x_2,x_3,x_4]\rangle$ is enough to prove the desired conditions.
    This trick works for Lemma \ref{lemma:commutator root}. 
\end{remark}

\begin{proposition}\label{proposition:structure of e.c. model}
    Let $M$ be an e.c. model of $T_3$. Then the lower central series of $M$
    coincides with its upper central series in reverse order.
    Moreover,
    \[
        \gamma_2(M)=\{[a,b]\mid a,b\in M\},
    \]
    \[
        \gamma_3(M)=\{[a,b,c]\mid a,b,c\in M\}.
    \]
\end{proposition}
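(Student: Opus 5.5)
The strategy is to transfer properties from suitable extensions of $M$ back to $M$ via existential closedness. The extensions used are $M*_{\mathcal V_3}F_2$ (Lemma \ref{lemma:coincidence of central series}) and the root-adjoining constructions of Lemmas \ref{lemma:basic commutator root} and \ref{lemma:triple commutator root}. Throughout, note that $M\neq 1$: $M$ embeds into $M*_{\mathcal V_3}F_2$, which contains $x\neq 1$, and e.c.-ness transfers $\exists x\,(x\neq 1)$.

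\emph{Step 1: commutator and triple-commutator descriptions.} First let $g\in\gamma_2(M)$. By Lemma \ref{lemma:basic commutator root} with $n=1$, $M$ embeds into $H=(M*_{\mathcal V_3}F_2)/\llangle g^{-1}[x,y]\rrangle$, and in $H$ the sentence $\exists u,v\,([u,v]=g)$ holds. This is existential with parameter $g\in M$, so it holds in $M$, and hence $\gamma_2(M)=\{[a,b]\}$. Next let $g\in\gamma_3(M)\leq Z(M)$. Lemma \ref{lemma:triple commutator root} embeds $M$ into $(M\times F_3)/\llangle g^{-1}[x,y,z]\rrangle$, where $g$ is a triple commutator. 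By e.c.-ness, $g=[a,b,c]$ for some $a,b,c\in M$.

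\emph{Step 2: the central series.} One always has $\gamma_i(M)\leq Z_{4-i}(M)$, so only the reverse inclusions $Z_2(M)\leq\gamma_2(M)$ and $Z(M)\leq\gamma_3(M)$ are needed. Let $a\in M\setminus\gamma_2(M)$, and put $H=M*_{\mathcal V_3}F_2$.

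\begin{enumerate}
\item By Lemma \ref{lemma:coincidence of central series}, $M\leq_{\LCS}H$, so $a\notin\gamma_2(H)=Z_2(H)$.
\item Hence there are $b,c\in H$ with $[a,b,c]\neq 1$.
\item The formula $\exists u,w\,([a,u,w]\neq 1)$ is existential with parameter $a$, so it holds in $M$, and therefore $a\notin Z_2(M)$.
\end{enumerate}

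The same argument applied to $a\in M\setminus\gamma_3(M)$ gives $a\notin\gamma_3(H)=Z(H)$, using $M\cap\gamma_3(H)=\gamma_3(M)$. So some $b\in H$ has $[a,b]\neq1$, and e.c.-ness transfers this to $M$, giving $a\notin Z(M)$. Finally, $\gamma_1(M)=M=Z_3(M)$ holds because $M$ has class at most $3$.

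\emph{Main obstacle.} The difficult work is already contained in the cited lemmas. The step needing most care is the use of the $\LCS$-strictness of $M$ in $M*_{\mathcal V_3}F_2$: membership in $\gamma_i$ is not preserved downward in general, and strictness is exactly what guarantees that $a\notin\gamma_i(M)$ implies $a\notin\gamma_i(H)$. Beyond that, one only has to check that each transferred statement really is a single existential formula with parameters from $M$, which it is in every case above.
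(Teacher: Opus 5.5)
Your proposal is correct and follows the paper's proof essentially step for step. Root-adjoining extensions from Lemmas~\ref{lemma:basic commutator root} and~\ref{lemma:triple commutator root}, combined with existential closedness, give the commutator descriptions, and the strict embedding $M\leq_{\LCS}M*_{\mathcal V_3}F_2$ of Lemma~\ref{lemma:coincidence of central series} lets you transfer $[a,b,c]\neq 1$ or $[a,b]\neq 1$ back to $M$; your explicit check that $M\neq 1$, which that lemma requires as a hypothesis, is a detail the paper leaves implicit.
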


\begin{proof}
    Let $a\in \gamma_2(M)$. By Lemma \ref{lemma:basic commutator root}, there is an extension $M\leq H$ and $b',c'\in H$ such that $[b',c']=a$.
    Since $M$ is e.c., we can find $b,c\in M$ such that $[b,c]=a$ in $M$.
    This proves $\gamma_2(M)=\{[a,b]\mid a,b\in M\}$.
    A similar argument, using Lemma \ref{lemma:triple commutator root}, proves the corresponding equality for $\gamma_3(M)$.
    
    Let $a\in M\setminus\gamma_2(M)$ and let $H=M*_{\mathcal V_3}F_2$.
    Since $M\leq_\LCS H$, $a\not\in \gamma_2(H)$.
    Hence there are $b',c'\in H$ such that $[a,b',c']\neq 1$. 
    Since $M$ is an e.c. model, we can find $b,c\in M$ such that $[a,b,c]\neq 1$, which proves $\gamma_2(M)=Z_2(M)$.
    A similar argument proves $\gamma_3(M)=Z(M)$.
\end{proof}

\begin{proposition}\label{proposition:bdd LCS}
    Let $M$ be an e.c. model of $T_3$ and let $C\leq M$ be a finite
    subgroup.
    Then there is $C\leq D\leq M$ such that 
    \begin{enumerate}
        \item $D$ is generated by at most $15n^2$ elements where $C$ is generated by at most $n$ elements,
        \item the lower central series of $D$ coincides with its upper central series in reverse order. In particular, $D\leq_\LCS M$. 
    \end{enumerate}
\end{proposition}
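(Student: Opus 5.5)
The plan is to build the required group abstractly in an extension of $M$, make its defining property intrinsic (so that it survives transfer), and then pull a copy back into $M$ using existential closedness. The key observation is that the conclusion ``the lower central series of $D$ coincides with its upper central series in reverse order'' is a property of the finite group $D$ alone. Since $D$ is finite, it is expressed by the quantifier-free diagram of $D$. So it is enough to find some extension $M\leq H'\in\mathcal V_3$ and a subgroup $C\leq D'\leq H'$ with this intrinsic property and boundedly many generators. Existential closedness of $M$ then applied to the finite diagram $\Diag_{D'}$ over the generators of $C$ produces $C\leq D\leq M$ with $D\cong_C D'$. The ``in particular'' clause follows from the lemma stating that if the lower central series of $G$ coincides with its upper central series in reverse order, then $G\leq_\LCS H$ for every $H\geq G$.

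To construct $D'$, I will first produce an $\LCS$-strict subgroup in an extension, using the two root lemmas.

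Step 1: apply Lemma \ref{lemma:commutator root} to $C\leq M$. This gives $M\leq H_1$ and $C\leq D_1\leq H_1$ with at most $3n$ generators and $D_1\cap\gamma_2(H_1)=\gamma_2(D_1)$.

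Step 2: apply Lemma \ref{lemma:number of generators for triple commutator roots} to $D_1\leq H_1$. This gives $H_1\leq H_2$ and $D_1\leq D_2\leq_\LCS H_2$. The group $D_2$ is generated by at most $3n+\tfrac{3\cdot 3n(3n-1)}{2}$ elements.

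Step 3: set $H'=H_2*_{\mathcal V_3}F_2$ and $D'=\langle D_2,x,y\rangle\leq H'$. Because $D_2\leq_\LCS H_2$, Lemma \ref{lemma:free-product-amalgam} shows that $D_2*_{\mathcal V_3}F_2\to H_2*_{\mathcal V_3}F_2$ is injective, so $D'\cong D_2*_{\mathcal V_3}F_2$. By Lemma \ref{lemma:coincidence of central series}, the lower central series of $D'$ coincides with its upper central series in reverse order. If $D_2=1$, then $C=1$, and we may take $D=C$ directly. Also $M\leq H_2\leq H'$, so $H'$ is an extension of $M$ containing $D'\supseteq C$.

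The generator count is then
\[
3n+\tfrac{27n^2-9n}{2}+2=\tfrac{27}{2}n^2-\tfrac32 n+2 .
\]
This is at most $15n^2$ for all $n\geq1$: at $n=1$ it equals $14$, and for $n\geq2$ we have $\tfrac32 n^2\geq 2$. For $n=0$ the group $C$ is trivial, and the claim is trivial.

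I expect the main obstacle to be Step 3's transfer, not the counting. One might try to realize $D_2$ itself inside $M$, but $\LCS$-strictness refers to the ambient $\gamma_i(M)$. Those conditions are negative, non-membership statements about $M$, and they need not be preserved when an isomorphic copy is found in $M$. Passing to $D_2*_{\mathcal V_3}F_2$ avoids this: it replaces an ambient condition by an intrinsic one. This works only because $D_2*_{\mathcal V_3}F_2$ embeds into an extension of $M$, which is exactly what the strictness $D_2\leq_\LCS H_2$ from Step 2 buys via Lemma \ref{lemma:free-product-amalgam}. The point to check carefully is that the embedding fixes $C$ pointwise. This holds because all maps are the natural ones extending the inclusions $C\leq D_1\leq D_2$.
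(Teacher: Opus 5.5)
Your proposal is correct and follows the paper's proof essentially step for step. You use commutator roots, then triple-commutator roots to get $D_2\leq_\LCS H_2$, then pass to $D_2*_{\mathcal V_3}F_2\leq H_2*_{\mathcal V_3}F_2$ via Lemmas \ref{lemma:free-product-amalgam} and \ref{lemma:coincidence of central series}, and finally pull a copy over $C$ into $M$ by existential closedness. Your explicit check that $\tfrac{27}{2}n^2-\tfrac32 n+2\leq 15n^2$, and your explanation of why the intrinsic central-series property (rather than strictness of $D_2$ itself) is what transfers into $M$, correctly fill in points the paper leaves implicit.
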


\begin{proof}
    If $C=1$, take $D=1$. Hence we may assume that $C\neq 1$.
    By Lemma \ref{lemma:commutator root}, there is an
    extension $M\leq H_1$, $C\leq D_1\leq H_1$ such that 
    $D_1\cap\gamma_2(H_1)=\gamma_2(D_1)$.
    The group $D_1$ is generated by $C$ and at most $2n$ elements.  
    Hence $D_1$ is generated by at most $3n$ elements.
    Next we apply Lemma
    \ref{lemma:number of generators for triple commutator roots} and get an extension
    $H_1\leq H_2$ and $D_1\leq D_2\leq_\LCS H_2$.
    $D_2$ is generated by $D_1$ and at most $3(3n)(3n-1)/2$ elements.
    Hence $D_2$ is generated by at most $3n+9n(3n-1)/2$ elements.

    Let $F_2=F_{\mathcal V_3}(\{x,y\})$ and set
    \[
    D_3=D_2*_{\mathcal V_3}F_2,
    \qquad
    H=H_2*_{\mathcal V_3}F_2.
    \]
    Since $1\neq D_2\leq_{\LCS}H_2$, Lemma
    \ref{lemma:free-product-amalgam} implies that $D_3\leq H$.
    By Lemma \ref{lemma:coincidence of central series}, the lower central
    series of $D_3$ coincides with its upper central series in reverse order.
    $D_3$ is generated by at most
    \[
    3n+\frac{9n(3n-1)}2+2\leq 15n^2 
    \]
    elements.
    Since $M$ is an e.c. model and $D_3$ is finite, we can find $C\leq D\leq M$ such that $D\cong_C D_3$. 
    Since $D$ is a copy of $D_3$, the lower central series of $D$ coincides with its upper central series in reverse order.
\end{proof}

\section{Examples}
\label{sec:examples}
In this section, we give examples showing that lower-central strictness and
existential closedness cannot be omitted from the relevant results
in Sections \ref{sec:main-results} and \ref{sec:structure}.

We start with an easy example.
\begin{example}
    Let $G=F_{\mathcal V_3}(x,y,z)=B(3,3)$.
    Then by Lemma \ref{lemma:coincidence of central series} (or by a direct calculation), $\gamma_2(G)=Z_2(G)$ and $\gamma_3(G)=Z(G)$.
    
    Let $a=[x,y]\in \gamma_2(G)$ and $A=\langle a,z\rangle \leq G$.
    Then, by Fact \ref{fact:Levi and van der Waerden}, $A\cong B(2,3)$ and $a\not\in \gamma_2(A)=\{[a,z]^k\mid k\in \mathbb F_3\}$. 
    Let $F_2=F_{\mathcal V_3}(v,w)$. 
    Then
    \begin{enumerate}
        \item $A\not\leq_\LCS G$,
        \item The groups $A*_{\mathcal V_3}F_2$ and $G$ do not have an amalgam over $A$ in $T_3$, because $[a,v,w]\neq 1$ in $A*_{\mathcal V_3}F_2$ but $[a,b,c]=[[x,y],b,c]=1$ for any $b,c\in H\geq G$,
        \item $A*_{\mathcal V_3} F_2 \to G*_{\mathcal V_3}F_2$ is not an embedding, because $[a,v,w]\mapsto 1$.
    \end{enumerate}
\end{example}

\begin{remark}
The example shows that lower-central strictness cannot be omitted
from Lemma \ref{lemma:free-product-amalgam}.
In the proof of Theorem \ref{thm:main}, Proposition A supplies a
subgroup for which this injectivity holds, allowing the witness to
failure of amalgamation to be transferred to a finite coproduct.    
\end{remark}

We next show that the uniform bounds in Theorem \ref{thm:main}
and Proposition \ref{proposition:bdd LCS} fail without
existential closedness.

\begin{lemma}\label{lemma:D can be large}
    Let $F_\omega=F_{\mathcal V_3}(x_i\mid i\in\mathbb N)\in \mathcal V_3$.
    Let $n\geq 1$, $D\leq F_\omega$, and let $a_n:=\prod_{i<n} [x_{2i},x_{2i+1}] \in D\leq F_\omega$.
    If $a_n\in \gamma_2(D)$, then $d(D)\geq 2n$.
\end{lemma}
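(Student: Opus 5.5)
Suppose $a_n\in\gamma_2(D)$; we will show $d(D)\ge 2n$ by a rank argument in $\gr_2(F_\omega)=\Lambda^2V$, where $V$ is the $\mathbb F_3$-space with basis $\{x_i\}$.

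Let $D$ be generated by $d_1,\dots,d_r$ with $r=d(D)$. (We may take $D$ finitely generated: if $D$ is not finitely generated there is nothing to prove, and in any case $a_n$ lies in $\gamma_2$ of a finitely generated subgroup of $D$.) Set $v_j=\gr_1(d_j)\in V$; this vector may be zero if $d_j\in\gamma_2(F_\omega)$. Let $W=\mathrm{span}\{v_1,\dots,v_r\}$, so $\dim W\le r$.

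The key step is to show $\gr_2(a_n)\in\Lambda^2W$. Since $\gamma_2(D)$ is generated by commutators $[u,w]$ with $u,w\in D$, Fact~\ref{fact:elementary equations} lets us expand these modulo $\gamma_3(F_\omega)$. Every element of $\gamma_2(D)$ is therefore congruent modulo $\gamma_3(F_\omega)$ to a product of powers of $[d_j,d_k]$ and of commutators lying in $\gamma_3(F_\omega)$. Here we use $[ab,c]\equiv[a,c][b,c]$ modulo $\gamma_3$, the fact that $\gamma_2$ is abelian, and the vanishing modulo $\gamma_3$ of commutators involving an element of $\gamma_2$. Consequently $\gr_2$ of any element of $\gamma_2(D)$ lies in $\mathrm{span}\{v_j\wedge v_k\}=\Lambda^2W\subset\Lambda^2V$, the last identification being Proposition~\ref{proposition:gr(F) is Grassmann algebra}. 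Moreover, $a_n\in\gamma_2(F_\omega)$ and
\[
\gr_2(a_n)=\omega:=\sum_{i<n}x_{2i}\wedge x_{2i+1}.
\]

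It then remains to prove the linear-algebra fact that $\omega\in\Lambda^2W$ forces $\dim W\ge 2n$. The plan is to view $\omega$ as an alternating bilinear form on $V^*$. Its rank, meaning the dimension of the image of the contraction map $V^*\to V$ given by $f\mapsto \iota_f\omega$, equals $2n$, since $\iota_{x_{2i}^*}\omega=x_{2i+1}$ and $\iota_{x_{2i+1}^*}\omega=-x_{2i}$. On the other hand, if $\omega\in\Lambda^2W$ then every contraction $\iota_f\omega$ lies in $W$, so the image is contained in $W$. Hence $2n\le\dim W\le r=d(D)$.

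The main obstacle is the bookkeeping in the first step. We must make sure that generators $d_j$ lying in $\gamma_2(F_\omega)$ contribute nothing to $\gr_2$, and that the passage from $\gamma_2(D)$ to $\Lambda^2W$ is rigorous; the cleanest route is to note that $\gr_2$ restricted to $\gamma_2(D)$ is a homomorphism. The rank argument itself is routine.
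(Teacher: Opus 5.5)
Your proposal is correct and follows essentially the same route as the paper. Your $W$ is exactly the paper's $U=\gr_1^{F_\omega}(D)$, the containment $\gr_2(\gamma_2(D))\subseteq\Lambda^2W$ is justified in the same way, and your rank argument via the contractions $\iota_{x_{2i}^*}\omega=x_{2i+1}$, $\iota_{x_{2i+1}^*}\omega=-x_{2i}$ is the paper's argument verbatim (note that $\gr_2(F_\omega)\cong\Lambda^2V$ in infinite rank uses the remark following Proposition~\ref{proposition:gr(F) is Grassmann algebra}, which is harmless).
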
   
\begin{proof}
    Put $V=\gr_1(F_\omega)=\bigoplus_i \mathbb F_3x_i$ and $U=\gr_1^{F_\omega}(D)\leq V$.
    The image of $\gamma_2(D)$ in
    $\gr_2(F_\omega)=\Lambda^2V$
    is contained in $\Lambda^2U$, since $\gamma_2(D)$ is generated
    by commutators of elements of $D$.
    Hence $a_n\in\gamma_2(D)$ implies
    \[
        \alpha_n:=\gr_2(a_n)
        =\sum_{i=0}^{n-1}x_{2i}\wedge x_{2i+1}
        \in\Lambda^2U.
    \]
    Let $V^*$ be the vector space of all linear maps from $V$ to $\mathbb F_3$.
    For $\lambda\in V^*$, define the contraction
    $\iota_\lambda:\Lambda^2V\to V$ by
    \[
        \iota_\lambda(u\wedge v)=\lambda(u)v-\lambda(v)u.
    \]
    This is well-defined and linear, and
    $\iota_\lambda(\Lambda^2U)\subseteq U$.
    Let $x_j^*\in V^*$ be the coordinate functional defined by
    $x_j^*(x_i)=\delta_{ij}$. For $0\leq k<n$, we obtain
    \[
        x_{2k+1}=\iota_{x_{2k}^*}(\alpha_n)\in U,
        \qquad
        -x_{2k}=\iota_{x_{2k+1}^*}(\alpha_n)\in U.
    \]
    Hence $\dim U\geq2n$.
    Since the images in $U$ of any generating set of $D$ span $U$,
    we have $d(D)\geq \dim U\geq 2n$.
\end{proof}

\begin{lemma}\label{lemma:amalgam over the cyclic group}
    Let $1\neq a\in G\in\mathcal V_3$ and $A=\langle a \rangle_G$.
    Let $F_3=F_{\mathcal V_3}(s,x,y)$ and identify $A$ with $\langle s\rangle$ via $a\mapsto s$.
    Then $G$ and $F_3$ have an amalgam over $A$ if and only if $a\not\in \gamma_2(G)$.
\end{lemma}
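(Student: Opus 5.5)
The plan is to prove the two directions separately, working throughout in the coproduct $H=G*_{\mathcal V_3}F_3$ modulo $\delta=as^{-1}$. An amalgam of $G$ and $F_3$ over $A$ exists if and only if both natural maps $G\to H/\llangle\delta\rrangle_H$ and $F_3\to H/\llangle\delta\rrangle_H$ are injective.

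\emph{Necessity.} Suppose $a\in\gamma_2(G)$ and that an amalgam $P\in\mathcal V_3$ exists, containing $G$ and $F_3$ with $a=s$. Then $s\in\gamma_2(P)$, so $[s,x,y]\in\gamma_4(P)=1$ by Fact~\ref{fact:elementary equations}. But $[s,x,y]\neq1$ in $F_3$: by Fact~\ref{fact:Levi and van der Waerden} it is a basis element of $\gamma_3(F_3)$. This contradicts injectivity of $F_3\to P$.

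\emph{Sufficiency.} Suppose $a\notin\gamma_2(G)$. Rather than analyse $\llangle\delta\rrangle_H$ directly, I would build the amalgam explicitly. Write $F_3=\langle s\rangle*_{\mathcal V_3}F_{\mathcal V_3}(x,y)$ by the Fact on disjoint generating sets. Then $G*_{\mathcal V_3}F_{\mathcal V_3}(x,y)$ contains $G$, and it contains the subgroup $\langle a,x,y\rangle$. The claim is that $\langle a,x,y\rangle\cong B(3,3)$ via $s\mapsto a$. Granting this, $P=G*_{\mathcal V_3}F_{\mathcal V_3}(x,y)$ is an amalgam: $G$ embeds by the coproduct Lemma, $F_3$ embeds via $s\mapsto a$, $x\mapsto x$, $y\mapsto y$, and the two embeddings agree on $A$.

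To prove the claim I would use $\gr$. By Proposition~\ref{proposition:gr(f) and LCS}(1), it suffices that the induced map $\gr(F_3)\to\gr(P)$ is injective. Put $\alpha=\gr_1(a)\neq0$, using $a\notin\gamma_2(G)$. By Proposition~\ref{proposition:gr of free product} and Proposition~\ref{proposition:gr(F) is Grassmann algebra}, the images of the basis elements are as follows.
\begin{itemize}
\item In degree $1$: $\alpha$, $x$, $y$, which lie in independent blocks.
\item In degree $2$: $\alpha\otimes x$, $\alpha\otimes y$, $x\wedge y$, which lie in the blocks $\gr_1(G)\otimes\gr_1(F_2)$ and $\gr_2(F_2)$.
\item In degree $3$: the single element $\pm\alpha\otimes(x\wedge y)\in\gr_1(G)\otimes\gr_2(F_2)$.
\end{itemize}
Each of these families is linearly independent because $\alpha\neq0$. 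The map is a Lie homomorphism and carries a basis of each graded piece to an independent set, so it is injective. Hence $F_3\to P$ is injective.

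The main obstacle is the bookkeeping of brackets inside the block decomposition, in particular checking that $[\alpha,x]$ genuinely lands as $\alpha\otimes x$ and that the degree-$3$ image is nonzero. Both follow from part (2) of Proposition~\ref{proposition:gr of free product} and the definition of $\eta$.
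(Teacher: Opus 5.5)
Your proposal is correct. Necessity is argued exactly as in the paper; for sufficiency you take a different route.

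The paper never forms a coproduct. Since $\gr_1(a)\neq 0$, a linear functional $\lambda$ on $\gr_1(G)$ with $\lambda(\gr_1(a))=1$ gives a retraction $r_G\colon G\to A$, $g\mapsto a^{\lambda(\gr_1(g))}$. The map $F_3\to\langle s\rangle$ killing $x,y$ is a second retraction. Then $g\mapsto(g,r_G(g))$ and $b\mapsto(r_{F_3}(b),b)$ embed $G$ and $F_3$ compatibly over $A$ into the direct product $G\times F_3$. That argument is completely elementary: a common retract always amalgamates in the direct product, and no graded computation is needed.

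You instead realize $F_3$ as $\langle a,x,y\rangle\leq G*_{\mathcal V_3}F_{\mathcal V_3}(x,y)$. This costs the block bookkeeping, but it matches the paper's main mechanism, and you could simply have cited it. The condition $a\notin\gamma_2(G)$ says precisely that $\langle a\rangle\cap\gamma_2(G)=1$, that is, $\langle a\rangle\leq_{\LCS}G$. Also $\langle a\rangle*_{\mathcal V_3}F_{\mathcal V_3}(x,y)\cong F_3$. So Lemma~\ref{lemma:free-product-amalgam} gives the embedding directly. Read this way, the statement says an amalgam exists if and only if $A\leq_{\LCS}G$.

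One small correction to your bookkeeping. Part (2) of Proposition~\ref{proposition:gr of free product} only places the degree-$3$ image in the block $\gr_1(G)\otimes\gr_2(F_{\mathcal V_3}(x,y))$; it does not show the image is nonzero. For that you need the cyclic identity $[a,b,c]=[b,c,a]$ in $\gr$, which gives $[[\alpha,x],y]=[[x,y],\alpha]=-[\alpha,[x,y]]=-\eta_3(\alpha\otimes(x\wedge y))\neq 0$.
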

\begin{proof}
    Suppose that $a\in \gamma_2(G)$. Then for any $H\in \mathcal V_3$ containing $G$, $[a,b,c]=1$ for every $b,c\in H$.
    On the other hand, $[a,x,y]\neq 1$ in $F_3$. This means $G$ and $F_3$ do not have an amalgam $H$ over $A$.

    Conversely, suppose that $a\not\in \gamma_2(G)$.
    There is a linear map
    \[ 
        \lambda:\gr_1(G)\to\mathbb F_3
    \]
    sending $\gr_1(a)$ to $1$.
    Hence
    $$ r_G(g):=a^{\lambda(\gr_1(g))} $$
    defines a retraction $r_G:G\to A$.
    There is also a retraction $r_{F_3}:F_3\to A$ $(x,y\mapsto 1$ and $s\mapsto a)$.
    Hence $G\times F_3$ is an amalgam of $G$ and $F_3$ over $A$ through $G\ni g\mapsto (g,r_G(g))\in G\times F_3$ and $F_3\ni b\mapsto (r_{F_3}(b), b) \in G\times F_3$.
\end{proof}

Let $F_\omega=F_{\mathcal V_3}(\{x_i\mid i\in\mathbb N\})\in \mathcal V_3$, $n\geq 1$, $a_n=\prod_{i<n}[x_{2i},x_{2i+1}]$ and let $A_n=\langle a_n\rangle \leq F_\omega$.
(Clearly, $A_n\cong A_m$ for every $n,m>0$ because $A_n$ is non-trivial and $a_n^3=1$.)
Let $F_3=F_{\mathcal V_3}(s,y,z)\in\mathcal V_3$ and identify $A_n$ with $\langle s\rangle\leq F_3$ via $a_n\mapsto s$. Then $F_3$ and $F_\omega$ do not have an amalgam over $A_n$ by the previous lemma.

\begin{proposition}
    \begin{enumerate}
        \item If $A_n\leq D\leq_\LCS F_\omega$ then $d(D)\geq 2n$.
        \item 
        For any $A_n\leq D\leq F_\omega$, if $D$ and $F_3$ do not have an amalgam in $\mathcal V_3$ over $A_n$, then $d(D)\geq 2n$. 
    \end{enumerate}
\end{proposition}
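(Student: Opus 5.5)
Both parts reduce to Lemma~\ref{lemma:D can be large}: it suffices to show that, under either hypothesis, $a_n\in\gamma_2(D)$. The lemma then gives $d(D)\geq 2n$ at once.

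For (1), note that $a_n=\prod_{i<n}[x_{2i},x_{2i+1}]$ is a product of commutators, so $a_n\in\gamma_2(F_\omega)$. Since $a_n\in A_n\leq D$, we have $a_n\in\gamma_2(F_\omega)\cap D$. The first condition of $D\leq_\LCS F_\omega$ says this intersection equals $\gamma_2(D)$, so $a_n\in\gamma_2(D)$ and Lemma~\ref{lemma:D can be large} applies.

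For (2), I use the contrapositive of the ``if'' direction of Lemma~\ref{lemma:amalgam over the cyclic group}, applied with $G=D$ and $a=a_n$. That lemma requires $a_n\neq1$. This holds because $\gr_2(a_n)=\sum_{i<n}x_{2i}\wedge x_{2i+1}\neq0$ in $\Lambda^2V$ by Proposition~\ref{proposition:gr(F) is Grassmann algebra} and the remark following it for infinitely many generators. The identification of $A_n=\langle a_n\rangle_D$ with $\langle s\rangle$ via $a_n\mapsto s$ is the one fixed in the statement. The lemma then says that $D$ and $F_3$ have an amalgam over $A_n$ if and only if $a_n\notin\gamma_2(D)$. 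So if $D$ and $F_3$ have no amalgam, then $a_n\in\gamma_2(D)$, and Lemma~\ref{lemma:D can be large} gives $d(D)\geq 2n$. The lemma names the generators of $F_3$ as $s,x,y$ while the setup here uses $s,y,z$; this is only a renaming and does not affect the argument.

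I do not expect a real obstacle. The substantive work is already in Lemma~\ref{lemma:D can be large}, through the contraction argument. The only points needing care are checking $a_n\neq1$ and noting that $D$ need not be finitely generated a priori. If $D$ is not finitely generated, then $d(D)\geq 2n$ holds trivially, and the lemma's proof covers that case anyway since it bounds $\dim U$ rather than assuming finiteness.
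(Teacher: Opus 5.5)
Your proposal is correct and follows the paper's proof: in both parts you show $a_n\in\gamma_2(D)$ and then apply Lemma~\ref{lemma:D can be large}. In (1) this comes from the first condition of $D\leq_\LCS F_\omega$, and in (2) from the contrapositive of Lemma~\ref{lemma:amalgam over the cyclic group}. Your extra checks ($a_n\neq 1$, the renaming of the generators of $F_3$, and the non-finitely-generated case) are sound details that the paper leaves implicit.
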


\begin{proof}
    (1): Since $a_n\in \gamma_2(F_\omega)$, if $D\leq_\LCS F_\omega$, then $a_n\in \gamma_2(D)$.
    Hence, by Lemma \ref{lemma:D can be large}, $d(D)\geq 2n$.

    (2): Since $F_3$ and $D$ do not have an amalgam over $A_n$, by Lemma \ref{lemma:amalgam over the cyclic group}, $a_n\in \gamma_2(D)$.
    Hence, by Lemma \ref{lemma:D can be large}, $d(D)\geq 2n$.
\end{proof}
Since $d(A_n)=1$ and $d(F_3)=3$ for every $n\geq1$, while the
required numbers of generators are at least $2n$, neither the
finite-envelope bound in Proposition~\ref{proposition:bdd LCS} nor
the bounded-obstruction conclusion of Theorem~\ref{thm:main} extends
to arbitrary models of $T_3$.




\begin{thebibliography}{dEMRS2025}

\bibitem[Adi1979]{Adian1979}
S.~I.~Adian,
\emph{The Burnside Problem and Identities in Groups},
translated from the Russian by J.~Lennox and J.~Wiegold,
Ergebnisse der Mathematik und ihrer Grenzgebiete, vol.~95,
Springer-Verlag, Berlin--New York, 1979.

\bibitem[CK1990]{ChangKeisler1990}
C.~C.~Chang and H.~J.~Keisler,
\emph{Model Theory},
third edition,
Studies in Logic and the Foundations of Mathematics, vol.~73,
North-Holland, Amsterdam, 1990.

\bibitem[dEMRS2025]{dElbeeMuellerRamseySiniora2025}
C.~d'Elb\'ee, I.~M\"uller, N.~Ramsey, and D.~Siniora,
\emph{Model-theoretic properties of nilpotent groups and Lie algebras},
J. Algebra \textbf{662} (2025), 640--701.
\href{https://doi.org/10.1016/j.jalgebra.2024.08.012}
     {doi:10.1016/j.jalgebra.2024.08.012}.

\bibitem[Ekl1972]{Eklof1972}
P.~C.~Eklof,
\emph{Some model theory of abelian groups},
J. Symbolic Logic \textbf{37} (1972), no.~2, 335--342.
\href{https://doi.org/10.2307/2272976}
     {doi:10.2307/2272976}.


\bibitem[ES1971]{EklofSabbagh1971}
P.~C.~Eklof and G.~Sabbagh,
\emph{Model-completions and modules},
Ann. Math. Logic \textbf{2} (1971), no.~3, 251--295.
\href{https://doi.org/10.1016/0003-4843(71)90016-7}
     {doi:10.1016/0003-4843(71)90016-7}.

\bibitem[FK2025]{FracekKowalski2025}
M.~Fr\k{a}cek and P.~Kowalski,
\emph{Some model theory of the Heisenberg group},
preprint (2025).
\href{https://arxiv.org/abs/2512.09414}{arXiv:2512.09414}.

\bibitem[HKTY2023]{HoffmannKowalskiTranYe2023}
D.~M.~Hoffmann, P.~Kowalski, C.-M.~Tran, and J.~Ye,
\emph{Of model completeness and algebraic groups},
preprint (2023).
\href{https://arxiv.org/abs/2312.08988}{arXiv:2312.08988}.


\bibitem[LW1933]{LeviVanDerWaerden1933}
F.~Levi and B.~L.~van der Waerden,
\emph{\"Uber eine besondere Klasse von Gruppen},
Abh. Math. Sem. Univ. Hamburg \textbf{9} (1933), 154--158.
\href{https://doi.org/10.1007/BF02940639}
     {doi:10.1007/BF02940639}.

\bibitem[Mai1989]{Maier1989}
B.~J.~Maier,
\emph{On nilpotent groups of exponent $p$},
J. Algebra \textbf{127} (1989), no.~2, 279--289.
\href{https://doi.org/10.1016/0021-8693(89)90253-6}
     {doi:10.1016/0021-8693(89)90253-6}.

\bibitem[NA1968]{NovikovAdian1968}
P.~S.~Novikov and S.~I.~Adian,
\emph{Infinite periodic groups. I--III},
Math. USSR-Izv. \textbf{2} (1968),
no.~1, 209--236;
no.~2, 241--479;
no.~3, 665--685.
\href{https://doi.org/10.1070/IM1968v002n01ABEH000637}
     {doi:10.1070/IM1968v002n01ABEH000637};
\href{https://doi.org/10.1070/IM1968v002n02ABEH000640}
     {doi:10.1070/IM1968v002n02ABEH000640};
\href{https://doi.org/10.1070/IM1968v002n03ABEH000653}
     {doi:10.1070/IM1968v002n03ABEH000653}.

\bibitem[Sar1974]{Saracino1974}
D.~Saracino,
\emph{Wreath products and existentially complete solvable groups},
Trans. Amer. Math. Soc. \textbf{197} (1974), 327--339.
\href{https://doi.org/10.1090/S0002-9947-1974-0342391-5}
     {doi:10.1090/S0002-9947-1974-0342391-5}.

\bibitem[Sar1976]{Saracino1976}
D.~Saracino,
\emph{Existentially complete nilpotent groups},
Israel J. Math. \textbf{25} (1976), 241--248.
\href{https://doi.org/10.1007/BF02757003}
     {doi:10.1007/BF02757003}.

\bibitem[SW1979]{SaracinoWood1979}
D.~Saracino and C.~Wood,
\emph{Periodic existentially closed nilpotent groups},
J. Algebra \textbf{58} (1979), no.~1, 189--207.
\href{https://doi.org/10.1016/0021-8693(79)90199-6}
     {doi:10.1016/0021-8693(79)90199-6}.

\bibitem[Tak2022]{Takeuchi2022}
K.~Takeuchi,
\emph{On model companions of some classes of groups},
RIMS K\^oky\^uroku \textbf{2218} (2022), 79--84.



\end{thebibliography}


\end{document}